\documentclass[12pt]{amsart}

\usepackage{amsmath,xypic,xcolor}
\usepackage{amssymb}
\usepackage{hyperref}
\usepackage[margin=1in]{geometry}
\usepackage{setspace}

% ams thm setup
\theoremstyle{plain}
  \newtheorem{lemma}[equation]{Lemma}
  \newtheorem{proposition}[equation]{Proposition}
  \newtheorem{theorem}[equation]{Theorem}
  \newtheorem{corollary}[equation]{Corollary} 
  \newtheorem{conjecture}[equation]{Conjecture}

\theoremstyle{definition}
  \newtheorem{definition}[equation]{Definition}

\theoremstyle{remark}
  \newtheorem{remark}[equation]{Remark}
  \newtheorem{example}[equation]{Example}

\makeatletter
\@addtoreset{equation}{section}
\makeatother

% Operators
\DeclareMathOperator{\Aut}{Aut}
\DeclareMathOperator{\Alb}{Alb}
\DeclareMathOperator{\Amp}{Amp}
\DeclareMathOperator{\Bs}{Bs}
\DeclareMathOperator{\Hilb}{Hilb} 
\DeclareMathOperator{\Nef}{Nef}
\DeclareMathOperator{\NEb}{\overline{NE}}
\DeclareMathOperator{\Pic}{Pic}
\DeclareMathOperator{\rank}{rank}
\DeclareMathOperator{\Vol}{Vol}

% Alphabetical
\newcommand{\BB}{\mathbf B}
\newcommand{\CC}{\mathbb C}
\newcommand{\E}{\mathcal E}
\newcommand{\F}{\mathcal F}
\renewcommand{\L}{\mathcal L}
\renewcommand{\O}{\mathcal O}
\newcommand{\PP}{\mathbb P}
\newcommand{\QQ}{\mathbb Q}
\newcommand{\QQbar}{\overline{\mathbb Q}}
\newcommand{\RR}{\mathbb R}
\newcommand{\ZZ}{\mathbb{Z}}

% Sets etc.
\newcommand{\abs}[1]{\left\lvert #1 \right\rvert}
\newcommand{\dabs}[1]{\left\lVert #1 \right\rVert}
\newcommand{\set}[1]{\left\{ #1 \right\}}

% Symbols
\renewcommand{\emptyset}{\varnothing}
\newcommand{\id}{\mathrm{id}}
\newcommand{\rat}{\dashrightarrow}
\renewcommand{\setminus}{\smallsetminus}

% Comments
\newif\ifhascomments \hascommentstrue
\ifhascomments
\newcommand{\matt}[1]{{\color{red}[[\ensuremath{\spadesuit\spadesuit\spadesuit} #1]]}}
\else
  \newcommand{\matt}[1]{}
\fi

\title[Hyper-K\"ahler canonical heights and Kawaguchi--Silverman]{Canonical heights on hyper-K\"ahler varieties and the Kawaguchi-Silverman conjecture}

\author{John Lesieutre}
\address{John Lesieutre\\ University of Illinois at Chicago \\ Math, Stat \& CS, Room 411 SEO \\ 851 S Morgan St, M/C 249 \\Chicago, IL 60607-7045 \\ USA}
\email{jdl@uic.edu}
\thanks{J.L. is partially supported by NSF grants DMS-1700898 and DMS-1246844.}
\author{Matthew Satriano}
\address{Matthew Satriano\\ Department of Pure Mathematics\\ University of Waterloo\\ Waterloo, ON N2L 3G1\\ Canada}
\email{msatrian@uwaterloo.ca}
\thanks{M.S. is partially supported by NSERC grant RGPIN-2015-05631.}

\begin{document}

\begin{abstract}
The Kawaguchi--Silverman conjecture predicts that if \(f : X \rat X\) is a dominant rational-self map of a projective variety over \(\QQbar\), and \(P\) is a \(\QQbar\)-point of \(X\) with Zariski-dense orbit, then the dynamical and arithmetic degrees of \(f\) coincide: \(\lambda_1(f) = \alpha_f(P)\).  We prove this conjecture in several higher-dimensional settings, including all endomorphisms of non-uniruled smooth projective threefolds with degree larger than \(1\), and all endomorphisms of hyper-K\"ahler varieties in any dimension.  In the latter case, we construct a canonical height function associated to any automorphism \(f : X \to X\) of a hyper-K\"ahler variety defined over \(\QQbar\).
\end{abstract}

\maketitle
\tableofcontents

\section{Introduction}

Let \(f : X \rat X\) be a dominant rational self-map of a smooth projective variety \(X\) defined over $\QQbar$.  There are two natural degree functions one can associate to the dynamical system $(X,f)$. The first measures the growth rate of the degrees of the iterates \(f^{n}\). It is known as the \emph{first dynamical degree} and is defined as
\[
\lambda_1(f) = \lim_{n \to \infty} \left( (f^n)^\ast H \cdot H^{\dim X-1} \right)^{1/n}
\]
where $H$ is a choice of ample divisor on $X$; a result of Dinh and Sibony~\cite{dinhsibony} says that this limit exists and is independent of the choice of ample divisor \(H\). The second notion is the \emph{arithmetic degree}, which depends on a choice of $\QQbar$-point $P$, and reflects the growth rate of the heights the points \(f^n(P)\). Letting $h_H$ denote the logarithmic Weil height associated to $H$, and $h_H^+=\max(h_H,1)$, we define
\[
\underline{\alpha}_f(P) = \liminf_{n \to \infty} h^+_H(f^n(P))^{1/n}, \quad\quad
\overline{\alpha}_f(P) = \limsup_{n \to \infty} h^+_H(f^n(P))^{1/n}.
\]

Both of these quantities are again independent of the choice of ample divisor \(H\) \cite[Proposition 12]{dynamical-arithmetic-rat-maps} and it is conjectured that these two quantities always coincide. When they do, \(\alpha_f(P)\) is defined to be the common value. Whether or not $\underline{\alpha}_f(P)$ and $\overline{\alpha}_f(P)$ are equal remains open in general, but it is known when \(f\) is a morphism~\cite[Theorem 3]{ks-jordan}, which will always be the case in this paper. The Kawaguchi-Silverman conjecture is then:

\begin{conjecture}[Kawaguchi--Silverman \cite{dynamical-arithmetic-rat-maps}]
\label{conj:main} 
Let \(X\) be a smooth projective variety and let \(f : X \rat X\) be a dominant rational map defined over \(\QQbar\). Suppose that \(P\) is a \(\QQbar\)-point of \(X\).  If the forward orbit of \(P\) under \(f\) is Zariski dense, then \(\alpha_f(P)\) exists and is equal to \(\lambda_1(f)\).
\end{conjecture}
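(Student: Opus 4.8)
The plan is to split the desired equality into the two inequalities $\overline\alpha_f(P)\le\lambda_1(f)$ and $\underline\alpha_f(P)\ge\lambda_1(f)$. The first holds for every point with no hypothesis needed: combining submultiplicativity of the degree sequence $(f^n)^\ast H\cdot H^{\dim X-1}$ with functoriality of Weil heights yields $h^+_H(f^n(P))\ll\big((f^n)^\ast H\cdot H^{\dim X-1}\big)h^+_H(P)+C_n$ with subexponentially growing error, so $\overline\alpha_f(P)\le\lambda_1(f)$ always (Kawaguchi--Silverman, Matsuzawa). The entire content is therefore the reverse inequality, and it is here that Zariski density of the orbit must be used.

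For the lower bound I would first reduce from a rational map to a morphism: pass to an $f$-equivariant resolution on which $f$ becomes a morphism, or at least an isomorphism in codimension one, using that $\lambda_1$ is a birational invariant (Dinh--Sibony) and that $\underline\alpha_f(P)$ is unchanged once $P$ and its whole orbit avoid the relevant exceptional loci, which a density argument arranges. Next I would run an $f$-equivariant minimal model program, splitting into the uniruled case --- where one fibres by the maximal rationally connected fibration, inducts on the base, and disposes of the Fano-type fibres using the benign behaviour of heights under morphisms with $\lambda_1=1$ --- and the non-uniruled case, where $K_X$ is pseudoeffective and $f$-invariant, the Albanese and Iitaka fibrations are $f$-equivariant, and one is reduced to abelian varieties, to varieties of general type (where $\Aut$ modulo the canonical polarisation is finite and $\lambda_1=1$), and to Calabi--Yau type fibres, of which hyper-K\"ahler varieties are the model case treated in this paper.

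In each surviving case the decisive step is to produce an $f^\ast$-eigenvector $\eta\in N^1(X)_\RR$ with eigenvalue $\lambda_1(f)$ that is nef and strictly positive along the relevant fibration, and then to build from it a canonical height $\hat h_\eta$ satisfying $\hat h_\eta\circ f=\lambda_1(f)\hat h_\eta$ and $\hat h_\eta=h_\eta+O(1)$. For hyper-K\"ahler varieties the existence of such an $\eta$ follows from Perron--Frobenius theory applied to the $f^\ast$-action on the positive cone of the Beauville--Bogomolov form. Granted $\hat h_\eta$, a point with Zariski-dense orbit cannot lie in the proper closed subset where $\hat h_\eta\le 0$, and $\hat h_\eta(f^n(P))$ cannot remain bounded; hence $\hat h_\eta(f^n(P))$ grows like $\lambda_1(f)^n$ and $\underline\alpha_f(P)\ge\lambda_1(f)$, completing the argument.

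I expect the main obstacle to be the construction of the nef eigenclass in full generality. For an arbitrary dominant rational self-map the pullback $f^\ast$ need not act linearly on $N^1(X)$ compatibly with composition, $\lambda_1(f)$ need not be the spectral radius of $f^\ast$ on any single birational model, and even when it is the corresponding eigenclass can fail to be nef --- it may be merely pseudoeffective, or require a $b$-divisorial (Shokurov--Zariski) completion to be defined. Controlling the stable base locus of $\eta$ --- equivalently, showing that the orbit of a density point escapes it --- is the crux, and the present paper carries this out unconditionally precisely in the geometries, namely hyper-K\"ahler varieties and non-uniruled threefolds, where the positivity of $\eta$ can be read off from the intersection form or from the Enriques--Kodaira classification of the fibres.
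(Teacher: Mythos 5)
The statement you were asked about is Conjecture \ref{conj:main} itself, which is open; the paper does not prove it in general, but only in special cases (hyper-K\"ahler varieties, certain threefolds, Mori fiber spaces, rational normal scrolls). Your text is therefore not a proof but a strategy outline, and the places where you wave your hands are exactly the places where the conjecture remains unresolved. The upper bound $\overline{\alpha}_f(P)\le\lambda_1(f)$ is indeed known unconditionally (Kawaguchi--Silverman, Matsuzawa), and the paper uses it in the same way you do. But the lower bound breaks down at two specific points in your sketch. First, there is no $f$-equivariant MMP in general: for a dominant rational map one cannot even guarantee a birational model on which $(f^n)^\ast=(f^\ast)^n$, and for morphisms the descent of $f$ through contractions is only known in special situations (Fujimoto's results for non-uniruled threefolds, which the paper invokes). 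Your reduction to ``abelian, general type, Calabi--Yau type fibres'' is not available.

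Second, and more fundamentally, producing a nef $\lambda_1(f)$-eigenclass $\eta$ is the easy part (Perron--Frobenius on the nef cone, as in the paper's Corollary \ref{cor:eigenvector pairs exist}); the crux is positivity. A single nef eigenclass does not yield a canonical height whose non-positivity locus is a proper closed subset: one needs the sum $\nu_++\nu_-$ of the forward and backward eigenclasses to be \emph{big} (the paper's Condition (B)), so that the Northcott property holds off the augmented base locus $\BB_+(\nu_++\nu_-)$ and $\widehat{h}_{D_+}(P)>0$ can be forced for non-periodic $P$. Condition (B) genuinely fails in general --- the paper's own example $f=g\times\id$ on $S\times\PP^1$ with $S$ a K3 surface has a dense union of periodic subvarieties on which the eigenclasses vanish --- and it is verified only via the Hodge index theorem (surfaces), Picard rank $2$, or the Beauville--Bogomolov form (hyper-K\"ahler). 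One also needs $h^1(X,\O_X)=0$ (Condition (C)) to lift the numerical eigenclasses to $\RR$-linear-equivalence eigendivisors before running the Tate limit. So your argument, once the gaps are filled in the cases where they can be filled, reproduces the paper's Theorems \ref{thm:hk} and \ref{ksforaandb}; it does not prove the conjecture.
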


The conjecture is known in many cases; see \cite[Remark 1.8]{mss} for a comprehensive list including abelian varieties \cite{ks-jordan,silverman-ab-vars}, automorphisms of smooth projective surfaces \cite{surface-aut,dynamical-equals-arithmetic}, as well as certain product varieties \cite{sano-prod-endo}. Recently the conjecture was proved for all regular endomorphisms of smooth projective surfaces~\cite{mss}; the proof in the case of surfaces relies heavily on the birational classification.

Our aim in this paper is to prove the conjecture in several higher dimensional settings. A basic difficulty is that the classification of $n$-folds (for $n\geq3$) is much more difficult and less complete than the classification of surfaces: there is no neat analog of the Enriques--Kodaira classification, and one must instead attempt to understand the interplay between the geometry of endomorphisms and the classification theory of higher-dimensional varieties. 

We break up our analysis according to Kodaira dimension $\kappa(X)$. We note that by \cite[Theorem A]{nakayamazhang}, if $\kappa(X)>0$, then an iterate of $f$ preserves the Iitaka fibration and so there is no \(\QQbar\)-point \(P\) on \(X\) with a Zariski dense orbit; as a result Conjecture \ref{conj:main} vacuously holds. Thus, the only remaining cases to consider are those of Kodaira dimension $0$ and $-\infty$.

Let us now discuss in detail our main results as well as several consequences. We say a smooth projective variety $X$ is \emph{Calabi--Yau} if \(\dim X \geq 3\), \(\O_X(K_X)\) is trivial and \(h^0(\Omega^p_X) = 0\) for \(0 < p < n\). We say $X$ is \emph{hyper-K\"ahler} if its complex analytification is simply connected and \(H^0(X,\Omega^2_X)\) is spanned by a symplectic form.

\begin{theorem}
\label{thm:hk}
Conjecture \ref{conj:main} is true for surjective endomorphisms of hyper-K\"ahler varieties.
\end{theorem}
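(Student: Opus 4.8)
The plan is to reduce to the case of automorphisms, and then to imitate — with the Beauville--Bogomolov--Fujiki (BBF) quadratic form $q$ playing the role of the intersection pairing on a surface — the construction of canonical heights for surface automorphisms of positive entropy. First I would note that $f$ is automatically an automorphism: if $\sigma$ spans $H^0(X,\Omega^2_X)$ then $f^\ast\sigma=c\,\sigma$ for some $c\in\CC^\ast$, and since a suitable power of $\sigma$ is a nowhere-vanishing section of $\O_X(K_X)$, this power is scaled by the nonzero constant $c^{(\dim X)/2}$, so $f$ is \'etale; as $X$ is simply connected, $f$ is an isomorphism. I would then check that $\lambda:=\lambda_1(f)$ equals the spectral radius of $f^\ast$ on $N^1(X)_\RR$: decomposing $(f^m)^\ast H$ along the generalized eigenspaces of $f^\ast$, the growth of $((f^m)^\ast H\cdot H^{2\dim X-1})$ is governed by the $N^1$-norm of $(f^m)^\ast H$, and when that norm grows the leading (nef) eigenclass pairs strictly positively against $H^{2\dim X-1}$ by the Fujiki relation $\int_X\alpha^{\dim X}=c_X\,q(\alpha)^{(\dim X)/2}$.

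If $\lambda=1$ the theorem is immediate, since $\underline{\alpha}_f(P)\ge1$ (because $h_H^+\ge1$) while $\overline{\alpha}_f(P)\le\lambda_1(f)=1$ by the standard upper bound for endomorphisms. So assume $\lambda>1$. The operator $f^\ast$ preserves $q$ on $N^1(X)_\RR$, a form of signature $(1,\rho-1)$, and has spectral radius $>1$, so it is a loxodromic isometry: $\lambda$ and $\lambda^{-1}$ are simple real eigenvalues and every other eigenvalue lies on the unit circle. Writing the corresponding eigenclasses as limits $\theta^\pm=\lim_m\lambda^{\mp m}(f^{\pm m})^\ast H$ (up to scale) exhibits $\theta^\pm$ as nef, and one computes $q(\theta^\pm)=0$. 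A reversed Cauchy--Schwarz inequality in this Lorentzian space then gives $q(\theta^+,\theta^-)>0$ (the $\theta^\pm$ are non-proportional and lie in the closure of the positive cone), so $\theta^++\theta^-$ is nef with positive BBF square, hence big by the Fujiki relation.

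Next I would construct the canonical heights. Since $f^\ast\theta^+=\lambda\theta^+$, the Call--Silverman limit $\hat h^+(Q):=\lim_m\lambda^{-m}h_{\theta^+}(f^mQ)$ converges, satisfies $\hat h^+=h_{\theta^+}+O(1)$ and $\hat h^+\circ f=\lambda\hat h^+$, and is $\ge0$ because $\theta^+$ is nef; using that $f$ is invertible, the backward limit $\hat h^-(Q):=\lim_m\lambda^{-m}h_{\theta^-}(f^{-m}Q)$ likewise gives $\hat h^-=h_{\theta^-}+O(1)$, $\hat h^-\circ f=\lambda^{-1}\hat h^-$, and $\hat h^-\ge0$, and adding them yields $\hat h^++\hat h^-=h_{\theta^++\theta^-}+O(1)$. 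Now suppose the forward orbit of $P$ is Zariski dense. If $\hat h^+(P)=0$, then $\hat h^+(f^mP)=0$ for all $m\ge0$, so $h_{\theta^++\theta^-}(f^mP)=\lambda^{-m}\hat h^-(P)+O(1)$ is bounded; but $\theta^++\theta^-$ is big, so a set of bounded degree and bounded $(\theta^++\theta^-)$-height cannot be Zariski dense (Northcott's theorem, applied away from a proper closed subset), a contradiction. Hence $\hat h^+(P)>0$. Choosing $c>0$ with $cH-\theta^+$ ample gives $\hat h^+\le c\,h_H^++O(1)$, so $c\,h_H^+(f^mP)\ge\lambda^m\hat h^+(P)-O(1)$ and therefore $\underline{\alpha}_f(P)\ge\lambda$; combined with $\overline{\alpha}_f(P)\le\lambda_1(f)=\lambda$, this shows $\alpha_f(P)$ exists and equals $\lambda_1(f)$.

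The main obstacle, I expect, is the canonical-height step. The eigenclasses $\theta^\pm$ are typically irrational (indeed $\lambda$ is generally a Salem number), so one must set up height functions for $\RR$-divisor classes together with functoriality, positivity, and Northcott-type statements carrying uniform $O(1)$ error terms; and one must organize the hyper-K\"ahler geometry — that nef classes lie in the closure of the positive cone, and that a nef class is big exactly when its BBF square is positive — so that the surface-automorphism argument of Kawaguchi and Silverman runs in every dimension.
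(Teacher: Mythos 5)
Your high-level strategy matches the paper's: reduce to automorphisms, exploit that $f^*$ is an isometry of the Beauville--Bogomolov--Fujiki form of signature $(1,\rho-1)$ to produce nef eigenclasses $\theta^\pm$ with $q(\theta^\pm)=0$ and $\theta^++\theta^-$ big, build Call--Silverman canonical heights $\hat h^\pm$, and finish with a Northcott argument. Your reduction to automorphisms via the symplectic form is a clean alternative to the paper's proof (which instead uses $\chi(\O_X)=1+\tfrac12\dim X$ together with Fujimoto's lemma that surjective endomorphisms of varieties with $K_X\equiv 0$ are \'etale); and your ``loxodromic isometry'' reasoning is essentially equivalent to the paper's appeal to Oguiso's theorem that $\lambda_1(f)=\lambda_{\dim X -1}(f)$, since both rest on the same spectral analysis of an isometry of a Lorentzian form.

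There is, however, a genuine gap at the step ``$\hat h^\pm\ge 0$ because $\theta^\pm$ is nef.'' Nefness of an $\RR$-class does not, on its own, give you a height representative bounded below on all of $X$; this is exactly the point where the paper has to work harder. What the paper proves is only that $\hat h_{D_+}$ and $\hat h_{D_-}$ are nonnegative on $X(\QQbar)\setminus\BB_+(D)$, where $D=D_++D_-$ is the big-and-nef sum, and the proof is in two stages: first one gets $\hat h=\hat h_{D_+}+\hat h_{D_-}\ge C$ off $\BB_+(D)$ from bigness of $D$ (Lemma~\ref{positivity}), then one must know that $\BB_+(D)$ is $f$-invariant (Corollary~\ref{cor:invariant-B+}, which in turn rests on the nontrivial Lemma~\ref{bpluscombos} that $\BB_+(a_1D_1+a_2D_2)$ is independent of $a_1,a_2>0$ for nef $D_1,D_2$) in order to iterate the bound $\lambda^n\hat h_{D_+}(P)+\lambda^{-n}\hat h_{D_-}(P)\ge 2C$ and pass to the limit. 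Without this you cannot rule out $\hat h^+(P)<0$, which your contradiction argument does not address. The same $f$-invariance (or at minimum an explicit argument that a Zariski-dense orbit must have infinitely many points outside any fixed proper closed subset) is also needed to make your Northcott step rigorous, since Northcott for a big class $D$ only applies away from $\BB_+(D)$. Your closing paragraph correctly flags that the $\RR$-divisor height machinery requires care, but the specific claim ``nef $\Rightarrow$ nonnegative canonical height'' is precisely the place where the machinery around $\BB_+$ is doing the real work, and stating it as an immediate consequence of nefness is not correct.
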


The key to proving Theorem \ref{thm:hk} is to construct a canonical height function associated to $f$, following a strategy developed by Silverman~\cite{silvermank3} and Kawaguchi~\cite{surface-aut} in dimension \(2\). Along the way, we obtain a hyper-K\"ahler analog of a result of Cantat and Kawaguchi~\cite[Proposition 4.1]{cantatsurvey}, the latter having been shown for surfaces. In the statement below, if $V$ is $f$-periodic with \(f^n(V) = V\), then \(\lambda_1(f\vert_V)\) is interpreted as \(\lambda_1(\widetilde{f^n} \vert_{\widetilde{V}})^{1/n}\), where \(\widetilde{f^n} : \widetilde{V} \to \widetilde{V}\) is the induced automorphism of the normalization \(\widetilde{V}\) of \(V\).

\begin{theorem}
\label{intro:contract}
Suppose that \(f: X \to X\) is an automorphism of a hyper-K\"ahler variety satisfying \(\lambda_1(f) > 1\).  Let
\[
E(f) = \bigcup \set{ V : \text{$\dim V \geq 1$, $V$ is $f$-periodic, $\lambda_1(f \vert_V) < \lambda_1(f)$, and $\lambda_1(f^{-1} \vert_V) < \lambda_1(f^{-1})$}} 
\]
Then \(E(f)\) is not Zariski-dense in \(X\), and there exists a morphism \(\pi : X \to Y\) which contracts every connected component of \(E(f)\) to a point.
\end{theorem}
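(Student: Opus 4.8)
The plan is to produce $\pi$ as the morphism associated to a suitable $f$-built big and nef $\RR$-divisor class, adapting the surface argument of Cantat and Kawaguchi but substituting the base-point-free theorem for their appeal to contractibility of negative-definite curve configurations. Write $\lambda = \lambda_1(f)$. Because $f$ is an automorphism, $f^*$ acts on $N^1(X)_\RR$ preserving both the nef cone and the Beauville--Bogomolov form $q$; by the Hodge index theorem for hyper-K\"ahler varieties $q$ has signature $(1,\rank N^1(X) - 1)$ there, and $\lambda$ — which equals the spectral radius of $f^*$ on $N^1(X)_\RR$ — satisfies $\lambda = \lambda_1(f^{-1})$, since the spectrum of a $q$-isometry is stable under $\mu \mapsto \mu^{-1}$. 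Applying the generalized Perron--Frobenius theorem to the cone-preserving maps $f^*$ and $(f^*)^{-1}$ gives nonzero nef classes $\theta^+,\theta^- \in N^1(X)_\RR$ with $f^*\theta^+ = \lambda\,\theta^+$ and $f^*\theta^- = \lambda^{-1}\theta^-$. As $\lambda \neq 1$ we get $q(\theta^+) = q(\theta^-) = 0$, and since $\theta^+$ and $\theta^-$, being nef, lie in the closure of the positive cone but are not proportional (distinct eigenvalues), $q(\theta^+,\theta^-) > 0$.

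Next set $\delta = \theta^+ + \theta^-$. This class is nef, and by the Fujiki relation $\delta^{\dim X} = c_X\, q(\delta)^{(\dim X)/2} = c_X\,\bigl(2q(\theta^+,\theta^-)\bigr)^{(\dim X)/2} > 0$, so $\delta$ is also big. Since $X$ is smooth with $K_X$ trivial, $\delta - K_X = \delta$ is nef and big, so by the base-point-free theorem for $\RR$-Cartier divisors $\delta$ is semiample: there is a birational morphism $\pi : X \to Y$ onto a normal projective variety, with connected fibers, such that $\delta \equiv \pi^*A$ for an ample $\RR$-Cartier class $A$ on $Y$. Its exceptional locus is $\mathrm{Exc}(\pi) = \BB_+(\delta)$, which by Nakamaye's theorem equals the null locus of $\delta$ and is a proper closed subset because $\delta$ is big.

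I then claim each subvariety $V$ occurring in the union defining $E(f)$ is contracted by $\pi$ to a point. Say $V$ is irreducible of dimension $d \geq 1$ with $f^k(V) = V$, $\lambda_1(f|_V) < \lambda$, and $\lambda_1(f^{-1}|_V) < \lambda$; let $\nu : \widetilde V \to X$ be the normalization map and $g = \widetilde{f^k}$ the induced automorphism. Then $\nu^*\theta^+ \in N^1(\widetilde V)_\RR$ satisfies $g^*(\nu^*\theta^+) = \lambda^k\,\nu^*\theta^+$; were this class nonzero, the spectral radius of $g^*$ would be at least $\lambda^k$, and since the first dynamical degree of an automorphism of a normal projective variety equals that spectral radius, we would get $\lambda_1(f|_V) = \lambda_1(g)^{1/k} \geq \lambda$, a contradiction. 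Hence $\nu^*\theta^+ = 0$, and symmetrically $\nu^*\theta^- = 0$ using $f^{-1}$ and $\theta^-$; therefore $\nu^*\delta = 0$ in $N^1(\widetilde V)_\RR$. As $\delta \equiv \pi^*A$ with $A$ ample, $(\pi \circ \nu)^*A = 0$, which forces $\pi(V)$ to be a point. In particular $E(f)$ is contained in $\mathrm{Exc}(\pi) = \BB_+(\delta)$, hence is not Zariski-dense; and since a connected component of a union of irreducible subvarieties is such a union with its members pairwise joined by chains of nonempty intersections, and $\pi$ sends any two intersecting members to the same point, $\pi$ contracts every connected component of $E(f)$ to a point.

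The step I expect to be the main obstacle is the construction of the morphism $\pi$ itself. Nakamaye's theorem already gives for free that $\mathrm{Null}(\delta)$, and hence $E(f)$, is not Zariski-dense, but to obtain an honest contraction one needs a base-point-free theorem, and $\delta$ is genuinely an $\RR$-divisor because the eigenvalues $\lambda^{\pm1}$ are typically irrational — so the real-coefficient version of the theorem must be invoked, which is available precisely because $K_X$ is trivial. Verifying that $\delta$ is big (that is, $q(\theta^+,\theta^-) > 0$) and that the first dynamical degree of an automorphism of a normal variety coincides with the spectral radius of its action on the N\'eron--Severi space are the remaining technical points; both follow from standard facts but deserve careful statements.
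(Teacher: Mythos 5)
Your proof is essentially correct and establishes what the theorem asks for, but it takes a meaningfully different path from the paper at two places. First, to see that $\lambda_1(f) = \lambda_1(f^{-1})$ you observe that $f^*$ preserves the nondegenerate Beauville--Bogomolov form, so the spectrum of $f^*$ is closed under $\mu \mapsto \mu^{-1}$; the paper instead invokes Oguiso's theorem that $\lambda_1(f)=\lambda_{\dim X - 1}(f)$ for hyper-K\"ahler automorphisms together with the general identity $\lambda_1(f^{-1})=\lambda_{\dim X-1}(f)$. Your route is cleaner and more self-contained, since the isometry argument is completely elementary linear algebra. Second, and more significantly, the paper deliberately does \emph{not} invoke a basepoint-free theorem directly: they explicitly note that Kawamata's theorem would apply immediately if $\nu_++\nu_-$ were a $\QQ$-class, but since the eigenvalues $\lambda^{\pm 1}$ are typically irrational one is stuck with a genuine $\RR$-divisor, and so they instead realize $Y$ as the log canonical model of a klt pair $(X,\Delta)$ with $\Delta \equiv \epsilon(\nu_++\nu_-)$ via BCHM, then carry out a Nakayama $N_\sigma$-argument to verify that the log canonical model map is actually a morphism. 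You cite a ``base-point-free theorem for $\RR$-Cartier divisors'' as a black box; the mathematical content is the same, but such a clean $\RR$-coefficient statement is not as standard a citation as the $\QQ$-version, and in effect the BCHM log-canonical-model construction the paper carries out \emph{is} the proof of that black box in this setting. So you should either locate a precise reference for the $\RR$-BPF or reproduce the BCHM argument; as written this step is the one place where the proof is incomplete as a matter of citation, though not as a matter of ideas. The remaining pieces --- existence of the eigenvector pair via Birkhoff's Perron--Frobenius, $q(\theta^\pm)=0$ and $q(\theta^+,\theta^-)>0$ giving bigness of $\delta$ by the Fujiki relation, pulling back to the normalization of a periodic $V$ to kill $\theta^\pm\vert_V$, and the Nakamaye/null-locus identification of the contracted set --- all match or are interchangeable with the paper's Lemmas~\ref{aandbhold}, \ref{lem:autorestrict}, and Theorem~\ref{dbigperiodic}.
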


As a consequence of Theorem \ref{thm:hk}, combined with the work of Sano \cite{sano-prod-endo}, we are able to show that the conjecture holds for automorphisms of all varieties with \(K_X \equiv 0\) as long as it holds for automorphisms of Calabi--Yau varieties.

\begin{corollary}
\label{cor:min Kod 0 <-> CYn}
Let $n$ be a positive integer. Then Conjecture \ref{conj:main} is true for all automorphisms of smooth projective varieties $X$ with dimension at most $n$ and $K_X$ numerically trivial if and only if Conjecture \ref{conj:main} is true for all automorphisms of smooth Calabi--Yau varieties with dimension at most $n$.
\end{corollary}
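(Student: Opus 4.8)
The forward implication is immediate: a Calabi--Yau variety has $\O_X(K_X)$ trivial, hence $K_X$ numerically trivial, so the automorphisms of smooth Calabi--Yau varieties of dimension at most $n$ form a sub-class of the automorphisms of smooth projective varieties $X$ of dimension at most $n$ with $K_X \equiv 0$. The content is the converse, and the plan is to pass to a Beauville--Bogomolov cover on which an iterate of the given automorphism becomes a product of automorphisms of an abelian variety, of Calabi--Yau varieties, and of hyper-K\"ahler varieties, and then to combine Sano's product theorem \cite{sano-prod-endo} with the known abelian case \cite{ks-jordan,silverman-ab-vars}, with the hypothesis of the corollary for the Calabi--Yau factors, and with Theorem \ref{thm:hk} for the hyper-K\"ahler factors.

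So let $f \colon X \to X$ be an automorphism with $\dim X \leq n$ and $K_X \equiv 0$, and let $P$ be a point with Zariski-dense $f$-orbit. I would first record two routine reductions, each a consequence of the behaviour of $\lambda_1$ and of arithmetic degrees under iteration and under finite \'etale covers. (a) Conjecture \ref{conj:main} for $(X,f)$ follows from Conjecture \ref{conj:main} for $(X,f^m)$: since $X$ is irreducible, some $f^i(P)$ has dense $f^m$-orbit, and then $\alpha_f(P)^m = \alpha_{f^m}(f^i(P)) = \lambda_1(f^m) = \lambda_1(f)^m$, where $\alpha_f$ exists because $f$ is a morphism \cite[Theorem 3]{ks-jordan}. (b) If $\tilde\pi \colon \tilde X \to X$ is a connected finite \'etale cover and $g \colon \tilde X \to \tilde X$ lifts $f$, then Conjecture \ref{conj:main} for $(\tilde X,g)$ implies it for $(X,f)$: the $\tilde\pi$-preimage of the $f$-orbit of $P$ is the union of the $g$-orbits of the finitely many points over $P$, which is dense in the irreducible $\tilde X$, so one of these points has dense $g$-orbit; since $\tilde\pi^*H$ is ample and $h_{\tilde\pi^*H} = h_H \circ \tilde\pi + O(1)$ one gets $\alpha_g = \alpha_f \circ \tilde\pi$ on $\tilde X(\QQbar)$, while $\lambda_1(g) = \lambda_1(f)$.

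Next, fix a Beauville--Bogomolov decomposition: a connected finite \'etale cover $\tilde X_0 \to X$ with $\tilde X_0 \cong A \times W$, $W = \prod_i Y_i \times \prod_j Z_j$, where $A$ is an abelian variety, the $Y_i$ are Calabi--Yau, and the $Z_j$ are hyper-K\"ahler; note $\dim Y_i \leq \dim \tilde X_0 = \dim X \leq n$. Since $X$ is projective, $\pi_1(X)$ is finitely generated and so has only finitely many subgroups of index $\deg(\tilde X_0/X)$; hence some iterate $f^m$ lifts to $g \in \Aut(\tilde X_0)$. As each $Y_i$ and $Z_j$ has $h^0(\Omega^1) = 0$, the Albanese morphism of $\tilde X_0$ is the projection to $A$, so $g$ respects it; and as $h^0(T_{Y_i}) = h^0(\Omega^{\dim Y_i - 1}_{Y_i}) = 0$ and $h^0(T_{Z_j}) = h^0(\Omega^1_{Z_j}) = 0$ (using the trivialization of $\omega_{Y_i}$, resp.\ the symplectic form on $Z_j$), the automorphism group of $W$ is discrete, which together with the connectedness of $A$ forces $g$ to split as a product $g_A \times g_W$ with $g_A \in \Aut(A)$ and $g_W \in \Aut(W)$. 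A standard argument (again using $h^0(T) = 0$ on each factor) shows that $\Aut(W)$ permutes the factors $Y_i, Z_j$ and that a factor-preserving automorphism of $W$ is a product of automorphisms of the individual factors; so after replacing $m$ by a multiple we may assume $g = g_A \times \prod_i \phi_{Y_i} \times \prod_j \phi_{Z_j}$. Conjecture \ref{conj:main} then holds for $g_A$ by the abelian case, for each $\phi_{Y_i}$ by the hypothesis of the corollary (as $\dim Y_i \leq n$), and for each $\phi_{Z_j}$ by Theorem \ref{thm:hk}; applying Sano's product theorem repeatedly gives Conjecture \ref{conj:main} for $g$, and the reductions (a) and (b) then propagate it back to $(X,f^m)$ and to $(X,f)$.

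The step I expect to require the most care is this structural reduction: making precise that an iterate of $f$ lifts to a genuine product automorphism on a Beauville--Bogomolov cover -- the Albanese compatibility, the splitting $g = g_A \times g_W$, and the permutation of the Calabi--Yau and hyper-K\"ahler factors -- together with pinning down the exact statement of Sano's product theorem and checking that its hypotheses apply in the present generality. By contrast, the bookkeeping about iterates and \'etale covers, and the invariance of $\lambda_1$ and of arithmetic degrees under finite morphisms, should be routine.
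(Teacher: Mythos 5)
Your proposal is correct and follows essentially the same route as the paper: pass to a Beauville--Bogomolov \'etale cover $A\times\prod_i Y_i\times\prod_j Z_j$, split (an iterate of) the automorphism as a product of automorphisms of the factors, and then combine the abelian case, Theorem~\ref{thm:hk} for the hyper-K\"ahler factors, and the hypothesis for the Calabi--Yau factors via Sano's product theorem. The main difference is bookkeeping rather than strategy: where you re-derive the lift to the cover from finite generation of $\pi_1$ and sketch the splitting of $\Aut(A\times W)$ directly from $h^0(T)=0$ and the uniqueness of the decomposition, the paper instead invokes \cite[Proposition~3.1(3)]{BGRS} for the lift and \cite[Theorem~4.6, Lemma~5.1]{sano-prod-endo} for the splitting; in particular the ``standard argument'' you allude to for the permutation of the Calabi--Yau and hyper-K\"ahler factors is precisely what those Sano results supply, so if you wanted a fully self-contained treatment that is the one step where you would need to say more.
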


\begin{remark}
\label{rmk:min Kod 0 <-> CYn}
The Abundance Conjecture implies that every smooth projective minimal variety $X$ of Kodaira dimension 0 has $K_X$ numerically trivial. Therefore, assuming the Abundance Conjecture in dimension at most $n$, Corollary \ref{cor:min Kod 0 <-> CYn} reduces Conjecture \ref{conj:main} for automorphisms of smooth projective minimal varieties of Kodaira dimension 0 to the special case of smooth Calabi--Yau varieties.
\end{remark}

In the case of dimension 3, we obtain more detailed results for \emph{endomorphisms} as well as automorphisms. Using results of Fujimoto \cite{fujimoto}, we show:

\begin{proposition}
\label{prop:intro-3-fold-Kod-dim-0}
Conjecture \ref{conj:main} holds for all surjective endomorphisms $f\colon X\to X$ of degree $\deg(f)>1$ on smooth projective threefolds $X$ of Kodaira dimension 0 .
\end{proposition}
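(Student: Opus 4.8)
The plan is to invoke Fujimoto's classification \cite{fujimoto} of smooth projective threefolds of Kodaira dimension $0$ that admit a surjective endomorphism of degree $>1$, and then reduce to cases of Conjecture \ref{conj:main} that are already known. According to \cite{fujimoto}, such an $X$ admits a finite \'etale cover $\pi\colon\widetilde X\to X$ to which some iterate $f^k$ lifts as a surjective endomorphism $\widetilde f\colon\widetilde X\to\widetilde X$, where $\widetilde X$ has a very restricted type: the two essential cases are that $\widetilde X$ is an abelian threefold, or that $\widetilde X\cong S\times E$ with $E$ an elliptic curve and $S$ a K3 or Enriques surface (the remaining possibilities on Fujimoto's list, such as products with bielliptic or abelian surfaces, acquire one of these forms after a further \'etale cover). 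I would first dispose of the product case by showing that, after one more iterate, $\widetilde f$ splits as a product $g\times h$. Indeed, since $\Alb(S)=0$ we have $\Alb(\widetilde X)=E$, and by functoriality of the Albanese the projection $\pi_E\colon\widetilde X\to E$ is $\widetilde f$-equivariant over the surjective endomorphism $\Alb(\widetilde f)\colon E\to E$; hence $\widetilde f$ carries each fiber $S\times\{e\}$ onto $S\times\{\Alb(\widetilde f)(e)\}$, and as $S$ admits no surjective endomorphism of degree $>1$ this restriction is an isomorphism of surfaces. The resulting morphism from the connected curve $E$ to the discrete automorphism scheme of $S$ is constant, so $\widetilde f=g\times h$ with $g\in\Aut(S)$ and $h=\Alb(\widetilde f)$, and $\deg h=\deg\widetilde f>1$.

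Next I would verify Conjecture \ref{conj:main} on $\widetilde X$ in each case. For an abelian threefold this is the theorem of Kawaguchi--Silverman and Silverman \cite{ks-jordan,silverman-ab-vars}. For $\widetilde X=S\times E$ with $\widetilde f=g\times h$ as above, the conjecture holds for the surface automorphism $g$ by \cite{surface-aut,dynamical-equals-arithmetic} (or \cite{mss}), and for the degree-$>1$ endomorphism $h$ of the one-dimensional abelian variety $E$; since a point of $S\times E$ with Zariski-dense orbit has Zariski-dense projection to each factor, Sano's product theorem \cite{sano-prod-endo} gives the conjecture for $\widetilde f$. In every case $\lambda_1(\widetilde f)=\lambda_1(f^k)=\lambda_1(f)^k$.

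It then remains to descend along $\pi$. Let $P\in X(\QQbar)$ have Zariski-dense forward orbit under $f$; replacing $P$ by some $f^j(P)$, its orbit under $f^k$ is still dense. Choose $\widetilde P\in\widetilde X(\QQbar)$ with $\pi(\widetilde P)=P$. Then $\pi$ maps the $\widetilde f$-orbit of $\widetilde P$ onto the $f^k$-orbit of $P$, so the closure of the $\widetilde f$-orbit of $\widetilde P$ is a closed subset of the irreducible variety $\widetilde X$ surjecting onto $X$ through the finite map $\pi$; it therefore has dimension $\dim X=\dim\widetilde X$ and equals $\widetilde X$, so $\widetilde P$ has Zariski-dense orbit and $\alpha_{\widetilde f}(\widetilde P)=\lambda_1(\widetilde f)$ by the previous step. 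Because $\pi$ is finite, $h_{\pi^\ast H}=h_H\circ\pi+O(1)$, so the arithmetic degrees of $P$ under $f^k$ coincide with those of $\widetilde P$ under $\widetilde f$; combining this with $\overline{\alpha}_{f^k}(P)=\overline{\alpha}_f(P)^k$, $\underline{\alpha}_{f^k}(P)=\underline{\alpha}_f(P)^k$, and the equality $\overline{\alpha}_f(P)=\underline{\alpha}_f(P)$ for morphisms \cite{ks-jordan}, yields $\alpha_f(P)=\lambda_1(f)$.

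The genuine content is externalized to Fujimoto's classification \cite{fujimoto}; once the geometry of $X$ — together with the fact that an iterate of $f$ lifts to an abelian or product \'etale cover — is granted, the rest is a routine transfer of known instances of the conjecture along an equivariant finite \'etale cover and across a product. The steps requiring care are the splitting $\widetilde f=g\times h$ in the product case (which rests on the rigidity of K3 and Enriques surfaces and the vanishing of their Albanese) and the verification that a point with dense orbit on $X$ lifts to one with dense orbit on $\widetilde X$.
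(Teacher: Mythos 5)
Your proposal is correct and follows essentially the same route as the paper: both rely on Fujimoto's classification to produce a finite \'etale cover of abelian or product type, verify Conjecture \ref{conj:main} on the cover using known cases (abelian varieties, surface automorphisms, products via Sano), and descend along the \'etale cover. A few differences in routing are worth noting. The paper first reduces explicitly to a smooth minimal model via Fujimoto's Corollary 4.4 (all extremal contractions are of type (E1) and $f$ descends), invokes Abundance in dimension 3 to get $K_X\equiv 0$, applies the Beauville--Bogomolov decomposition over $\QQbar$ from [BGRS] to obtain a cover $A\times\prod Y_i\times\prod Z_j$, and then uses Fujimoto's claim that $\pi_1(X)$ is infinite (via $\chi(\O_X)=0$) to force the cover to be an abelian threefold or $E\times(\text{K3})$; you instead cite ``Fujimoto's classification'' wholesale and admit Enriques and other cases that must be killed by a further cover, which is a slightly hand-wavier version of the same reduction and glosses over the minimal-model step. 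On the other side, you fill in more detail than the paper does for the later steps: you prove the product splitting $\widetilde f=g\times h$ by hand via the Albanese map and the discreteness of $\Aut(S)$, whereas the paper simply cites Sano's Theorem 1.3 for products of abelian varieties and K3 surfaces; and you give a manual descent argument (dense orbits lift along finite covers, $h_{\pi^\ast H}=h_H\circ\pi+O(1)$), where the paper cites [mss, Lemma 3.2]. Both the manual splitting argument and the descent are correct — in particular the point that the restriction of $\widetilde f$ to a fiber is an isomorphism because K3/Enriques surfaces have $\chi(\O)\neq 0$ and hence admit no \'etale self-covers of degree $>1$ is sound — so the proposal holds up, but you should be explicit that the passage to a smooth minimal model (and $K_X\equiv 0$) is a needed intermediate step hidden inside ``Fujimoto's classification.''
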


Since the Abundance Conjecture is known in dimension 3 ~\cite{kawamata-abundance}, by Corollary \ref{cor:min Kod 0 <-> CYn} and Remark \ref{rmk:min Kod 0 <-> CYn}, to prove the conjecture for automorphisms of smooth minimal 3-folds of Kodaira dimension 0, it is enough to handle the case of automorphisms of smooth Calabi--Yau 3-folds. As such, we turn to the case of Calabi--Yau 3-folds and prove the following technical result:

\begin{theorem}
\label{thm:intro-CY3-aut}
Let $f$ be an automorphism of a smooth Calabi--Yau threefold $X$. Suppose that either
\begin{enumerate}
\item\label{intro-CY3-aut::Miyaoka} $c_2(X)$ is positive on $\Nef(X)$, or
\item\label{intro-CY3-aut::semiample} there is a non-zero semi-ample class $D\in\Nef(X)\cap N^1(X)$ such that $c_2(X)\cdot D=0$.
\end{enumerate}
Then Conjecture \ref{conj:main} holds for $(X,f)$.
\end{theorem}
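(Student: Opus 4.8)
The plan is to split according to whether $\lambda_1(f)=1$ or $\lambda_1(f)>1$. If $\lambda_1(f)=1$, the theorem is immediate: $\alpha_f(P)$ exists since $f$ is a morphism, and $1\le\underline{\alpha}_f(P)\le\overline{\alpha}_f(P)\le\lambda_1(f)=1$ by the standard upper bound for arithmetic degrees, so $\alpha_f(P)=1=\lambda_1(f)$ for every $P$. Assume then that $\lambda_1(f)>1$; we must produce the lower bound $\underline{\alpha}_f(P)\ge\lambda_1(f)$ whenever the orbit of $P$ is Zariski dense.

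The common input for both hypotheses is that $f$, being an automorphism, fixes $c_2(X)$ and hence the linear functional $\ell:=c_2(X)\cdot(-)$ on $N^1(X)_{\RR}$, while $\ell\ge0$ on $\Nef(X)$ by Miyaoka's pseudo-effectivity of $c_2$ for minimal threefolds. Since $f^*$ preserves the salient, full-dimensional cone $\Nef(X)$ and has spectral radius $\lambda_1(f)$ on $N^1(X)_{\RR}$, Perron--Frobenius for cones gives a nonzero nef class $\theta$ with $f^*\theta=\lambda_1(f)\,\theta$; the $f^*$-invariance of $\ell$ then forces $\ell(\theta)=\lambda_1(f)\,\ell(\theta)$, whence $\ell(\theta)=0$. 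In case~(\ref{intro-CY3-aut::Miyaoka}) this contradicts the strict positivity of $\ell$ on $\Nef(X)\setminus\{0\}$, so in fact $\lambda_1(f)=1$ under hypothesis~(\ref{intro-CY3-aut::Miyaoka}), and we are in the trivial case above; thus~(\ref{intro-CY3-aut::Miyaoka}) is really the assertion that no automorphism of such an $X$ has positive entropy.

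For case~(\ref{intro-CY3-aut::semiample}) we have $\lambda_1(f)>1$ and a nonzero semi-ample class $D$ with $\ell(D)=0$, and both $D$ and $\theta$ lie on the supporting hyperplane $c_2^{\perp}$ of $\Nef(X)$. The first step is to produce an $f$-equivariant fibration out of the $c_2$-trivial face: the null locus $\{C:E\cdot C=0\text{ for all }E\in\Nef(X)\cap c_2^{\perp}\}$ is $f$-invariant (immediate from the $f^*$-invariance of that face), and using the structure of the nef and effective cones of a Calabi--Yau threefold along $c_2^{\perp}$ --- Wilson's analysis of the K\"ahler cone, Oguiso's work on $c_2$-trivial fibrations, and the finiteness up to automorphisms of the relevant fibration structures --- one should deduce that, after replacing $f$ by an iterate, the class $D$ induces an $f$-equivariant fibration $g:X\to Z$ with $\dim Z\in\{1,2\}$ to which $f$ descends as $\bar f\in\Aut(Z)$. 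If $\dim Z=2$, a general fibre is an elliptic curve, so the relative dynamical degree of $f$ over $Z$ is $1$ and the product formula gives $\lambda_1(f)=\lambda_1(\bar f)$; after an $f$-equivariant resolution, $(Z,\bar f)$ is an automorphism of a smooth projective surface, for which Conjecture~\ref{conj:main} holds \cite{surface-aut,dynamical-equals-arithmetic,mss}, and the standard height-descent estimate ($g(P)$ has Zariski-dense orbit, $h_{g^*A}(P)=h_A(g(P))+O(1)$, and $g^*A$ is dominated by a multiple of an ample class) upgrades $\underline{\alpha}_{\bar f}(g(P))=\lambda_1(\bar f)$ to $\underline{\alpha}_f(P)\ge\lambda_1(f)$. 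If $\dim Z=1$, then $Z$ is a curve and the general fibre $S$ is smooth with $\O_S(K_S)=\O_S$, hence abelian or K3; a fibre has trivial normal bundle, so $0=c_2(X)\cdot[S]=e(S)$, which forces $S$ to be an abelian surface. A Hodge-theoretic argument using $h^{2,0}(X)=0$ (Arakelov-type positivity of the Hodge bundle of a non-isotrivial family) then shows $g$ is isotrivial, so after a finite \'etale cover --- harmless for Conjecture~\ref{conj:main} and, after a further iterate, for equivariance --- one reduces to $X=A\times Z'$ with $A$ abelian and $f$ a product up to a translational term along $A$; one concludes by combining Conjecture~\ref{conj:main} for abelian varieties \cite{silverman-ab-vars,ks-jordan}, for curves (trivially), and for products \cite{sano-prod-endo}, the product formula again guaranteeing that $\lambda_1(f)$ is carried by the abelian factor.

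The main obstacle is the first step of case~(\ref{intro-CY3-aut::semiample}): showing that some iterate of $f$ genuinely preserves a $c_2$-trivial fibration of $X$. This requires knowing that the contraction of the (possibly non-polyhedral) face $\Nef(X)\cap c_2^{\perp}$ exists as a morphism and that $\langle f\rangle$ stabilizes it after passing to an iterate --- in effect a piece of the Morrison--Kawamata cone conjecture for Calabi--Yau threefolds in this setting --- and I expect this to be the technical heart of the argument. The secondary difficulty is the isotriviality and product reduction in the $\dim Z=1$ case; once the equivariant geometry is in place, the dynamical estimates are routine consequences of the product formula and the previously known instances of Conjecture~\ref{conj:main}.
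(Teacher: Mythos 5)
Your case~(\ref{intro-CY3-aut::Miyaoka}) is correct, and your route is in fact a little slicker than the paper's: the paper produces an $f^*$-fixed ample class by summing the finitely many ample classes realizing the minimum of $c_2(X)\cdot(-)$ (compactness of $\{c_2\cdot D\le M\}\cap\Nef(X)$ is \cite[Lemma 7.1]{BGRS}) and then uses $\dim\Aut^0(X)=h^0(X,T_X)=0$ to conclude $f$ has finite order, whereas your Perron--Frobenius eigenvector argument reaches $\lambda_1(f)=1$ directly.

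For case~(\ref{intro-CY3-aut::semiample}) your skeleton matches the paper (contract $D$, descend $f$, split on $\dim Y$), but there are two genuine gaps. First, the equivariance step you flag as the main obstacle needs no input from the cone conjecture. Since $D$ is a rational semi-ample class, its contraction $\pi\colon X\to Y$ already exists as a morphism; the missing ingredient is Oguiso's theorem (\cite[Theorem 4.3]{semi-ampleness-conj}) that a Calabi--Yau threefold admits only \emph{finitely many} $c_2$-contractions. Thus $f$ permutes this finite set of fibrations, an iterate fixes $[D]$, and $f$ then descends to an automorphism $g$ of $Y$ by \cite[Proposition 6.1(a)]{BGRS}. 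You should not need to contract the whole face $\Nef(X)\cap c_2^\perp$, which indeed would require the cone conjecture. Second, you tacitly assume $\dim Y\in\{1,2\}$, but nothing in the hypothesis prevents $D$ from being big, in which case $\dim Y=3$ and $\pi$ is birational. The paper dispatches this case quickly: $D=\pi^*H$ with $H$ ample on $Y$, and $f^*D=D$ forces $g^*H=H$, so $\lambda_1(g)=1$ and hence $\lambda_1(f)=1$ by Theorem~\ref{dyndegstuff}~(\ref{dyndegstuff::4}); the conjecture then holds by Remark~\ref{rmk:lambda1 is 1}.

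Your $\dim Y=2$ subcase agrees with the paper (relative dynamical degree $1$ and reduction to surfaces, packaged as Corollary~\ref{cor:fibered-over-surface}). For $\dim Y=1$ the paper's argument is shorter and avoids the Hodge-theoretic machinery you invoke: $Y\cong\PP^1$ since $\Alb(X)$ is trivial, the finite set $Z\subset\PP^1$ of singular values is $g$-invariant and hence fixed pointwise by an iterate of $g$, Viehweg--Zuo \cite[Theorem 0.2]{mfds-over-curves} gives $\#Z\ge 3$, so $g=\id$; but then $\pi$ yields a nonconstant $f$-invariant rational function, contradicting Zariski density of the orbit of $P$. Your sketch via Arakelov positivity and an \'etale product splitting is not implausible, but it would require substantially more work (for instance, handling isotrivial families that do not globally split as products) and uses strictly more machinery than the problem requires.
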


\begin{remark}[Understanding the hypotheses of Theorem \ref{thm:intro-CY3-aut}]
\label{rmk:intro-CY3-aut}
The essential point here is that Theorem \ref{thm:intro-CY3-aut} applies to all Calabi--Yau 3-folds with sufficiently large Picard number, assuming \cite[Question-Conjecture 2.6]{semi-ampleness-conj} and the semi-ampleness conjecture \cite[Conjecture 2.1]{CY-semiample}. In particular, by Remark \ref{rmk:min Kod 0 <-> CYn}, this would resolve Conjecture \ref{conj:main} for all automorphisms of smooth minimal 3-folds of Kodaira dimension 0 and sufficiently large Picard number.

Let us explain why this is the case. A theorem of Miyaoka \cite{Miyaoka} shows that either hypothesis (\ref{intro-CY3-aut::Miyaoka}) of Theorem \ref{thm:intro-CY3-aut} holds or $F:=c_2(X)^\perp\cap\Nef(X)$ is a non-zero face of the nef cone of $X$. In \cite[Question-Conjecture 2.6]{semi-ampleness-conj}, Oguiso asks if $F$ must always be rational when the Picard number $\rho(X)$ is sufficiently large. Provided this is true, there would be a non-zero rational class $D\in F$, and then the semi-ampleness conjecture \cite[Conjecture 2.1]{CY-semiample} would tell us that after scaling $D$ by a positive integer, we can assume it is semi-ample, i.e.~hypothesis (\ref{intro-CY3-aut::semiample}) holds.
\end{remark}

Finally, we turn to the case of Kodaira dimension $-\infty$. Here the closest analogue of a minimal variety is one which has the structure of a Mori fiber space; this includes, for example, all rational normal scrolls. We prove Conjecture \ref{conj:main} in two special cases:

\begin{theorem}
\label{thm:intro-MFS-partial}
Conjecture \ref{conj:main} holds for the following cases:

\begin{enumerate}
\item\label{intro-MFS-partial::auts3fold} all automorphisms of 3-folds which have the structure of a Mori fiber space.

\item\label{intro-MFS-partial::ratnormalscrolls} all surjective endomorphisms of $n$-fold rational normal scrolls.
\end{enumerate}
\end{theorem}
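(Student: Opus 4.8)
Write $\pi\colon X\to B$ for the Mori fiber space structure, so $\dim B\in\{0,1,2\}$ and $-K_X$ is $\pi$-ample. I will dispose of the cases $\dim B\le 1$ by showing $\lambda_1(f)=1$, which already yields the conjecture: since $h^+_H\ge 1$ we have $\underline{\alpha}_f(P)\ge 1$, while $\overline{\alpha}_f(P)\le\lambda_1(f)$ always, so $\lambda_1(f)=1$ forces $\alpha_f(P)=1=\lambda_1(f)$ for every $\QQbar$-point $P$. If $\dim B=0$ then $\rho(X)=1$, and $f^\ast$ preserves the ample cone and the lattice $N^1(X)\cong\ZZ$, hence $f^\ast=\id$ and $\lambda_1(f)=1$. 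If $\dim B=1$ then $\rho(X)=\rho(B)+1=2$, and the class $F$ of a fiber of $\pi$ is a nonzero nef class on the boundary of the two-dimensional cone $\Nef(X)$, so $\RR_{\ge0}F$ is one of its two extremal rays; since $f^\ast$ permutes these rays, $f^{2\ast}$ fixes $\RR_{\ge0}F$ and hence (as $F$ is primitive, $\pi$ having a section by Graber--Harris--Starr) also fixes the other ray, so $f^{2\ast}$ acts on $N^1(X)$ with spectral radius $1$ and $\lambda_1(f)=1$.

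The case $\dim B=2$ is the heart of (1): $\pi\colon X\to B$ is a conic bundle over a smooth projective surface. The plan is to show that after replacing $f$ by an iterate it descends to an automorphism $g\colon B\to B$ with $\pi\circ f=g\circ\pi$. Granting this, $f$ carries a general fiber $\PP^1$ isomorphically onto another fiber, so the product formula for relative dynamical degrees gives $\lambda_1(f)=\max(\lambda_1(g),\lambda_1(f\mid\pi))=\lambda_1(g)$; since $P$ has Zariski-dense orbit so does $\pi(P)$, hence $\pi(P)$ is not $g$-preperiodic, and the conjecture for the projective surface $(B,g)$ --- available from \cite{mss} together with the known cases for surface automorphisms \cite{surface-aut,dynamical-equals-arithmetic} --- gives $\alpha_g(\pi(P))=\lambda_1(g)$. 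Pulling back heights along the nef class $\pi^\ast H_B$ yields $\overline{\alpha}_f(P)\ge\alpha_g(\pi(P))=\lambda_1(g)=\lambda_1(f)$, which with the universal upper bound is an equality. The main obstacle is precisely this descent: a priori $X$ might admit infinitely many conic bundle structures and $f$ might permute the corresponding fiber-type extremal rays of $\NEb(X)$ with infinite orbit. I would rule this out by proving finiteness of the fiber-type extremal contractions of $X$ onto a surface (so $f^m$ fixes the ray of $\pi$); when $X$ is not rationally connected, I would instead invoke the MRC fibration, which becomes $f$-equivariant after passing to an iterate and whose base is a non-uniruled curve or surface for which the conjecture is known, reducing to a fibration argument over that base with $\PP^1$- or rational-surface fibers.

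\textbf{Part (2): endomorphisms of rational normal scrolls.} We may assume $n\ge 3$ (for $n\le 2$ the scroll is a point, $\PP^1$, or a Hirzebruch surface, covered by classical results and \cite{mss}), so $X=\PP(\E)\to\PP^1$ is a $\PP^{n-1}$-bundle with $\E=\bigoplus_{i=1}^n\O_{\PP^1}(a_i)$; normalizing $\min_i a_i=0$ makes $\xi:=c_1(\O_X(1))$ nef and semiample, and big unless $X\cong\PP^1\times\PP^{n-1}$, a product case handled by \cite{sano-prod-endo}, which we exclude. Since every morphism $\PP^{n-1}\to\PP^1$ is constant when $n-1\ge 2$, any surjective endomorphism $f$ of $X$ satisfies $\pi\circ f=g\circ\pi$ for a finite $g\colon\PP^1\to\PP^1$ of degree $e$, and $f$ restricts on a general fiber to a finite surjective map $\PP^{n-1}\to\PP^{n-1}$ of degree $d\ge 1$. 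In the basis $(F,\xi)$ of $N^1(X)$, with $F$ the class of a fiber of $\pi$, one computes $f^\ast F=eF$ and $f^\ast\xi=aF+d\xi$ with $a\ge 0$ (as $f^\ast\xi$ is nef), so $f^\ast$ has eigenvalues $e,d$ and $\lambda_1(f)=\max(e,d)$.

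It remains to prove $\alpha_f(P)=\lambda_1(f)$ for $P$ with dense orbit; the upper bound is automatic and the case $\lambda_1(f)=1$ follows from the squeeze above, so assume $\lambda_1(f)\ge 2$. If $e\ge d$, then $F$ is a nef eigenclass with $f^\ast F=eF$, and $\widehat{h}_F=\widehat{h}_g\circ\pi$ is positive at $P$ because $\pi(P)$ has dense, hence non-preperiodic, $g$-orbit; nefness of $F$ then gives $h_H(f^n(P))\gtrsim e^n$, so $\overline{\alpha}_f(P)\ge e$. If $d>e$, then $D_+:=aF+(d-e)\xi$ is a nef, big, semiample integral eigenclass with $f^\ast D_+=dD_+$: when $a>0$ it is positive on both extremal rays of $\NEb(X)$, hence ample, so $(X,f,D_+)$ is a polarized dynamical system and by Call--Silverman $\widehat{h}_{D_+}(P)=0$ iff $P$ is preperiodic, which a dense orbit is not; when $a=0$ we have $f^\ast\O_X(1)\cong\O_X(d)$, so $f$ descends along the scroll morphism $\phi\colon X\to\overline{X}$ to a polarized endomorphism $\Phi$ with $\Phi^\ast\O_{\overline{X}}(1)=d\,\O_{\overline{X}}(1)$, and since $\phi$ has fibers of dimension $\le 1$ while $\dim X\ge 3$, a dense orbit of $P$ maps to a non-preperiodic point, giving $\widehat{h}_{\O_{\overline{X}}(1)}(\phi(P))>0$ and hence $\widehat{h}_{D_+}(P)=(d-e)\widehat{h}_\xi(P)>0$. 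In either case $D_+$ is nef with $\widehat{h}_{D_+}(P)>0$, so $h_H(f^n(P))\gtrsim d^n$ and $\overline{\alpha}_f(P)\ge d=\lambda_1(f)$. The only points needing care here --- the normalization of $\E$ and the behaviour of Call--Silverman canonical heights on the possibly singular normal scroll $\overline{X}$ --- should present no real difficulty; the genuine obstacle in the whole theorem is the descent step of part (1).
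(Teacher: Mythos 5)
Your proposal is essentially sound in both parts, but the organization differs from the paper's, and one part is a genuinely different route.

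For part (1), you correctly identify the two key ingredients: descent of an iterate of $f$ to the base via finiteness of fiber-type $K_X$-negative extremal rays (this is Wi\'sniewski's theorem, which the paper cites directly rather than reproving), and the vanishing of the relative dynamical degree. However, you case-split by $\dim B$ and handle $\dim B\le 1$ by a separate spectral-radius-one argument; the paper's Theorem \ref{mfsreduction} treats all base dimensions uniformly by observing that $\rho(X/S)=1$ forces all curve classes in fibers to be proportional, hence $f$-invariant, hence $\lambda_1(f|_\pi)=1$ --- this works for endomorphisms too, not just automorphisms, and avoids the Picard-rank-two spectral argument and the appeal to Graber--Harris--Starr, which is overkill. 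Your ``alternative via the MRC fibration'' for the non-rationally-connected case is unnecessary and somewhat confused: Wi\'sniewski's finiteness applies regardless of rational connectedness, and you should simply cite it. One small point you gloss over: the base $S$ of a threefold Mori fiber space over a surface is normal and $\QQ$-factorial but may be singular, so you need the extension of the surface automorphism case to $\QQ$-factorial surfaces (the paper's Corollary \ref{cor:normal-proj-surfaces}, via the minimal resolution), not just the smooth-surface results of \cite{mss,dynamical-equals-arithmetic} as stated.

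For part (2), your route is genuinely different and more hands-on. The paper's argument computes the $f^\ast$-eigenvalues in terms of $\deg(f)$, $\deg(g)$, $\mu_{\min}(\E)$ (Lemma \ref{l:eigenvals of f}), derives the dichotomy $\lambda_1(f)=\lambda_1(g)$ or $\mu_{\min}(\E)=-\mu(\E)$ (Proposition \ref{prop:dichotomy dynamical deg equals base or controlled slope}), and uses the Harder--Narasimhan slope inequality to conclude $\E$ is semistable of degree zero, which over $\PP^1$ forces $\E$ trivial and hence $X\cong\PP^1\times\PP^{n-1}$, reducing to Sano's product theorem; this reduction (Corollary \ref{cor:reduction to case of semistable deg 0}) is valid over any curve $C$, not just $\PP^1$. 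You instead normalize $\E$ via Grothendieck splitting so that $\xi$ is nef and semiample, and then construct canonical heights directly: descending along the scroll morphism to a polarized endomorphism of the singular normal scroll and invoking Call--Silverman for the positivity of $\widehat h_\xi$. This works, and is in a sense more elementary for the case $d>e$, but it leans on $\PP^1$-specific splitting and would not immediately generalize to higher-genus base curves the way the paper's Corollary \ref{cor:reduction to case of semistable deg 0} does. Two remarks: after your normalization $\mu_{\min}(\E)=0$, one in fact always has $a=0$ (the coefficient $c$ in Lemma \ref{l:eigenvals of f} is $(\deg g-d)\mu_{\min}(\E)$), so your $a>0$ sub-case is vacuous; and the claim ``$\phi(P)$ is non-preperiodic because the fibers of $\phi$ have dimension $\le 1$'' is the wrong justification --- the correct one is simply that $\phi$ is birational, so $\phi(P)$ has dense orbit in $\overline X$, hence cannot be preperiodic.

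Overall you have identified the right pressure points --- descent in (1) and the eigendivisor structure in (2) --- and neither argument contains a fatal error, but the part (2) approach is a different proof, while the part (1) proof is the same idea with extra, partly unneeded, machinery.
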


\begin{remark}
\label{rmk:PE-->semistable case}
In the process of showing Theorem \ref{thm:intro-MFS-partial} (\ref{intro-MFS-partial::ratnormalscrolls}), we in fact prove a stronger result: if $C$ is a smooth curve, then Conjecture \ref{conj:main} holds for all surjective endomorphisms of all projective bundles $\PP_C(\E)$ if and only if it holds in the case where $\E$ is semistable of degree 0. See Corollary \ref{cor:reduction to case of semistable deg 0}.
\end{remark}

It is worth mentioning that in Section \ref{preliminaries}, we prove general results concerning the following set-up: $\pi\colon X\to Y$ is a surjective morphism of normal projective varieties over $\QQbar$, $f$ is a surjective endomorphism of $X$, $g$ is a surjective endomorphism of $Y$, and $\pi\circ f=g\circ\pi$. We give several criteria by which one can reduce the conjecture for $(X,f)$ to that of $(Y,g)$, see Theorem \ref{thm:fibered-over-dim-one-less}. 

\section*{Acknowledgments}
We are grateful to Lawrence Ein and Mihai Fulger for useful remarks.

\section{Interplay between Conjecture \ref{conj:main}, fibrations, and birational maps}
\label{preliminaries}
We first recall the main defintions.   We work throughout over \(\QQbar\),
and where not otherwise stipulated, a variety is assumed to be defined over \(\QQbar\).
Given a projective variety \(X\), we write \(N^1(X)\) for the N\'eron--Severi group and \(N^1(X)_\RR = N^1(X) \otimes \RR\) for the corresponding finite-dimensional \(\RR\)-vector space.  We use \(\sim\) for the relation of linear equivalence of Cartier divisors, \(\sim_{\RR}\) for \(\RR\)-linear equivalence, and \(\equiv\) for numerical equivalence. Rational maps are denoted by ``$\rat$'' and morphisms by ``$\to$''.

Suppose that \(f : X \rat X\) is a dominant rational map of a smooth projective variety, and fix an ample divisor \(H\) on \(X\).
As discussed in the introduction, the first dynamical degree is \[\lambda_1(f) = \lim_{n \to \infty} \left( (f^n)^\ast H \cdot H^{\dim X-1} \right)^{1/n}.\] In general, the limit is difficult to compute, since \((f^n)^\ast\) does not necessarily coincide with \((f^\ast)^n\) for rational maps.  However, if \(f : X \to X\) is a morphism, then \((f^n)^\ast = (f^\ast)^n\) and
\[
\lambda_1(f) = \operatorname{SpecRad}(f^\ast : N^1(X)_\RR \to N^1(X)_\RR)
\]
is simply the spectral radius of \(f^\ast\), i.e.~the absolute value of the largest eigenvalue of $f^\ast$ acting on $N^1(X)_\RR$.  When \(f\) is a morphism, we may also drop the smoothness hypothesis on \(X\), and it suffices to assume that \(X\) is normal: there is no difficulty in pulling back Cartier divisors.

Invariant fibrations play an important role in the study of rational maps in higher dimension, and the product formula of Dinh, Nguy\^en, and Truong~\cite{dinhnguyentruong} is useful in dealing with their dynamical degrees.  Suppose that there exists a surjective morphism \(\pi : X \to Y\) and a dominant rational map \(g : Y \rat Y\) with \(g \circ \pi = \pi \circ f\).  Let \(H'\) be an ample divisor on \(Y\).
\[
\xymatrix{
X \ar[d]^\pi \ar@{-->}[r]^-{f} & X \ar[d]^\pi \\
Y \ar@{-->}[r]_-{g} & Y
}
\]

\begin{definition}
The \emph{first dynamical degree of} \(f\) \emph{relative to} \(\pi\) is the limit
\[
\lambda_1(\pi \vert_f) = \lim_{n \to \infty} \left( (f^n)^\ast H \cdot \pi^\ast (H'^{\dim Y}) \cdot H^{\dim X - \dim Y - 1} \right)^{1/n}.
\]
\end{definition}

The definition of relative dynamical degrees can be extended even to the setting in which \(\pi\) itself is only a dominant rational map, although we will not require it. 
The basic properties of dynamical degrees and their relative counterparts are worked out in \cite{dinhsibony, dinhsibonygreen, dinhnguyen, dinhnguyentruong}; a more algebro-geometric perspective (which, importantly, works on normal varieties) can be found in \cite{rat-maps-normal-vars, truongreldyn}.  The next theorem singles out some properties of the dynamical degrees that we will require.

\begin{theorem}
\label{dyndegstuff}
\leavevmode
\begin{enumerate}
\item\label{dyndegstuff::1} Suppose that \(f : X \rat X\) is birational.  Then \(\lambda_1(f^{-1}) = \lambda_{\dim X - 1}(f)\). Furthermore, if \(\lambda_1(f) > 1\), then \(\lambda_1(f^{-1}) > 1\).
\item\label{dyndegstuff::2} If \(f : X \rat X\) admits an invariant fibration \(\pi : X \to Y\) as above, then \(\lambda_0(f \vert_\pi) = 1\) and
\(
\lambda_1(f)=\max\{\lambda_1(g) , \lambda_1(f\vert_\pi)\}
\).
\item\label{dyndegstuff::3} If \(\dim Y = \dim X - 1\) and \(f\) is birational, then \(g : Y \rat Y\) is birational and \(\lambda_1(f\vert_\pi) = 1\).

\item\label{dyndegstuff::4} Let $f$ (resp.~$g$) be a surjective endomorphism of $X$ (resp.~$Y$) and assume that $X$ and $Y$ are normal projective varieties. If $\pi\colon X\to Y$ is a birational morphism such that $\pi\circ f=g\circ\pi$, then $\lambda_1(f)=\lambda_1(g)$.

\end{enumerate}
\end{theorem}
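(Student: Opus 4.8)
The plan is to reduce everything to the spectral-radius description of the first dynamical degree: since $f$ and $g$ are surjective endomorphisms of normal projective varieties, $\lambda_1(f)=\operatorname{SpecRad}\bigl(f^\ast\colon N^1(X)_\RR\to N^1(X)_\RR\bigr)$ and likewise $\lambda_1(g)=\operatorname{SpecRad}(g^\ast\colon N^1(Y)_\RR\to N^1(Y)_\RR)$, and then to prove the two inequalities $\lambda_1(g)\le\lambda_1(f)$ and $\lambda_1(f)\le\lambda_1(g)$ separately. For the first, pull back Cartier divisor classes through $\pi\circ f=g\circ\pi$ to get $f^\ast\circ\pi^\ast=\pi^\ast\circ g^\ast$ on N\'eron--Severi groups. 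Because $\pi$ is a birational morphism, the projection formula gives $\pi_\ast\circ\pi^\ast=\id$ on $N^1(Y)_\RR$, so $\pi^\ast$ is injective and $V:=\pi^\ast N^1(Y)_\RR$ is an $f^\ast$-invariant subspace of $N^1(X)_\RR$ on which $f^\ast$ is conjugate to $g^\ast$; hence $\lambda_1(g)=\operatorname{SpecRad}(f^\ast|_V)\le\operatorname{SpecRad}(f^\ast)=\lambda_1(f)$.

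For the reverse inequality, put $n=\dim X=\dim Y$, fix an ample divisor $H'$ on $Y$, and set $D=\pi^\ast H'$. Then $D$ is nef, and it is big since it is the pullback of an ample class along a birational morphism (equivalently $D^{n}=(H')^{n}>0$); thus $mD\sim H+E$ for some $m>0$ and effective divisor $E$, where $H$ is the ample divisor on $X$ used to compute $\lambda_1(f)$. Since pullbacks of nef classes along morphisms are nef, each $(f^k)^\ast D$ is nef, and intersecting the effective classes $mD-H\sim E$ and their iterates $(f^k)^\ast E$ against suitable products of nef classes — telescoping the factors of $H$ into factors of $mD$ one at a time — yields
\[
(f^k)^\ast H\cdot H^{\,n-1}\ \le\ m\,(f^k)^\ast D\cdot H^{\,n-1}\ \le\ m^{\,n}\,(f^k)^\ast D\cdot D^{\,n-1}.
\]
Finally $(f^k)^\ast D=(f^k)^\ast\pi^\ast H'=\pi^\ast (g^k)^\ast H'$, so by the projection formula (recall $\deg\pi=1$) the right-hand side equals $m^{\,n}\,(g^k)^\ast H'\cdot (H')^{\,n-1}$; taking $k$-th roots and letting $k\to\infty$ gives $\lambda_1(f)\le\lambda_1(g)$, and combining the two inequalities finishes the proof.

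The only point requiring care is the chain of inequalities in the display: one must check that an effective divisor intersected with $n-1$ nef classes is nonnegative, and then peel off the powers of $H$ and of $D$ individually, using at each stage that $(f^k)^\ast D$, $H$, and $D$ are all nef. Alternatively, and perhaps more conceptually, one can invoke the product formula of Dinh--Nguy\^en--Truong: a birational morphism is in particular generically finite, so all of its relative dynamical degrees equal $1$ and the product formula collapses to $\lambda_1(f)=\lambda_1(g)$; but the elementary argument above is self-contained given the material already recalled, and fits the morphism setting at hand where the spectral-radius description is available.
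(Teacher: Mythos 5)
Your proposal addresses only part~(\ref{dyndegstuff::4}) of the four-part theorem. Parts~(\ref{dyndegstuff::1}), (\ref{dyndegstuff::2}), and (\ref{dyndegstuff::3}) are untouched, and your opening plan — ``reduce everything to the spectral-radius description'' — cannot reach them. Part~(\ref{dyndegstuff::1}) is proved in the paper via log-concavity of the sequence $\lambda_p(f)$ (so that $\lambda_1(f)>1$ forces $\lambda_p(f)>1$ for all $1\le p<\dim X$, whence $\lambda_1(f^{-1})=\lambda_{\dim X-1}(f)>1$), and parts~(\ref{dyndegstuff::2}) and (\ref{dyndegstuff::3}) both come from the Dinh--Nguy\^en--Truong product formula $\lambda_k(f)=\max_j\lambda_j(g)\lambda_{k-j}(f\vert_\pi)$. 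These are statements about the higher dynamical degrees $\lambda_p$, which live on $N^p$, not $N^1$, and which have no spectral-radius description when $f$ is merely rational — $(f^n)^\ast\ne(f^\ast)^n$ in general. So a genuine gap: three of the four assertions have no proof.

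For part~(\ref{dyndegstuff::4}) itself, your argument is correct and in fact more explicit than the paper's, which simply cites \cite[Theorem 1.(2)]{rat-maps-normal-vars}. The inequality $\lambda_1(g)\le\lambda_1(f)$ via the $f^\ast$-invariant subspace $\pi^\ast N^1(Y)_\RR$ is clean, and the reverse inequality via Kodaira's lemma ($mD\sim H+E$ with $D=\pi^\ast H'$ big and nef, $E$ effective) together with the telescoping comparison of $(f^k)^\ast H\cdot H^{n-1}$ against $m^n(f^k)^\ast D\cdot D^{n-1}=m^n(g^k)^\ast H'\cdot(H')^{n-1}$ (projection formula, $\deg\pi=1$) is a standard and sound intersection-theoretic device; the constant $m^n$ disappears after the $k$-th root and limit. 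So you have a nice self-contained proof of (\ref{dyndegstuff::4}), but the theorem as a whole is unproved.
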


\begin{proof}
To prove these claims requires using the properties of higher dynamical degrees \(\lambda_p(f)\); since we do not otherwise make use of these degrees, we refer to the above references for the definitions.

The first fact follows from the log-concavity of dynamical degrees, which states that \(\lambda_{p-1}(f) \lambda_{p+1}(f) \leq \lambda_p(f)^2\) for each \(1 \leq p \leq \dim X - 1\).  Since \(\lambda_p(f) \geq 1\) for any \(p\), the hypothesis that \(\lambda_1(f) > 1\) implies that \(\lambda_p(f) > 1\) for each \(1\leq p < \dim X\).  If \(f\) is birational, then \(\lambda_1(f^{-1}) = \lambda_{\dim X - 1}(f) > 1\).

The claim in (\ref{dyndegstuff::2}) that \(\lambda_0(f\vert_\pi) = 1\) follows directly from the definition, while $\lambda_1(f)=\max\{\lambda_1(g)\lambda_0(f\vert_\pi),\lambda_0(g)\lambda_1(f\vert_\pi)\} =\max\{\lambda_1(g) , \lambda_1(f\vert_\pi)\}$ is a case of the product formula of Dinh--Nguy\^en--Truong.

For (\ref{dyndegstuff::3}), another application of the product formula yields \(\lambda_{\dim X}(f) = \lambda_{\dim X - 1}(g) \lambda_1(f\vert_\pi)\).  Since \(f\) is birational, \(\lambda_{\dim X}(f) = 1\), and so both terms on the right must be \(1\) as well.

Finally, (\ref{dyndegstuff::4}) follows from \cite[Theorem 1.(2)]{rat-maps-normal-vars} and the discussion that follows.
\end{proof}

In contrast to the dynamical degrees, the properties of the arithmetic degrees $\overline{\alpha}_f(P)$, $\underline{\alpha}_f(P)$, and $\alpha_f(P)$ are at present largely conjectural in general. 
There is nevertheless a close relationship between the arithmetic and dynamical degrees: it was shown in~\cite[Corollary 9.3]{mss} that if \(f : X \to X\) is a surjective endomorphism with \(\lambda_1(f) > 1\), then there exist points \(P\) with \(\alpha_f(P) = \lambda_1(f)\); however, it remains open whether this equality holds for \emph{every} point $P$ with dense orbit.

\begin{remark}
\label{rmk:lambda1 is 1}
It was proved by Kawaguchi--Silverman \cite[Theorem 4]{dynamical-arithmetic-rat-maps} and Matsuzawa \cite[Theorem 1.4]{on-upper-bounds-arithmetic-degrees} that \(\overline{\alpha}_f(P) \leq \lambda_1(f)\) in general. As a result, the limit defining $\alpha_f(P)$ exists and is equal to $\lambda_1(f)$ if and only if \(\lambda_1(f)\leq \underline{\alpha}_f(P)\); indeed, if this inequality holds, then \(\lambda_1(f) \leq \underline{\alpha}_f(P) \leq \overline{\alpha}_f(P) \leq \lambda_1(f)\).

Furthermore, since we always have \(1 \leq \underline{\alpha}_f(P)\), if \(\lambda_1(f) = 1\), then \(\lambda_1(f) \leq \underline{\alpha}_f(P)\), and so the conjecture holds.  Hence we can always restrict our attention to maps with \(\lambda_1(f) > 1\).
\end{remark}

\label{sec:speculative-results}

We next collect some results in the following general situation: suppose that \(f : X \to X\) is surjective, and there are morphisms \(\pi : X \to Y\) and \(g : Y \to Y\) with \(\pi \circ f = g \circ \pi\).  Under these circumstances, we are in some cases able to reduce the conjecture for \(f\) to the conjecture for \(g\).
There are a number of natural fibrations \(\pi : X \rat Y\) to which one might hope to apply these results on a given variety \(X\), e.g.~the canonical model, the albanese map, Mori fiber spaces, and the mrc quotient. Such canonically defined fibrations play a fundamental role in the study of self-maps of higher-dimensional varieties~\cite{dynamics-auts-zhang}. Recall, as stated in the introduction, that for a regular morphism $f$ and a point $P$ with dense orbit, the limit defining $\alpha_f(P)$ exists, i.e.~$\underline{\alpha}_f(P)=\overline{\alpha}_f(P)$.

\begin{lemma}
\label{l:semi-ample-ht}
Assume that $X$ and $Y$ are normal projective varieties over \(\QQbar\) and let $f$ (resp.~$g$) be a surjective endomorphism of $X$ (resp.~$Y$). If $\pi\colon X\to Y$ is a surjection such that $\pi\circ f=g\circ\pi$ and $P\in X(\QQbar)$ has dense orbit under $f$, then $\alpha_f(P) \geq \alpha_g(\pi(P))$. 
Moreover, if $\pi$ is birational and \(X\) and \(Y\) are \(\QQ\)-factorial, then $\alpha_f(P) = \alpha_g(\pi(P))$.
\end{lemma}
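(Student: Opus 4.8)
The plan is to compare heights upstairs and downstairs using functoriality of Weil heights. First I would fix an ample divisor $H'$ on $Y$; then $\pi^\ast H'$ is effective and $\pi$-relatively trivial, but more to the point it is nef on $X$, and for any ample $H$ on $X$ there is an integer $m$ with $mH - \pi^\ast H'$ effective (indeed ample). By functoriality of the height machine, $h_{H'}(\pi(Q)) = h_{\pi^\ast H'}(Q) + O(1)$ for all $Q \in X(\QQbar)$, and since $mH - \pi^\ast H'$ is ample (hence its associated height is bounded below), we get $h_{\pi^\ast H'}(Q) \leq m\, h_H(Q) + O(1)$. Applying this with $Q = f^n(P)$ and using $\pi(f^n(P)) = g^n(\pi(P))$ gives
\[
h^+_{H'}(g^n(\pi(P))) \leq m\, h^+_H(f^n(P)) + O(1).
\]
Taking $n$-th roots and $\limsup$, the additive constant and the factor $m$ wash out, yielding $\overline{\alpha}_g(\pi(P)) \leq \overline{\alpha}_f(P)$. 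Now since $f$ is an endomorphism and $P$ has dense orbit, $\alpha_f(P)$ exists; and $\pi(P)$ has dense orbit under $g$ (the closure of the $g$-orbit of $\pi(P)$ contains $\pi$ of the $f$-orbit of $P$, which is dense, so its closure is all of $Y$), so $\alpha_g(\pi(P))$ exists as well. Hence $\alpha_g(\pi(P)) = \overline{\alpha}_g(\pi(P)) \leq \overline{\alpha}_f(P) = \alpha_f(P)$, which is the first assertion.

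For the second assertion, suppose $\pi$ is birational and $X$, $Y$ are $\QQ$-factorial. It suffices to prove the reverse inequality $\alpha_f(P) \leq \alpha_g(\pi(P))$. Pick an ample divisor $H'$ on $Y$; then there is an effective $\pi$-exceptional divisor $E$ on $X$ and a positive integer $m$ such that $m\pi^\ast H' - H$ is effective, where $H$ is our chosen ample divisor on $X$ — this is the standard fact that an ample divisor on the base, pulled back, dominates any fixed divisor on $X$ modulo the exceptional locus, available because $\pi$ is a birational morphism and $X$ is $\QQ$-factorial so every divisor is $\QQ$-Cartier. Concretely: choose ample $A$ on $Y$ with $\pi^\ast A - H$ effective after adding a suitable exceptional divisor, or simply use that $\pi_\ast H$ is a divisor on $Y$ and compare $H$ with $\pi^\ast(\text{something ample})$ up to exceptional terms. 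Since heights of effective divisors are bounded below off the base locus, and the orbit of $P$ meets the complement of the exceptional locus infinitely often (in fact, since the orbit is dense and the exceptional locus is a proper closed subset, all but finitely many $f^n(P)$ lie outside it — using that $f$-periodic behavior forced into the exceptional locus would contradict density), we get
\[
h^+_H(f^n(P)) \leq m\, h^+_{\pi^\ast H'}(f^n(P)) + O(1) = m\, h^+_{H'}(g^n(\pi(P))) + O(1)
\]
for all but finitely many $n$. Taking $n$-th roots and $\limsup$ gives $\overline{\alpha}_f(P) \leq \overline{\alpha}_g(\pi(P))$, hence equality throughout.

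The main obstacle is the bookkeeping around the exceptional locus in the birational case: one must ensure the bound $h_H \leq m\, h_{\pi^\ast H'} + O(1)$ holds along the sequence $f^n(P)$, which requires knowing these points avoid the $\pi$-exceptional set for $n \gg 0$. This follows because the exceptional locus is a proper closed $g$-... no, it is not $f$-invariant in general, but the complement $U = X \setminus \operatorname{Exc}(\pi)$ has the property that $\pi|_U$ is an isomorphism onto its image, and the key point is simply that $\{f^n(P)\}$ is not contained in any proper closed subset — though individual terms could land in $\operatorname{Exc}(\pi)$, one argues via the $\QQ$-factoriality and negativity lemma that $H - \pi^\ast(\pi_\ast H)$ is a $\pi$-exceptional $\RR$-divisor that is anti-effective (or handled by Theorem \ref{dyndegstuff}(\ref{dyndegstuff::4}) giving $\lambda_1(f) = \lambda_1(g)$ as a sanity check), so the height comparison is in fact a genuine $O(1)$ after choosing $H = \pi^\ast H'' + (\text{exceptional})$; alternatively, invoke \cite[Proposition 12]{dynamical-arithmetic-rat-maps} that arithmetic degrees are independent of the polarization together with a careful choice making the comparison exact. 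Once the correct polarization on $X$ is chosen to be pulled back from $Y$ plus an exceptional correction, both inequalities become immediate from functoriality of heights.
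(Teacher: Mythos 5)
Your proof of the first part is correct and is essentially the same route as the paper's: you unpack the cited \cite[Remark 2.2]{mss} into an explicit height comparison $h_{\pi^\ast H'} \leq m\,h_H + O(1)$ using that $mH - \pi^\ast H'$ is ample for $m$ large.

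Your proof of the second part has a genuine gap that you flag but do not close. The negativity-lemma input is correct: with $X,Y$ normal and $\QQ$-factorial, one gets an effective $\pi$-exceptional divisor $E$ with $h_H(Q) \leq h_{\pi_\ast H}(\pi(Q)) - h_E(Q) + O(1)$, and the positivity axiom of the Weil height machine gives $h_E(Q) \geq O(1)$ only for $Q$ outside the base locus of $E$, which sits inside $\operatorname{Supp}(E)$. Density of the $f$-orbit of $P$ ensures the orbit is not \emph{contained} in $\operatorname{Supp}(E)$, but it does not prevent the orbit from returning to $\operatorname{Supp}(E)$ infinitely often, and there $h_E$ can be arbitrarily negative. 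You never establish the ``for $n \gg 0$'' avoidance you appeal to, and it is false in general: $\operatorname{Exc}(\pi)$ need not be $f$-invariant nor avoided by the orbit. Your closing hedges (``one argues via the $\QQ$-factoriality\ldots'', ``alternatively invoke\ldots'', ``both inequalities become immediate'') do not supply a substitute argument. It is worth noting that the paper itself does not carry out this step directly either: it delegates the birational case to the proofs of Lemma 3.3 and Theorem 3.4(ii) of \cite{mss}, and only spells out the adjustments needed to pass from smooth to normal $\QQ$-factorial varieties (negativity lemma, Weil heights for Cartier divisors on singular varieties, and $\QQ$-factoriality to form $\pi^\ast\pi_\ast H$). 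To make your proof self-contained, you would need to reproduce and adapt the more careful height estimate from \cite{mss}, in particular the part that handles orbit points landing on the exceptional locus.
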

\begin{proof}
We first show $\alpha_f(P) \geq \alpha_g(\pi(P))$. Let $H$ be an ample Cartier divisor on $Y$. Since $P$ has dense orbit under $f$, it follows that $\pi(P)$ has dense orbit under $g$. So, the limit defining $\alpha_g(\pi(P))$ exists and we have 
\[
\alpha_g(\pi(P))=\lim_{n\to\infty} h_H^+(g^n(\pi(P)))^{1/n}=\lim_{n\to\infty} h_{\pi^*H}^+(f^n(P))^{1/n}.
\]
By \cite[Remark 2.2]{mss} (cf. the proof of \cite[Proposition 12]{dynamical-arithmetic-rat-maps}), we obtain
\[
\alpha_f(P)=\overline{\alpha}_f(P)\geq\limsup_{n\to\infty} h_{\pi^*H}^+(f^n(P))^{1/n}=\alpha_g(\pi(P)).
\]
The cited references are formulated under the hypothesis that \(X\) is smooth, so that a dominant rational map induces a pullback map $\phi^*$ on $N^1(X)$.  However, since we assume that \(f\) and \(g\) are regular morphisms, there are no difficulties associated with repeatedly pulling back Cartier divisors, and the same arguments go through on any normal projective variety (see \cite[Remarks 8 and 20]{dynamical-arithmetic-rat-maps}).

It remains to handle the case where $\pi$ is birational. This follows from the proofs of Lemma 3.3 and Theorem 3.4 (ii) in \cite{mss}.  The statement is again formulated under a smoothness hypothesis, but it suffices to assume that \(X\) and \(Y\) are normal and \(\QQ\)-factorial. The negativity lemma holds as long as \(X\) and \(Y\) are normal, and the Weil height machine (\cite[Theorem B.3.2]{hindry-silverman}) remains valid for Cartier divisors on singular varieties by \cite[Remark B.3.2.1]{hindry-silverman}. The \(\QQ\)-factoriality assumption is needed so that the definition of the divisor $E$ in~\cite[Proof of Lemma 3.3]{mss} makes sense: to form \(p^\ast p_\ast q^\ast H_Y\), we must be able to pull back a Weil divisor.
\end{proof}

\begin{corollary}
\label{cor:conj-birational}
Assume that $X$ and $Y$ are normal, \(\QQ\)-factorial projective varieties over \(\QQbar\), and let $f$ (resp.~$g$) be a surjective endomorphism of $X$ (resp.~$Y$). If $\pi\colon X\to Y$ is a birational morphism such that $\pi\circ f=g\circ\pi$, then Conjecture \ref{conj:main} holds for $(X,f)$ if and only if it holds for $(Y,g)$.
\end{corollary}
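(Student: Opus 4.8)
The plan is to combine the two ingredients already in hand: Lemma~\ref{l:semi-ample-ht}, which in the birational case (with $X$, $Y$ normal and $\QQ$-factorial) gives $\alpha_f(P) = \alpha_g(\pi(P))$ whenever $P$ has dense orbit, and Theorem~\ref{dyndegstuff}(\ref{dyndegstuff::4}), which gives $\lambda_1(f) = \lambda_1(g)$. The only real content is to match up the points with Zariski-dense orbit on the two sides. I may assume $\dim X = \dim Y \geq 1$, since otherwise both statements are trivial, and I recall that $X$ and $Y$ are irreducible and that $\pi$, being a morphism of projective varieties, is proper (and surjective, being birational).

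First I would treat the direction ``$(Y,g)\Rightarrow(X,f)$''. Let $P\in X(\QQbar)$ have dense $f$-orbit. Since $\pi$ is proper and $\pi\circ f = g\circ\pi$, it maps the $f$-orbit of $P$ onto the $g$-orbit of $\pi(P)$, and the image of a dense subset under a proper (dominant) morphism is dense; hence $\pi(P)$ has dense $g$-orbit. Assuming the conjecture for $(Y,g)$, we get $\alpha_g(\pi(P)) = \lambda_1(g)$, and then Lemma~\ref{l:semi-ample-ht} together with Theorem~\ref{dyndegstuff}(\ref{dyndegstuff::4}) yields
\[
\alpha_f(P) = \alpha_g(\pi(P)) = \lambda_1(g) = \lambda_1(f),
\]
so the conjecture holds for $(X,f)$.

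The reverse direction ``$(X,f)\Rightarrow(Y,g)$'' is where a little care is needed, because a dense-orbit point $Q$ on $Y$ need not lie over the locus where $\pi$ is an isomorphism. Let $W\subseteq Y$ be the (proper, closed) image of the exceptional locus of $\pi$, so that $\pi$ restricts to an isomorphism over the nonempty open set $Y\setminus W$. Given $Q$ with dense $g$-orbit, that orbit is not contained in $W$, so $g^m(Q)\in Y\setminus W$ for some $m\geq 0$; moreover $g^m(Q)$ again has dense orbit, since $Y$ is the union of the closure of the orbit of $g^m(Q)$ and the finite set $\{Q,\dots,g^{m-1}(Q)\}$, which forces the former to be all of the irreducible variety $Y$. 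Let $P'\in X(\QQbar)$ be the unique preimage of $g^m(Q)$ off the exceptional locus. Then $\pi$ maps the $f$-orbit of $P'$ onto the $g$-orbit of $g^m(Q)$, so the image of $\overline{\{f^n(P')\}}$ under the proper map $\pi$ is a closed subset of $Y$ containing a dense set, hence equal to $Y$; as $X$ is irreducible of the same dimension, $P'$ has dense $f$-orbit. Applying the conjecture for $(X,f)$, then Lemma~\ref{l:semi-ample-ht}, then the invariance of the arithmetic degree under iteration ($\alpha_g(g^m(Q)) = \alpha_g(Q)$), and finally Theorem~\ref{dyndegstuff}(\ref{dyndegstuff::4}), we get
\[
\lambda_1(g) = \lambda_1(f) = \alpha_f(P') = \alpha_g(g^m(Q)) = \alpha_g(Q),
\]
which is exactly the conjecture for $(Y,g)$.

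The only step that is not purely formal is producing the point $P'$ with dense $f$-orbit in the second direction; the main obstacle there is simply the bookkeeping with the exceptional locus $W$ and the shift-invariance $\alpha_g(g^m(Q)) = \alpha_g(Q)$, both of which are standard. Everything else is a direct substitution into Lemma~\ref{l:semi-ample-ht} and Theorem~\ref{dyndegstuff}(\ref{dyndegstuff::4}).
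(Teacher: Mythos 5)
Your proof is correct, and both directions do ultimately reduce to the same two inputs (Lemma~\ref{l:semi-ample-ht} and Theorem~\ref{dyndegstuff}(\ref{dyndegstuff::4})), but the way you match up dense-orbit points in the direction ``$(X,f)\Rightarrow(Y,g)$'' is genuinely different from the paper's. The paper proves a clean biconditional for a \emph{fixed} pair $(P,\pi(P))$: if $\pi(P)$ has dense $g$-orbit then $P$ itself already has dense $f$-orbit, regardless of whether $P$ sits in the exceptional locus. (The key point is to choose $U=\pi^{-1}(U')$ for an open $U'\subseteq Y$ over which $\pi$ is an isomorphism; then for any open $V\subseteq X$, the set $\pi(V\cap U)$ is open in $Y$, meets the orbit of $\pi(P)$, and the unique preimage of the relevant $g^n(\pi(P))$ inside $U$ must be $f^n(P)$.) With that biconditional in hand, one takes an arbitrary preimage of $Q$ and is done; no shift of the point and no shift-invariance of $\alpha_g$ is needed, and the ``if and only if'' of the statement falls out symmetrically. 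Your alternative sidesteps that observation by manufacturing a preimage $P'$ of some iterate $g^m(Q)$ lying outside the exceptional locus and then invoking $\alpha_g(g^m(Q))=\alpha_g(Q)$; this is perfectly correct (the shift-invariance is standard, and your dimension-counting argument that $P'$ has dense orbit works), but it costs you an extra ingredient and a small irreducibility bookkeeping step that the paper's route avoids. Neither approach is wrong --- the paper's is just a bit tighter and makes the symmetry of the equivalence more transparent.
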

\begin{proof}
Let $P\in X(\QQbar)$. Then $P$ has dense orbit under $f$ if and only if $\pi(P)$ has dense orbit under $g$. Indeed, since $\pi$ is surjective, it is clear that density of the $f$-orbit of $P$ implies density of the $g$-orbit of $\pi(P)$. Conversely, suppose the $g$-orbit of $\pi(P)$ is dense and let $U\subset X$ be a dense open subset where $\pi\vert_U$ is an isomorphism. Given any open $V\subset X$, we see $V\cap U\neq\varnothing$ and so $\pi(V\cap U)$ contains some $g^n(\pi(P))$. Thus, $V\cap U$ contains $f^n(P)$, proving density of the $f$-orbit of $P$.

To finish the proof, note that Lemma \ref{l:semi-ample-ht} and Theorem \ref{dyndegstuff} (\ref{dyndegstuff::4}) tell us $\alpha_f(P)=\alpha_g(\pi(P))$ and $\lambda_1(f)=\lambda_1(g)$. So, $\alpha_f(P)=\lambda_1(f)$ if and only if $\alpha_g(\pi(P))=\lambda_1(g)$.
\end{proof}

Combining Corollary \ref{cor:conj-birational} with \cite[Theorem 10]{dynamical-equals-arithmetic} yields the following result.

\begin{corollary}
\label{cor:normal-proj-surfaces}
Let $X$ be a normal, \(\QQ\)-factorial projective surface over $\QQbar$. If $f$ is an automorphism of $X$, then Conjecture \ref{conj:main} holds for $(X,f)$.
\end{corollary}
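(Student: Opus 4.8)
The plan is to reduce to the already-known case of automorphisms of \emph{smooth} projective surfaces by means of Corollary \ref{cor:conj-birational}, using an $f$-equivariant resolution of singularities. For surfaces such a resolution is available essentially for free: every normal projective surface $X$ admits a minimal resolution $\mu \colon Y \to X$ with $Y$ a smooth projective surface, and this minimal resolution is unique (it is characterized, for instance, by the condition that $K_Y$ be $\mu$-nef, equivalently that $\mu$ contract no $(-1)$-curve). Uniqueness makes it functorial with respect to isomorphisms, so applying it to the isomorphism $f \colon X \xrightarrow{\sim} X$ yields a (unique) automorphism $\widetilde f \colon Y \to Y$ with $\mu \circ \widetilde f = f \circ \mu$. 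Note that $Y$ is projective since $\mu$ is a projective birational morphism and $X$ is projective.

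First I would invoke \cite[Theorem 10]{dynamical-equals-arithmetic}, which establishes Conjecture \ref{conj:main} for every automorphism of a smooth projective surface; applied to the pair $(Y,\widetilde f)$, it gives $\alpha_{\widetilde f}(Q) = \lambda_1(\widetilde f)$ for every $Q \in Y(\QQbar)$ with dense $\widetilde f$-orbit. Then I would feed this into Corollary \ref{cor:conj-birational}, whose hypotheses are all met: $Y$ is smooth, hence normal and $\QQ$-factorial; $X$ is normal and $\QQ$-factorial by assumption; $\widetilde f$ and $f$ are surjective endomorphisms; and $\mu$ is a birational morphism intertwining them. Therefore Conjecture \ref{conj:main} holds for $(X,f)$ if and only if it holds for $(Y,\widetilde f)$, and the latter has just been established.

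The only point requiring any care is the equivariance of the resolution, i.e.\ that $f$ genuinely lifts to an automorphism of a smooth projective model; this is precisely where the restriction to surfaces is used, since both the existence and the uniqueness of the minimal resolution are special to dimension two. (In higher dimension one would instead appeal to functorial resolution of singularities in characteristic zero, but that is not needed here.) Everything else — the comparison of heights and of dynamical degrees under the birational morphism $\mu$, including the subtleties about pulling back Weil divisors on the $\QQ$-factorial varieties $X$ and $Y$ and the validity of the Weil height machine on singular varieties — is already packaged into Corollary \ref{cor:conj-birational}, Lemma \ref{l:semi-ample-ht}, and Theorem \ref{dyndegstuff} (\ref{dyndegstuff::4}), so no new work is involved. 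I do not anticipate a genuine obstacle; the content of the corollary is the reduction step, which the surface theory supplies cleanly.
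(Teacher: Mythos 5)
Your proposal is essentially identical to the paper's own argument: both pass to the minimal resolution (using its uniqueness/functoriality to lift $f$ to an automorphism of the smooth model, where the paper cites Matsuki for this lift), invoke the Kawaguchi--Silverman result for automorphisms of smooth projective surfaces, and then descend via Corollary~\ref{cor:conj-birational}. No meaningful difference in approach.
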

\begin{proof}
Let $\pi\colon\widetilde{X}\to X$ be the minimal resolution. By \cite[Theorem 4-6-2(i)]{matsuki-mmp}, there exists an automorphism $\widetilde{f}$ of $\widetilde{X}$ such that $\eta\circ\widetilde{f}=f\circ\eta$. By \cite[Theorem 2(c)]{dynamical-equals-arithmetic}, Conjecture \ref{conj:main} is known for $(\widetilde{X},\widetilde{f})$ and hence also known for $(X,f)$ by Corollary \ref{cor:conj-birational}.
\end{proof}

% \begin{remark}
% In Corollary \ref{cor:normal-proj-surfaces} we are able to handle the case of \(\QQ\)-factorial surfaces $X$ and automorphisms $f$ due to the fact that automorphisms of surfaces lift to automorphisms of their minimal resolutions. If $f$ is merely an endomorphism, as opposed to an automorphism, then it is not always possible to conjugate $f$ to a regular endomorphism of a smooth model, and Conjecture~\ref{conj:main} does not seem to be known for normal surfaces in general.
% \end{remark}

\begin{theorem}
\label{thm:fibered-over-dim-one-less}
Let $\pi\colon X\to Y$ be a surjective morphism of normal projective varieties over $\QQbar$.
Suppose $f$ (resp.~$g$) is a surjective endomorphism of $X$ (resp.~$Y$) such that $g\circ\pi = \pi\circ f$. If $\lambda_1(f\vert_\pi) \leq \lambda_1(g)$ 
and Conjecture \ref{conj:main} holds for $(Y,g)$, then Conjecture \ref{conj:main} also holds for $(X,f)$.
The condition \(\lambda_1(f\vert_\pi) \leq \lambda_1(g)\) holds in particular if \(f\) is birational and \(\dim Y = \dim X - 1\).
\end{theorem}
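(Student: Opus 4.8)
The plan is to read this off from Lemma \ref{l:semi-ample-ht} together with the product formula packaged in Theorem \ref{dyndegstuff}. First, by Theorem \ref{dyndegstuff}(\ref{dyndegstuff::2}) one has $\lambda_1(f) = \max\{\lambda_1(g),\lambda_1(f\vert_\pi)\}$, so the hypothesis $\lambda_1(f\vert_\pi)\le\lambda_1(g)$ forces $\lambda_1(f)=\lambda_1(g)$. (If $\lambda_1(f)=1$ there is nothing to prove, by Remark \ref{rmk:lambda1 is 1}, so one may as well assume $\lambda_1(f)>1$, though the argument below does not actually require this.)

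Next, fix $P\in X(\QQbar)$ with Zariski-dense $f$-orbit. Since $\pi$ is surjective, the image under $\pi$ of the (dense) $f$-orbit of $P$ is dense in $Y$ and coincides with the $g$-orbit of $\pi(P)$; hence $\pi(P)$ has dense $g$-orbit. Because Conjecture \ref{conj:main} is assumed for $(Y,g)$, the limit defining $\alpha_g(\pi(P))$ exists and equals $\lambda_1(g)$. Now apply Lemma \ref{l:semi-ample-ht} to the triple $(\pi,f,g)$: it yields $\alpha_f(P)\ge\alpha_g(\pi(P))=\lambda_1(g)=\lambda_1(f)$, where we also use that, for the regular morphism $f$ and the point $P$ with dense orbit, the limit defining the arithmetic degree exists, so $\alpha_f(P)=\overline{\alpha}_f(P)$. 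Combined with the general upper bound $\overline{\alpha}_f(P)\le\lambda_1(f)$ recorded in Remark \ref{rmk:lambda1 is 1}, this forces $\alpha_f(P)=\lambda_1(f)$, i.e.\ Conjecture \ref{conj:main} for $(X,f)$. For the final assertion: if $f$ is birational and $\dim Y=\dim X-1$, then Theorem \ref{dyndegstuff}(\ref{dyndegstuff::3}) gives $\lambda_1(f\vert_\pi)=1$, and since every dynamical degree is at least $1$ the inequality $\lambda_1(f\vert_\pi)\le\lambda_1(g)$ holds automatically.

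I do not expect a genuine obstacle here: all the substantive content has already been isolated, namely the arithmetic-degree comparison of Lemma \ref{l:semi-ample-ht} (whose proof is the place where normality rather than smoothness of $X$ and $Y$ must be accommodated, via the Weil height machine for Cartier divisors on singular varieties) and the Dinh--Nguy\^en--Truong product formula behind Theorem \ref{dyndegstuff}(\ref{dyndegstuff::2}). The only points that require a moment's care are that surjectivity of $\pi$ propagates density of orbits from $X$ to $Y$, and that one really only needs the \emph{inequality} $\lambda_1(f\vert_\pi)\le\lambda_1(g)$ for the product formula to collapse to $\lambda_1(f)=\lambda_1(g)$ — the value $\lambda_1(g)$ may a priori be strictly larger than $\lambda_1(f\vert_\pi)$ with no effect on the argument.
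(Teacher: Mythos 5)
Your proof is correct and follows essentially the same route as the paper: reduce $\lambda_1(f)=\lambda_1(g)$ via the product formula in Theorem \ref{dyndegstuff}(\ref{dyndegstuff::2}), apply Lemma \ref{l:semi-ample-ht} to compare arithmetic degrees, and close the gap with the general upper bound from Remark \ref{rmk:lambda1 is 1}; the final assertion is handled identically via Theorem \ref{dyndegstuff}(\ref{dyndegstuff::3}).
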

\begin{proof}
We begin by showing that $\lambda_1(g)=\lambda_1(f)$.  By Theorem~\ref{dyndegstuff} (\ref{dyndegstuff::2}), we have
\[
\lambda_1(f)=\max\{\lambda_1(g),\lambda_1(f\vert_\pi)\} = \lambda_1(g).
\]

Next, let $P\in X(\QQbar)$ have dense orbit under $f$, so that $\pi(P)$ has dense orbit under $g$. Then by Lemma \ref{l:semi-ample-ht}, we obtain
\[
\alpha_f(P) \geq \alpha_g(\pi(P)) = \lambda_1(g) = \lambda_1(f),
\]
where $\alpha_g(\pi(P))=\lambda_1(g)$ because the conjecture holds for $(Y,g)$. By Remark \ref{rmk:lambda1 is 1}, we then know that $\alpha_f(P)=\lambda_1(f)$. Therefore, the conjecture holds for $(X,f)$.

Theorem \ref{dyndegstuff} (\ref{dyndegstuff::3}) tells us that \(\lambda_1(f\vert_\pi) = 1\) whenever $f$ is birational and \(\dim Y = \dim X - 1\). Since $\lambda_1(g) \geq 1$, the inequality follows.
\end{proof}

The following consequence of Theorem \ref{thm:fibered-over-dim-one-less} is applied in the proofs of Theorem \ref{thm:intro-CY3-aut} (\ref{intro-CY3-aut::semiample}) and Theorem \ref{thm:intro-MFS-partial} (\ref{intro-MFS-partial::auts3fold}).

\begin{corollary}
\label{cor:fibered-over-surface}
Let $\pi\colon X\to Y$ be a surjective morphism of normal projective varieties over $\QQbar$ with $X$ a threefold and $Y$ a \(\QQ\)-factorial surface. Suppose $f$ (resp.~$g$) is an automorphism of $X$ (resp.~$Y$) such that $g\circ\pi = \pi\circ f$. Then Conjecture \ref{conj:main} holds for $(X,f)$.
\end{corollary}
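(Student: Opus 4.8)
The plan is to obtain this as a direct consequence of Theorem~\ref{thm:fibered-over-dim-one-less} and Corollary~\ref{cor:normal-proj-surfaces}. The setup of Corollary~\ref{cor:fibered-over-surface} is exactly that of Theorem~\ref{thm:fibered-over-dim-one-less}: we are given a surjective morphism $\pi\colon X\to Y$ of normal projective varieties over $\QQbar$ together with surjective endomorphisms $f$ of $X$ and $g$ of $Y$ (automorphisms are in particular surjective endomorphisms) satisfying $g\circ\pi=\pi\circ f$. So it suffices to verify the two hypotheses of that theorem, namely $\lambda_1(f\vert_\pi)\leq\lambda_1(g)$ and that Conjecture~\ref{conj:main} holds for $(Y,g)$.

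First I would check the inequality $\lambda_1(f\vert_\pi)\leq\lambda_1(g)$. Since $f$ is an automorphism it is in particular birational, and since $X$ is a threefold and $Y$ is a surface we have $\dim Y = 2 = \dim X - 1$. Hence the final assertion of Theorem~\ref{thm:fibered-over-dim-one-less} — which itself rests on Theorem~\ref{dyndegstuff}~(\ref{dyndegstuff::3}) — applies and gives $\lambda_1(f\vert_\pi)=1\leq\lambda_1(g)$, the last inequality because relative and absolute dynamical degrees are always at least $1$.

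Second I would verify that Conjecture~\ref{conj:main} holds for $(Y,g)$. Here $Y$ is a normal, $\QQ$-factorial projective surface over $\QQbar$ and $g$ is an automorphism of $Y$, so this is precisely the content of Corollary~\ref{cor:normal-proj-surfaces}. With both hypotheses in hand, Theorem~\ref{thm:fibered-over-dim-one-less} yields Conjecture~\ref{conj:main} for $(X,f)$, completing the argument.

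I do not expect a genuine obstacle here: the statement is assembled entirely from results established earlier in this section. The only points that require attention are routine compatibilities of hypotheses — that the word ``surface'' is being used to mean $\dim Y = 2$, so that the dimension drop along $\pi$ is exactly one and Theorem~\ref{dyndegstuff}~(\ref{dyndegstuff::3}) is available; that the normality and $\QQ$-factoriality of $Y$ are exactly what Corollary~\ref{cor:normal-proj-surfaces} requires; and that an automorphism satisfies both the ``birational'' hypothesis needed for the dynamical-degree computation and the ``surjective endomorphism'' hypothesis needed for Theorem~\ref{thm:fibered-over-dim-one-less}.
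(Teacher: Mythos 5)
Your proposal is correct and follows exactly the paper's own argument: invoke Corollary~\ref{cor:normal-proj-surfaces} to settle Conjecture~\ref{conj:main} for $(Y,g)$, then apply the final sentence of Theorem~\ref{thm:fibered-over-dim-one-less} (using that $f$ is birational and $\dim Y = \dim X - 1$) to conclude for $(X,f)$.
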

\begin{proof}
By Corollary \ref{cor:normal-proj-surfaces}, we know Conjecture \ref{conj:main} holds for $(Y,g)$. Since $f$ is birational and $\dim Y = \dim X - 1$, Conjecture \ref{conj:main} for $(X,f)$ follows from Theorem \ref{thm:fibered-over-dim-one-less}.
\end{proof}

\section{Canonical heights for hyper-K\"ahler automorphisms: Theorem \ref{thm:hk}}
\label{sec:HK}
We now treat Conjecture~\ref{conj:main} for hyper-K\"ahler varieties. There are many remarkable automorphisms and birational automorphisms of such varieties, see e.g.~\cite{amerikverbitsky} and \cite{oguiso-hk}. In fact, we prove the conjecture for a wider class of varieties, namely those with trivial Albanese satisfying conditions (A) and (B) of Definition \ref{def:condsAB}. We start by collecting some facts about varieties with numerically trivial canonical class.

\begin{proposition}[{The Beauville--Bogomolov--Fujiki form, see e.g.\ \cite[\S 23]{cybook}, \cite[Prop.\ 25.14]{cybook}}]
\label{bbform}
Suppose that \(X\) is a hyper-K\"ahler variety of dimension \(2m\).  There exists a quadratic form \(q(X)\) on \(H^2(X,\RR)\) and a constant \(c_X\) such that for any divisor \(D\), we have:
\[
D^{2m} = c_X \, q_X(D)^m.
\]
The form \(q_X(-)\) has signature \((1,\rho(X)-1)\) on \(N^1(X)_\RR\).
If \(\phi : X \to X\) is an automorphism, then the pullback \(\phi^\ast\) preserves the form \(q(-)\).
\end{proposition}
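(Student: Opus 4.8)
This proposition is classical, due to Fujiki (in dimension four), Beauville, and Bogomolov, and I would follow the treatment given in the cited references. The plan has four steps: construct $q_X$ from the symplectic form; prove the Fujiki relation $D^{2m}=c_X q_X(D)^m$ by a deformation argument; read off the signature from Hodge theory; and deduce $\phi^\ast$-invariance from functoriality of the cup product. For the construction, fix the symplectic generator $\sigma$ of $H^0(X,\Omega^2_X)$, normalized so that $\int_X(\sigma\bar\sigma)^m=1$, and set
\[
q_X(\alpha)=\tfrac m2\int_X\alpha^2(\sigma\bar\sigma)^{m-1}+(1-m)\Bigl(\int_X\alpha\,\sigma^{m-1}\bar\sigma^{m}\Bigr)\Bigl(\int_X\alpha\,\sigma^{m}\bar\sigma^{m-1}\Bigr)
\]
for $\alpha\in H^2(X,\RR)$, extended $\CC$-bilinearly to $H^2(X,\CC)$. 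This is visibly a quadratic form; one checks it is real-valued on $H^2(X,\RR)$ and non-degenerate. A priori it depends on $\sigma$, but the Fujiki relation shows it is canonical up to a positive scalar.

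The heart of the argument is the Fujiki relation. Let $P$ be the homogeneous degree-$2m$ polynomial $\alpha\mapsto\int_X\alpha^{2m}$ on $H^2(X,\CC)$; it is purely topological, hence invariant under deformation of $X$. By Bogomolov--Tian--Todorov unobstructedness together with local Torelli, the period map identifies a neighbourhood of $[X]$ in the Kuranishi space with a Zariski-dense (in fact Euclidean-open) subset of the quadric $Q=\{q_X=0\}\subseteq\PP(H^2(X,\CC))$, the deformation $X_t$ going to the line spanned by its symplectic class $\sigma_t$. Since a holomorphic $(2m{+}2)$-form on a $2m$-fold vanishes, one has $\sigma_t^{m+1}=0$, so that for every $x\in H^2(X,\CC)$,
\[
P(\sigma_t+x)=\sum_{k=0}^{2m}\binom{2m}{k}\int_{X_t}\sigma_t^{2m-k}x^{k}=\sum_{k=m}^{2m}\binom{2m}{k}\int_{X_t}\sigma_t^{2m-k}x^{k},
\]
and hence $P$ vanishes to order $\geq m$ at $\sigma_t$. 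As $[X_t]$ varies, the $\sigma_t$ are Zariski-dense in the affine cone over $Q$, which is irreducible because $b_2(X)\geq 3$; so $P$ vanishes to order $\geq m$ along the hypersurface $V(q_X)$. A standard argument — the rational function $P/q_X^m$ is regular off $V(q_X)$ and, by the order-$m$ vanishing, also at the generic point of $V(q_X)$, hence everywhere regular by normality of $H^2(X,\CC)$ — gives $q_X^m\mid P$, and a degree count forces $P=c_X q_X^m$ for a constant $c_X$. Evaluating on a Kähler class, where $\int_X\omega^{2m}>0$ and $q_X(\omega)>0$ by the Hodge--Riemann relations, gives $c_X>0$.

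For the signature, on $H^2(X,\RR)$ the classes $\mathrm{Re}\,\sigma$ and $\mathrm{Im}\,\sigma$ span a $q_X$-positive $2$-plane which is the $q_X$-orthogonal complement of $H^{1,1}(X)_\RR$; within $H^{1,1}(X)_\RR$, the form $q_X$ is positive on Kähler classes and negative-definite on their $q_X$-orthogonal complement, so $q_X|_{H^{1,1}(X)_\RR}$ has signature $(1,h^{1,1}(X)-1)$. Since $N^1(X)_\RR\subseteq H^{1,1}(X)_\RR$ contains an ample class, on which $q_X>0$, and the $q_X$-orthogonal complement of a positive vector inside a Lorentzian space is negative-definite, the restriction of $q_X$ to $N^1(X)_\RR$ is non-degenerate of signature $(1,\rho(X)-1)$. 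Finally, if $\phi:X\to X$ is an automorphism then $\phi^\ast$ is a ring automorphism of $H^\ast(X,\ZZ)$ preserving $\int_X$, so $P\circ\phi^\ast=P$; hence $(q_X\circ\phi^\ast)^m=q_X^m$, and unique factorization in the polynomial ring gives $q_X\circ\phi^\ast=\zeta\,q_X$ with $\zeta^m=1$. Evaluating on an ample class — whose $\phi^\ast$-image is again ample, hence $q_X$-positive — forces $\zeta=1$, so $\phi^\ast$ preserves $q_X$ exactly.

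I expect the deformation step to be the main obstacle, and within it the passage from "$P$ vanishes on the period quadric'' to "$P$ vanishes to order $m$ along it'': this genuinely requires $\sigma_t^{m+1}=0$, not merely $\int_{X_t}\sigma_t^{2m}=0$, and one must verify that the period points sweep out a Zariski-dense subset of the irreducible quadric so that the higher-order vanishing propagates from the $\sigma_t$ to all of $V(q_X)$. The remaining steps are Hodge theory and formal algebra.
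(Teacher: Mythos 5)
The paper treats this proposition as a classical result, citing \cite[\S 23]{cybook} and \cite[Prop.\ 25.14]{cybook} without giving a proof, so there is no in-paper argument to compare against. Your sketch is a faithful and correct reconstruction of the standard proof from those references: the explicit formula for $q_X$ in terms of the normalized symplectic form, the Fujiki-relation argument via unobstructedness and local Torelli together with the vanishing $\sigma_t^{m+1}=0$ to get order-$m$ vanishing of the top-intersection polynomial along the (irreducible, by $b_2\geq 3$) period quadric, the Hodge-theoretic signature computation, and the deduction of $\phi^\ast$-invariance of $q_X$ from $\phi^\ast$-invariance of the degree-$2m$ Fujiki polynomial combined with positivity on an ample class to pin down the $m$-th root of unity.
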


\begin{example}[{\cite[\S 6]{beauville}, \cite{oguiso-hk}}]
A basic example of a hyper-K\"ahler manifold is the Hilbert scheme of configurations of \(n\) points on a K3 surface \(S\), which we denote by \(\Hilb^n(S)\).  If \(f : S \to S\) is an automorphism, then the induced automorphism \(f^{[n]} : \Hilb^n(S) \to \Hilb^n(S)\) of the Hilbert scheme satisfies \(\lambda_1(f^{[n]}) = \lambda_1(f)\).
\end{example}

To begin, we first note that any surjective endomorphism of a hyper-K\"ahler variety is actually an automorphism.

\begin{lemma}
\label{l:HK all endos are autos}
Every surjective endomorphism of a hyper-K\"ahler variety over \(\QQbar\) is an automorphism.
\end{lemma}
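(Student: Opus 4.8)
The plan is to show that a surjective endomorphism $f : X \to X$ of a hyper-K\"ahler variety must have topological degree $1$, which for a morphism of smooth projective varieties of the same dimension forces $f$ to be finite and birational, hence an isomorphism. The topological degree of $f$ equals the top self-intersection ratio: for any ample divisor $H$ on $X$, we have $f^*H \cdot f^*H \cdots f^*H = (f^*H)^{2m} = (\deg f)(H^{2m})$ when we interpret $\deg f$ as the degree of the generically finite map $f$. So it suffices to show $(f^*H)^{2m} = H^{2m}$ for some, equivalently any, ample $H$.

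First I would invoke the Beauville--Bogomolov--Fujiki form from Proposition~\ref{bbform}: writing $2m = \dim X$, there is a quadratic form $q_X$ on $H^2(X,\RR)$ and a constant $c_X$ with $D^{2m} = c_X\, q_X(D)^m$ for every divisor $D$, and crucially $f^*$ preserves $q_X$. Applying this with $D = H$ and with $D = f^*H$, I get
\[
(f^*H)^{2m} = c_X\, q_X(f^*H)^m = c_X\, q_X(H)^m = H^{2m},
\]
using $q_X(f^*H) = q_X(H)$. On the other hand, since $f$ is a surjective morphism between smooth projective varieties of the same dimension, the projection formula gives $(f^*H)^{2m} = \deg(f)\cdot H^{2m}$. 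Because $H$ is ample, $H^{2m} > 0$, so $\deg(f) = 1$.

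Finally I would conclude that a surjective morphism of degree $1$ between smooth projective varieties is an isomorphism: it is quasi-finite (a surjective morphism of integral varieties of the same dimension with no positive-dimensional fibers generically, and in fact one checks it is finite since $f^*H$ is ample so $f$ is finite), proper, and birational, and a finite birational morphism to a normal (here smooth) variety is an isomorphism by Zariski's main theorem. Hence $f$ is an automorphism.

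The only point requiring a little care is the justification that $f$ is \emph{finite} rather than merely generically finite, so that the projection formula $(f^*H)^{2m} = \deg(f)\, H^{2m}$ applies cleanly and Zariski's main theorem can be invoked; this follows because $f$ is a surjective morphism of projective varieties and $f^*H$ is ample (a divisor that is ample cannot restrict trivially to a positive-dimensional fiber), so $f$ has finite fibers and is therefore finite. Alternatively, one can avoid finiteness entirely: the equality $\deg(f) = 1$ together with surjectivity already forces $f$ to be birational, and a birational morphism from a smooth projective variety that is also \'etale in codimension one — which holds here since $K_X \sim 0$ and $f^*K_X \sim K_X$ — is an isomorphism. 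I expect the main (though still routine) obstacle to be choosing the cleanest such wrap-up; the heart of the argument is entirely the one-line computation with the Beauville--Bogomolov--Fujiki form.
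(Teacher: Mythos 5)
Your key step is unjustified and in fact circular. You write that ``crucially $f^*$ preserves $q_X$,'' citing Proposition~\ref{bbform}, but that proposition only asserts $q_X$-invariance for pullback by an \emph{automorphism}---which is precisely what you are trying to prove. Worse, the claim is false for a surjective endomorphism of degree $d>1$: applying the Fujiki relation $D^{2m}=c_X\,q_X(D)^m$ to $f^*H$ and combining with the projection formula $(f^*H)^{2m}=d\cdot H^{2m}$ gives $q_X(f^*H)^m = d\cdot q_X(H)^m$, so $f^*$ scales $q_X$ by $d^{1/m}$ rather than preserving it. Your BBF computation therefore reproduces the projection formula but gives no independent information, and the conclusion $\deg f=1$ does not follow.

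The paper's proof avoids the BBF form entirely. It first reduces to $\CC$ by fpqc descent, then notes that any surjective endomorphism $f$ is finite étale because $K_X$ is trivial (this is \cite[Lemma 2.3]{fujimoto}), so $\chi(\O_X)=\deg(f)\cdot\chi(\O_X)$. Since a hyper-K\"ahler variety of dimension $2m$ has $\chi(\O_X)=1+m\neq 0$, this forces $\deg(f)=1$. (One could equally well note that $X$ is simply connected by definition, so a finite étale self-cover must be trivial, but the Euler-characteristic argument is what the paper uses.) Your concluding paragraph---that a degree-one surjective morphism between smooth projective varieties is an isomorphism---is fine once degree one is established; the gap is entirely in how you try to establish it.
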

\begin{proof}
By fpqc descent, it is enough to check this after base change to $\CC$. 
Now if \(X\) is hyper-K\"ahler variety over $\CC$, we have \(\chi(\O_X) = 1+\frac{1}{2}\dim X\) \cite[Lemma 14.21]{Einstein-mfds}.
Next, let $f$ be a surjective endomorphism of $X$. By \cite[Lemma 2.3]{fujimoto}, $f$ is a finite \'etale cover and so $\chi(\O_X)=\deg(f)\chi(\O_X)$. Since $\chi(\O_X)\neq0$, we must have $\deg(f)=1$, i.e.~$f$ is an automorphism.
\end{proof}

Having now reduced to the case of automorphisms, we roughly follow the strategy of Kawaguchi's proof of the conjecture in the case of surfaces.   The following version of the Perron--Frobenius theorem plays an important role.

\begin{lemma}[\cite{birkhoff}]
\label{l:Birkoff}
Suppose that \(V\) is a finite-dimensional real vector space and that \(K \subset V\) is a closed, pointed, convex cone.  If \(T : V \to V\) is a linear map for which \(T(K) \subseteq K\), and the spectral radius of \(T\) is \(\lambda > 1\), then there exists a \(\lambda\)-eigenvector for \(T\) which is contained in \(K\).
\end{lemma}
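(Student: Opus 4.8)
The plan is to recognize this as the Perron--Frobenius (Krein--Rutman) theorem for cones, whose substance is that the spectral radius \emph{itself}---not just some eigenvalue---is realized by an eigenvector in $K$. I would work in the case that $K$ is \emph{solid}, i.e.\ that it spans $V$; this is the only case needed, since in all our applications $K$ is the nef cone of a projective variety and $T=f^\ast$. Fixing a norm on $V$ and a point $u$ in the interior of $K$, and writing $\lambda=\rho(T)$ for the spectral radius, the idea is to produce the desired eigenvector as a subsequential limit of suitably normalized resolvent vectors $x_t:=(tI-T)^{-1}u$ as $t$ decreases to $\lambda$ from above.

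In more detail: first, for real $t>\lambda$ the Neumann series $(tI-T)^{-1}=\sum_{n\ge 0}t^{-n-1}T^n$ converges, and since $T^n u\in K$ for every $n$ and $K$ is a closed convex cone, every partial sum---hence $x_t$ itself---lies in $K$ (indeed in $\operatorname{int}K$, as $t^{-1}u$ already is). Second, one shows $\dabs{x_t}\to\infty$ as $t\downarrow\lambda$. This combines two facts: $K$ is \emph{normal} (an automatic property of closed pointed cones in finite dimension: if $0\preceq_K y\preceq_K x$ then $\dabs{y}\le C\dabs{x}$ for a fixed $C$, by a compactness argument using pointedness), and $u$ is interior, so some ball $B(u,r)\subseteq K$; hence for any positive operator $S$ and any $v$ with $\dabs{v}\le r$ one has $Su\pm Sv=S(u\pm v)\in K$, so $0\preceq_K Su\pm Sv\preceq_K 2Su$, giving $\dabs{Sv}\le 2C\dabs{Su}$ and therefore $\dabs{S}_{\mathrm{op}}\le (2C/r)\dabs{Su}$. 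Applying this with $S=(tI-T)^{-1}$, and using that the resolvent norm blows up as $t$ decreases to the point $\lambda$ of the spectrum of $T$, yields $\dabs{x_t}\ge(r/2C)\dabs{(tI-T)^{-1}}_{\mathrm{op}}\to\infty$. Third, choosing $t_k\downarrow\lambda$, compactness of the unit sphere gives a subsequential limit $y$ of the unit vectors $y_k:=x_{t_k}/\dabs{x_{t_k}}$, with $\dabs{y}=1$ and $y\in K$ since $K$ is closed. Finally, from $(t_kI-T)x_{t_k}=u$ we get $(t_kI-T)y_k=u/\dabs{x_{t_k}}\to 0$, and passing to the limit (with $t_k\to\lambda$) gives $(\lambda I-T)y=0$; so $y\in K\setminus\set{0}$ is the required $\lambda$-eigenvector.

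The hard part is the second step, and specifically the claim that the real resolvent genuinely blows up as $t\downarrow\lambda$---equivalently, that $\lambda=\rho(T)$ is attained by a \emph{real positive} eigenvalue of $T$. This is the real content of the Perron--Frobenius theorem, and it is precisely where positivity, solidity, and pointedness of $K$ are used together; a cheaper application of the Brouwer fixed point theorem to a compact base $\set{x\in K:\ell(x)=1}$ of $K$ produces \emph{some} eigenvector in $K$ with positive eigenvalue, but gives no control over which eigenvalue. None of the hypotheses can be dropped: if $K=\RR_{\ge 0}\cdot(1,0)\subset\RR^2$ (a non-solid pointed cone) and $T=\operatorname{diag}(1,2)$, then $T(K)\subseteq K$ and $\rho(T)=2>1$, yet the only eigenvectors in $K$ lie in the eigenspace for $1$. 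In practice one simply invokes the classical result of \cite{birkhoff}.
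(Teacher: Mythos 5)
The paper does not prove this lemma---it is attributed directly to \cite{birkhoff}---so there is no in-text argument to compare against. That said, you flag something genuinely worth noting: as literally stated, the lemma omits a solidity hypothesis, and your example $K=\RR_{\geq 0}\cdot(1,0)\subset\RR^2$ with $T=\operatorname{diag}(1,2)$ is a valid counterexample to the unqualified statement. Birkhoff's theorem assumes the cone has nonempty interior. The paper only ever applies the lemma to $K=\Nef(X)\subset N^1(X)_\RR$, which is full-dimensional, so nothing downstream is affected, but the hypothesis ought to appear in the statement.

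On the proof sketch itself: as you yourself acknowledge, routing the blowup of $\dabs{x_t}$ through the operator-norm inequality $\dabs{x_t}\geq(r/2C)\dabs{(tI-T)^{-1}}_{\mathrm{op}}$ together with ``the resolvent norm blows up as $t\downarrow\lambda$'' is circular, since that blowup (along the real axis) is equivalent to $\rho(T)$ being a real eigenvalue---precisely what one wants to prove. But you are slightly underselling your own setup: the circularity disappears if you estimate the Neumann series termwise rather than through the resolvent's operator norm. Choose $\ell$ in the interior of the dual cone $K^\ast$ (nonempty since $K$ is pointed), so $\ell(y)\geq c\dabs{y}$ for all $y\in K$; since each $T^n u\in K$, this gives $\ell(T^n u)\geq c\dabs{T^n u}$. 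Your normality-plus-interiority estimate applied to $S=T^n$ gives $\dabs{T^n u}\geq(r/2C)\dabs{T^n}_{\mathrm{op}}$, and the elementary bound $\dabs{T^n}_{\mathrm{op}}\geq\rho(T)^n$ (true for any submultiplicative operator norm, by Gelfand's formula) then yields
\[
\ell(x_t)=\sum_{n\geq 0}t^{-n-1}\,\ell(T^n u)\geq\frac{cr}{2C}\sum_{n\geq 0}t^{-n-1}\rho(T)^n=\frac{cr}{2C}\cdot\frac{1}{t-\rho(T)}\longrightarrow\infty\qquad\text{as }t\downarrow\rho(T),
\]
with no assumption on where the peripheral spectrum sits. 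This gives $\dabs{x_t}\to\infty$ directly, and your first and third steps then complete the argument exactly as you wrote them.
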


\begin{definition}
We will say that a class \(\nu_+\) in \(N^1(X)_\RR\) is a \emph{leading eigenvector} for \(f\) if it is a nef class which is a \(\lambda_1(f)\)-eigenvector for \(f^\ast\).  We say that \((\nu_+,\nu_-)\) is an \emph{eigenvector pair} for \(f\) if \(\nu_+\) is a leading eigenvector for \(f\) and \(\nu_-\) is a leading eigenvector for \(f^{-1}\).

We say that \((D_+,D_-)\) is an \emph{eigendivisor pair} for \(f\) if \(D_+\) and \(D_-\) are \(\RR\)-divisors for which \(f^\ast(D_+) \sim_{\RR} \lambda_1(f) D_+\) and \((f^{-1})^\ast(D_-) \sim_{\RR} \lambda_1(f^{-1}) D_-\).  If \((D_+,D_-)\) is an eigendivisor pair, then the corresponding pair of numerical classes \((\nu_+,\nu_-)\) is an eigenvector pair.
\end{definition}

\begin{corollary}
\label{cor:eigenvector pairs exist}
If $f$ is an automorphism of a normal projective variety $X$ which satisfies \(\lambda_1(f) > 1\), then an eigenvector pair $(\nu_+,\nu_-)$ exists for $f$.  If moreover \(h^1(X,\O_X) = 0\), then an eigendivisor pair \((D_+,D_-)\) exists for \(f\).
\end{corollary}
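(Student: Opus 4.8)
The plan is to deduce both assertions from the Birkhoff--Perron--Frobenius lemma (Lemma~\ref{l:Birkoff}), applied to the nef cone. For the first statement I would take $V = N^1(X)_\RR$ and $K = \Nef(X)$. This cone is closed and convex by definition, and it is pointed because $X$ is projective: $\Nef(X)$ is dual to $\NEb(X)$, which spans $N_1(X)_\RR$, and the dual of a full-dimensional cone contains no line. Since $f$ is an automorphism, $f^\ast$ and $(f^{-1})^\ast = (f^\ast)^{-1}$ both carry $\Nef(X)$ into itself. As $f$ is a morphism of a normal variety, $\operatorname{SpecRad}(f^\ast) = \lambda_1(f) > 1$, and by Theorem~\ref{dyndegstuff}(\ref{dyndegstuff::1}) we likewise have $\operatorname{SpecRad}((f^{-1})^\ast) = \lambda_1(f^{-1}) > 1$. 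Lemma~\ref{l:Birkoff} then produces a nef $\lambda_1(f)$-eigenvector $\nu_+$ for $f^\ast$ and a nef $\lambda_1(f^{-1})$-eigenvector $\nu_-$ for $(f^{-1})^\ast$; by definition $(\nu_+, \nu_-)$ is an eigenvector pair.

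For the second statement, assume $h^1(X, \O_X) = 0$. Then the Picard variety $\Pic^0(X)$ is trivial, so the numerically trivial classes $\Pic^\tau(X)$—an extension of a finite group by $\Pic^0(X) = 0$—form a finite group, and the natural surjection $\Pic(X) \to N^1(X)$ induces an isomorphism $\Pic(X)_\RR \xrightarrow{\ \sim\ } N^1(X)_\RR$. I would choose an $\RR$-divisor $D_+$ whose class in $\Pic(X)_\RR$ maps to $\nu_+$ under this isomorphism, and likewise $D_-$ for $\nu_-$. Since the class of $f^\ast D_+ - \lambda_1(f) D_+$ in $N^1(X)_\RR$ vanishes, the isomorphism forces $f^\ast D_+ - \lambda_1(f) D_+ = 0$ in $\Pic(X)_\RR$, that is, $f^\ast D_+ \sim_\RR \lambda_1(f) D_+$; the same argument gives $(f^{-1})^\ast D_- \sim_\RR \lambda_1(f^{-1}) D_-$. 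Hence $(D_+, D_-)$ is an eigendivisor pair, and it visibly induces the eigenvector pair $(\nu_+, \nu_-)$.

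Most of this is bookkeeping rather than genuine difficulty. The one point that deserves care is the passage from a numerical eigenvector to an honest divisor when $X$ is only normal: I need that $h^1(X, \O_X) = 0$ forces $\Pic^0(X) = 0$, so that $\Pic(X)_\RR \cong N^1(X)_\RR$. In characteristic zero the Picard scheme is smooth with $\dim \Pic^0(X) = h^1(X, \O_X)$, so this holds, but it is the step I would spell out explicitly. (For the hyper-K\"ahler applications $X$ is smooth with $h^1(X, \O_X) = 0$ automatically, so no subtlety arises there.)
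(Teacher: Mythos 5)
Your proposal is correct and follows essentially the same route as the paper: apply Lemma~\ref{l:Birkoff} to $K = \Nef(X)$ together with Theorem~\ref{dyndegstuff}(\ref{dyndegstuff::1}) to produce the eigenvector pair, then use $h^1(X,\O_X)=0$ to identify $\Pic(X)_\RR$ with $N^1(X)_\RR$ and lift $\nu_\pm$ to $D_\pm$. The extra details you spell out (pointedness of the nef cone, finiteness of $\Pic^\tau$, smoothness of the Picard scheme in characteristic zero) are sound elaborations of steps the paper takes for granted.
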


\begin{proof}
Recall that \(\lambda_1(f^{-1}) = \lambda_{\dim X-1}(f) > 1\) by Theorem~\ref{dyndegstuff} (\ref{dyndegstuff::1}).  We obtain the result by applying Lemma \ref{l:Birkoff} to the case where \(V = N^1(X)_{\RR}\), \(K = \Nef(X)\), and \(T\) is the pullback \(f^\ast : N^1(X)_\RR \to N^1(X)_\RR\) or the pullback $(f^{-1})^\ast$.  Note in particular that \(\nu_+\) and \(\nu_-\) belong to the cone \(\Nef(X)\).

If \(h^1(X,\O_X) = 0\), then the map \(\Pic(X)_{\RR}\to N^1(X)_\RR\) is an isomorphism, and we may take \(D_\pm\) to be the unique lift of \(\nu_\pm\) to a linear equivalence class.
\end{proof}

We next single out three special properties of hyper-K\"ahler manifolds and their automorphisms. We will prove Conjecture \ref{conj:main} for any automorphisms satisfying these properties, which includes some non-hyper-K\"ahler examples as well.

\begin{definition}
\label{def:condsAB}
Suppose that \(f : X \to X\) is an automorphism of a normal projective variety \(X\).  We say that \(f\) has property
\begin{enumerate}
\item[(A)] if \(\lambda_1(f) > 1\) and \(\lambda_1(f) = \lambda_1(f^{-1})\);
\item[(B)] if \(\nu = \nu_+ + \nu_-\) is big for some eigenvector pair \((\nu_+,\nu_-)\);
\item[(C)] if \(h^1(X,\O_X) = 0\).
\end{enumerate}
\end{definition}

Recall that by Remark \ref{rmk:lambda1 is 1}, the conjecture is known whenever $\lambda_1(f)=1$. So there is never any harm in assuming $\lambda_1(f)>1$.

\begin{lemma}
\label{aandbhold}
Let \(f : X \to X\) be an automorphism of a normal projective variety $X$ and assume that $\lambda_1(f)>1$.  Then Conditions (A) and (B) hold in all of the following cases:
\begin{enumerate}
\item the dimension of \(X\) is equal to \(2\);
\item the Picard rank of \(X\) is equal to \(2\);
\item \(X\) is a hyper-K\"ahler variety.
\end{enumerate}
\end{lemma}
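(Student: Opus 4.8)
The plan is to handle cases (1) and (3) together, using an $f^\ast$-invariant quadratic form of signature $(1,\rho(X)-1)$ on $N^1(X)_\RR$ — the intersection form (via the Hodge index theorem) in case (1), and the Beauville--Bogomolov--Fujiki form $q_X$ of Proposition \ref{bbform} in case (3) — while case (2) will be a direct computation in rank $2$. In every case $\lambda_1(f)>1$ is assumed, and $\lambda_1(f^{-1})=\lambda_{\dim X-1}(f)>1$ by Theorem \ref{dyndegstuff}(\ref{dyndegstuff::1}), so Corollary \ref{cor:eigenvector pairs exist} supplies an eigenvector pair $(\nu_+,\nu_-)$ with $\nu_\pm\in\Nef(X)\setminus\{0\}$, $f^\ast\nu_+=\lambda_1(f)\nu_+$, and $(f^{-1})^\ast\nu_-=\lambda_1(f^{-1})\nu_-$; Condition (B) will be verified for this pair.

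For cases (1) and (3), let $q$ denote the invariant form. I would first observe that, since $f^\ast$ is a $q$-isometry and $\nu_+$ is a $\lambda_1(f)$-eigenvector with eigenvalue $>1$, the identity $q(\nu_+)=q(f^\ast\nu_+)=\lambda_1(f)^2\,q(\nu_+)$ forces $q(\nu_+)=0$, and symmetrically $q(\nu_-)=0$. Next, $\nu_+$ and $\nu_-$ cannot be proportional: if $\nu_-=t\nu_+$, then $\nu_+$ is simultaneously a $\lambda_1(f)$-eigenvector of $f^\ast$ and a $\lambda_1(f^{-1})$-eigenvector of $(f^\ast)^{-1}$, whence $\lambda_1(f)\lambda_1(f^{-1})=1$, contradicting $\lambda_1(f),\lambda_1(f^{-1})>1$. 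Since $q$ has signature $(1,\rho(X)-1)$ it admits no totally isotropic plane, so two non-proportional null vectors must satisfy $q(\nu_+,\nu_-)\neq0$; on the other hand nef classes pair non-negatively under $q$ (the nef cone lies in the closure of the positive cone), so $0\le q(\nu_++\nu_-)=2q(\nu_+,\nu_-)$, giving $q(\nu_+,\nu_-)>0$. Condition (A) then drops out: using $f^\ast\nu_-=\lambda_1(f^{-1})^{-1}\nu_-$ and the isometry property, $q(\nu_+,\nu_-)=q(f^\ast\nu_+,f^\ast\nu_-)=\tfrac{\lambda_1(f)}{\lambda_1(f^{-1})}q(\nu_+,\nu_-)$, so $\lambda_1(f)=\lambda_1(f^{-1})$. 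For Condition (B), the class $\nu=\nu_++\nu_-$ is nef with $q(\nu)=2q(\nu_+,\nu_-)>0$, hence $\nu^{\dim X}>0$ (directly when $\dim X=2$, and via $\nu^{2m}=c_X\,q_X(\nu)^m$ with $c_X>0$ in the hyper-K\"ahler case), and a nef class of positive top self-intersection is big. (In case (1), Condition (A) is in any case immediate from Theorem \ref{dyndegstuff}(\ref{dyndegstuff::1}), since $\dim X-1=1$.)

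For case (2), I would argue as follows. Since $f$ is an automorphism, $f^\ast$ preserves the lattice $N^1(X)/(\text{torsion})\cong\ZZ^2$, so $\det f^\ast=\pm1$ and $\lambda_1(f)$ equals the spectral radius of $f^\ast$. The nef cone of $X$ is a full-dimensional pointed closed convex cone in $\RR^2$, hence spanned by two distinct extremal rays $R_1,R_2$ which $f^\ast$ permutes. It cannot interchange them: if $f^\ast$ sends generators $v_1\mapsto av_2$ and $v_2\mapsto bv_1$ with $a,b>0$, then $(f^\ast)^2=ab\cdot\mathrm{id}$, so $f^\ast$ has eigenvalues $\pm\sqrt{ab}$, forcing $ab=\lvert\det f^\ast\rvert=1$ and $\lambda_1(f)=1$, a contradiction. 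Hence $f^\ast v_i=c_iv_i$ with $c_i>0$ and $c_1c_2=\lvert\det f^\ast\rvert=1$, so $\{c_1,c_2\}=\{\lambda_1(f),\lambda_1(f)^{-1}\}$; say $f^\ast v_1=\lambda_1(f)v_1$. Then $(f^{-1})^\ast v_2=\lambda_1(f)v_2$, so $\lambda_1(f^{-1})=\lambda_1(f)$ (Condition (A)), and taking $\nu_+=v_1$, $\nu_-=v_2$, the class $\nu=v_1+v_2$ lies in the interior of the nef cone, hence is ample and in particular big (Condition (B)).

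The steps are all elementary; the two points that require care are ruling out the ray-swapping possibility in case (2), and extracting the \emph{strict} inequality $q(\nu_+,\nu_-)>0$ in cases (1) and (3), which hinges on knowing that $\nu_+$ and $\nu_-$ are genuinely non-proportional and that nef classes have non-negative $q$-self-intersection. The only external input is the standard fact that a nef $\RR$-divisor with positive top self-intersection is big, which I would simply quote.
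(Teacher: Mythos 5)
Your argument is correct, and while the conclusion and overall strategy match the paper, you take a somewhat cleaner and more self-contained route in a couple of places. The paper disposes of case~(1) by citing Kawaguchi's \cite[Proposition~2.5]{surface-aut}, and in case~(3) it obtains Condition~(A) by invoking Oguiso's theorem \(\lambda_1(f)=\lambda_{\dim X-1}(f)\) \cite[Theorem~1.1]{oguiso-hk}; you instead treat cases~(1) and~(3) uniformly using the invariant form \(q\) and \emph{derive} Condition~(A) directly from the isometry relation \(q(\nu_+,\nu_-)=\lambda_1(f)\lambda_1(f^{-1})^{-1}\,q(\nu_+,\nu_-)\) once \(q(\nu_+,\nu_-)\neq 0\) is established. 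You are also slightly more careful than the paper on the point that the signature argument for \(q(\nu)\neq 0\) tacitly requires \(\nu_+\) and \(\nu_-\) to be non-proportional — you make this explicit via the observation that proportionality would force \(\lambda_1(f)\lambda_1(f^{-1})=1\) — and you explain why the resulting nonzero pairing is in fact positive (nef classes have \(q\geq 0\)), whereas the paper's line \(\Vol(\nu)=c_X q_X(\nu)^m>0\) really rests on \(\nu^{2m}\geq 0\) (from nefness) combined with \(q_X(\nu)\neq 0\), a step it leaves implicit. Your case~(2) is essentially the paper's computation, with the added explicit check that \(f^\ast\) cannot swap the two extremal rays of \(\Nef(X)\); the paper sidesteps this by working directly with eigenvalues, but your version is also fine. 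Everything you quote — the Hodge index theorem, Proposition~\ref{bbform}, the fact that nef plus positive top self-intersection implies big, and Corollary~\ref{cor:eigenvector pairs exist} — is used correctly.
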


\begin{proof}

In dimension \(2\), both conditions (A) and (B) are well-known consequences of the Hodge index theorem (see e.g.~\cite[Proposition 2.5]{surface-aut}).

Suppose instead that \(\rho(X) = 2\).  The pullback \(f^\ast : N^1(X) \to N^1(X)\) is invertible and so has determinant \(\pm 1\); letting the eigenvalues be $a$ and $b$, we then have $ab=\pm1$ where say $\abs{a}\geq \abs{b}$. Then $1<\lambda_1(f)=\abs{a}$. So the eigenvalues of $(f^{-1})^\ast$ are $a^{-1}=\pm b$ and $b^{-1}=\pm a$, and we see $\lambda_1(f^{-1})=\abs{\pm a}=\lambda_1(f)$. This verifies Condition (A). Since the Picard rank is \(2\), the sum of the two classes on the boundary of the pseudoeffective cone is big; in fact, if \(\nu_+\) and \(\nu_-\) are normalized appropriately, the sum may be assumed ample, which yields Condition (B).

We come at last to the hyper-K\"ahler case; the argument is the same as that in the two-dimensional setting, but with the Beauville--Bogomolov form standing in for the usual intersection product.  To verify Condition (A), we use a result of Oguiso~\cite[Theorem 1.1]{oguiso-hk} which tells us $\lambda_1(f)=\lambda_{\dim X-1}(f)$. Then by Theorem~\ref{dyndegstuff} (\ref{dyndegstuff::1}), we have \(\lambda_1(f^{-1}) = \lambda_{\dim X-1}(f) > 1\). We next check Condition (B).  Let \(\dim X = 2m\) and $(\nu_+,\nu_-)$ be an eigenvector pair for $f$, whose existence is guaranteed by Corollary \ref{cor:eigenvector pairs exist}; define $\nu =\nu_+ + \nu_-$. Since \(\nu\) is nef, its volume can be computed as the top self-intersection \(\nu^{2k}\), and \(\nu\) is big if and only if this number is positive.
We have
\[
q(\nu_+) = q(f^\ast \nu_+) = \lambda_1(f)^2 q(\nu_+),
\]
and so \(q(\nu_+) = 0\) since \(\lambda_1(f) > 1\).  The same argument shows that \(q(\nu_-) = 0\).
Since the form \(q_X(-)\) has signature \((1,\rho(X)-1)\) on \(\Pic(X)\), the maximal dimension of an isotropic subspace is \(1\), and so \(q_X(\nu) \neq 0\).  Since \(\nu\) is nef, \(\Vol(\nu) = \nu^{2m} = c_X q_X(\nu)^m > 0\), and we conclude that \(\nu\) is big.
\end{proof}

\begin{remark}
Notice that if a variety $X$ has Picard rank \(2\) and an automorphism $f$ with \(\lambda_1(f) > 1\), then necessarily \(K_X \equiv 0\), since otherwise \(K_X\) gives a \(1\)-eigenvector.  However, there are many interesting examples in this case~\cite{oguisorhois2,dqzhangrhois2}.
\end{remark}

\begin{lemma}
\label{l:CondB->middle-coeff}
Suppose that \(f : X \to X\) is an automorphism of a normal projective variety satisfying Condition (B) for some eigenvector pair $(\nu_+,\nu_-)$.  There is a unique value \(0 < \ell < \dim X\) for which \(\nu_+^\ell \cdot \nu_-^{\dim X-\ell}\) is nonzero, and
\[
\lambda_1(f)^\ell = \lambda_1(f^{-1})^{\dim X-\ell}.
\]
\end{lemma}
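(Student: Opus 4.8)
The plan is to extract the identity directly from the invariance of intersection numbers under an automorphism, using Condition (B) only to ensure that some mixed power of $\nu_+$ and $\nu_-$ is nonzero. Set $n = \dim X$, $\lambda = \lambda_1(f)$, and $\mu = \lambda_1(f^{-1})$; we may assume $\lambda > 1$ (Remark~\ref{rmk:lambda1 is 1}), and then $\mu > 1$ by Theorem~\ref{dyndegstuff} (\ref{dyndegstuff::1}). By definition $f^\ast\nu_+ = \lambda\nu_+$ in $N^1(X)_\RR$; applying $f^\ast$ to $(f^{-1})^\ast\nu_- = \mu\nu_-$ and using $f^\ast\circ(f^{-1})^\ast = \id$ gives $f^\ast\nu_- = \mu^{-1}\nu_-$. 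Recall also that $\nu_+$ and $\nu_-$ are nef.

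The crucial observation is that since $f$ is an automorphism, $\deg f = 1$, so $f^\ast$ preserves $n$-fold intersection numbers of Cartier classes. Applied to $\nu_+^{\,k}\cdot\nu_-^{\,n-k}$ this gives, for each $0 \le k \le n$,
\[
\nu_+^{\,k}\cdot\nu_-^{\,n-k} \;=\; (f^\ast\nu_+)^{k}\cdot(f^\ast\nu_-)^{n-k} \;=\; \lambda^{k}\mu^{-(n-k)}\,\nu_+^{\,k}\cdot\nu_-^{\,n-k},
\]
so whenever $\nu_+^{\,k}\cdot\nu_-^{\,n-k}\neq 0$ we must have $\lambda^{k} = \mu^{\,n-k}$. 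Since $\lambda,\mu > 1$, this excludes $k = 0$ (which would force $\mu^{n} = 1$) and $k = n$ (which would force $\lambda^{n} = 1$), so every such $k$ lies strictly between $0$ and $n$; and it is unique, because if $k_1 < k_2$ both satisfied the relation, dividing $\lambda^{k_1} = \mu^{n-k_1}$ by $\lambda^{k_2} = \mu^{n-k_2}$ would give $(\lambda\mu)^{k_2-k_1} = 1$, contradicting $\lambda\mu > 1$.

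It remains to produce at least one such $k$, and this is where Condition (B) enters. The class $\nu = \nu_+ + \nu_-$ is nef, being a sum of nef classes, and big by assumption, so $\Vol(\nu) = \nu^{n} > 0$, since the volume of a nef class is its top self-intersection (and everything extends to $\RR$-classes by continuity). Expanding $\nu^{n} = \sum_{k=0}^{n}\binom{n}{k}\,\nu_+^{\,k}\cdot\nu_-^{\,n-k}$ and noting that each summand is nonnegative---products of nef $\RR$-classes on a projective variety are $\ge 0$, as the nef cone is the closure of the ample cone---we conclude that some summand is strictly positive. Its index is the $\ell$ of the statement, and both assertions of the lemma then follow from the previous paragraph with $k = \ell$.

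I expect no real obstacle here: the only dynamical input is that an automorphism has degree $1$, and the positivity input is standard. The points I would cite rather than reprove are that the intersection pairing on $N^1(X)_\RR$ of a (possibly singular) normal projective variety is well-defined, multilinear, continuous, and nonnegative on nef classes, and that a nef $\RR$-class is big precisely when its top self-intersection is positive.
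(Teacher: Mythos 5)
Your proof is correct and rests on the same core fact as the paper's, namely that an automorphism preserves top intersection numbers, applied to the expansion of $(\nu_++\nu_-)^{\dim X}$. The two arguments are organized a bit differently: you apply $f^\ast$ once to each monomial $\nu_+^{k}\cdot\nu_-^{\dim X-k}$ separately, getting the scalar relation $\lambda^k=\mu^{\dim X-k}$ whenever that monomial is nonzero, and then must invoke the nonnegativity of intersection products of nef classes to rule out cancellation when establishing existence of a nonzero monomial. The paper instead applies $(f^\ast)^m$ to all of $\nu^{\dim X}$ at once and reads off from the $m$-dependence (in effect, linear independence of the exponential sequences $m\mapsto r^m$ for distinct $r>0$) that at most one monomial survives and that its coefficient $\lambda^\ell\mu^{\ell-\dim X}$ equals $1$; this sidesteps the nonnegativity input, though both facts are standard. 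Your term-by-term version is perhaps a touch more explicit about why $0<\ell<\dim X$ and why $\ell$ is unique; the paper's is slightly more compact. Either is fine.
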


\begin{proof}
Since \(\nu_+ + \nu_-\) is nef, its volume is equal to the top self-intersection \(( \nu_+ + \nu_- )^{\dim X}\). Since volume is preserved by pullback, the following holds for all positive integers $m$.
\begin{align*}
  ( \nu_+ + \nu_- )^{\dim X} &= (f^\ast)^m ( \nu_+ + \nu_- )^{\dim X} = (\lambda_1(f)^m \nu_+  + \lambda_1(f^{-1})^{-m} \nu_- )^{\dim X} \\
  &= \sum_{j=0}^{\dim X} \binom{\dim X}{j} (\lambda_1(f)^{j} \lambda_1(f^{-1})^{j-\dim X})^m \nu_+^j \cdot \nu_-^{\dim X-j}.
\end{align*}
In order for this quantity to be independent of \(m\), there must be at most one nonzero term \(j=\ell\), and the coefficient \(\lambda_1(f)^{\ell} \lambda_1(f^{-1})^{\ell-\dim X}\) must equal \(1\).
\end{proof}

\begin{remark}
If \(\dim X\) is odd, then an automorphism satisfying Condition (B) cannot satisfy Condition (A); indeed, Lemma \ref{l:CondB->middle-coeff} tells us we need $\lambda_1(f)^{\dim X - 2\ell}=1$ which is impossible. 
If \(\dim X\) is even and $f$ satisfies Condition (B), then $f$ satisfies Condition (A) if and only if \(\ell = \frac{1}{2}\dim X\).  
\end{remark}

We now turn to the proof of the Kawaguchi--Silverman conjecture in this setting.  The strategy is to construct a canonical height function for the automorphism $f$, following the work of Silverman~\cite{silvermank3} and Kawaguchi~\cite{surface-aut}, together with some inputs from birational geometry.

\begin{definition}
\label{def:augmented base locus}
Suppose that \(D\) is a \(\QQ\)-divisor on a normal projective variety \(X\).  The \emph{stable base locus} of \(D\) is the Zariski-closed subset of \(X\) defined by
\[
\BB(D) = \kern-1.2em \bigcap_{\substack{m \geq 1 \\ \text{$mD$ Cartier}}} \kern -1.1em \Bs(mD).
\]
\end{definition}

It is not hard to show that there exists an integer \(m_0\) such that \(\Bs(dm_0D) = \BB(D)\) for all sufficiently large integers \(d\)~\cite[Proposition 2.1.21]{lazarsfeld}. It follows that $\BB(D+D')\subset \BB(D)\cup\BB(D')$.

Suppose that \(D\) is an \(\RR\)-divisor on a normal projective variety \(X\).   The \emph{augmented base locus} \(\BB_+(D)\) is the Zariski-closed subset
\[
\BB_+(D) = \kern-1.8em\bigcap_{\substack{\text{$A$ ample} \\ \text{$D-A$ $\QQ$-divisor}}} \kern-1.6em \BB(D-A).
\]

We refer to \cite{elmnp1} for a detailed treatment of the properties of the invariant \(\BB_+(D)\), but single out the following:

\begin{lemma}[{\cite[Prop.\ 1.4, Examples 1.7--1.9, Prop.\ 1.5]{elmnp1}}] \label{bplusproperties}
\leavevmode
\begin{enumerate}
\item\label{bplusproperties::1} \(\BB_+(D)\) depends only on the numerical class of \(D\).
\item\label{bplusproperties::2} \(\BB_+(D)\) is a proper subset of \(X\) if and only if \(D\) is big.
\item\label{bplusproperties::3} For any \(\RR\)-divisor \(D\) and any real \(\lambda > 0\), we have \(\BB_+(D) = \BB_+(\lambda D)\).
\item\label{bplusproperties::4} For any \(\RR\)-divisors \(D_1\) and \(D_2\), we have \(\BB_+(D_1 + D_2) \subseteq \BB_+(D_1) \cup \BB_+(D_2)\).
\item\label{bplusproperties::5} Fix a norm \(\dabs{\cdot}\) on \(N^1(X)_\RR\).  For any \(\RR\)-divisor \(D\), there exists a constant \(\epsilon\) such that for any ample \(\RR\)-divisor \(A\) for which \(\dabs{A} < \epsilon\) and \(D-A\) is a \(\QQ\)-divisor, we have \(\BB_+(D) = \BB(D-A)\).
\end{enumerate}
\end{lemma}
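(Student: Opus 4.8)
These statements are exactly the contents of \cite[\S1]{elmnp1}, so the plan is to recall how those arguments run. The organizing principle is that part~(\ref{bplusproperties::5}) is the technical core: I would establish it first and then read off (\ref{bplusproperties::1}), (\ref{bplusproperties::3}), and (\ref{bplusproperties::4}) from it, while (\ref{bplusproperties::2}) follows separately from the description of big classes as the interior of the pseudoeffective cone. Throughout, ``small'' means small in the fixed norm on \(N^1(X)_\RR\).

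To prove (\ref{bplusproperties::5}), the two inputs are the subadditivity of stable base loci, \(\BB(E+E')\subseteq\BB(E)\cup\BB(E')\) (multiply sections), and the Noetherianity of \(X\). Subadditivity shows that if \(A,A'\) are ample with \(A-A'\) ample and both \(D-A\) and \(D-A'\) are \(\QQ\)-divisors, then \(\BB(D-A')\subseteq\BB(D-A)\cup\BB(A-A')=\BB(D-A)\): shrinking the ample perturbation can only shrink the base locus. By Noetherianity, \(\BB_+(D)=\BB(D-A_1)\cap\dots\cap\BB(D-A_k)\) for finitely many ample \(A_i\) with \(D-A_i\) \(\QQ\)-divisors. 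One can then find an ample \(A^\ast\) of arbitrarily small norm with \(D-A^\ast\) a \(\QQ\)-divisor and with \(A_i-A^\ast\) ample for every \(i\) (rational classes are dense, the ample cone is open with \(0\) in its closure, and taking the norm of \(A^\ast\) small enough keeps each \(A_i-A^\ast\) in the ample cone). For such \(A^\ast\), \(\BB(D-A^\ast)\subseteq\bigcap_i\BB(D-A_i)=\BB_+(D)\subseteq\BB(D-A^\ast)\), so \(\BB(D-A^\ast)=\BB_+(D)\); and since the ample cone is open there is \(\epsilon>0\) with \(A^\ast-A\) ample whenever \(\dabs{A}<\epsilon\), whence subadditivity gives \(\BB(D-A)\subseteq\BB(D-A^\ast)\cup\BB(A^\ast-A)=\BB_+(D)\) for every ample \(A\) of norm \(<\epsilon\) with \(D-A\) a \(\QQ\)-divisor, forcing equality. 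Applying this with \(D-A\) in place of \(D\) also records that \(\BB_+(E)=\BB(E)\) whenever \(E\) is obtained from an \(\RR\)-divisor by subtracting an ample class of small enough norm.

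From (\ref{bplusproperties::5}): for (\ref{bplusproperties::4}) choose small ample \(A_1,A_2\) with \(\BB_+(D_i)=\BB(D_i-A_i)\) and \(D_i-A_i\) \(\QQ\)-divisors; then \(A_1+A_2\) is small and ample, \((D_1+D_2)-(A_1+A_2)\) is a \(\QQ\)-divisor, and subadditivity of \(\BB\) gives \(\BB_+(D_1+D_2)=\BB((D_1-A_1)+(D_2-A_2))\subseteq\BB_+(D_1)\cup\BB_+(D_2)\). Part~(\ref{bplusproperties::3}) is immediate from the definition, using that \(\BB\) of a \(\QQ\)-divisor is unchanged under positive rescaling. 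For (\ref{bplusproperties::1}) I would treat \(\QQ\)-divisors first: if \(E\equiv E'\) are \(\QQ\)-divisors, then \(T=E'-E\) is numerically trivial, and comparing \(\BB_+(E)=\BB(E-C)\) for a small ample \(\QQ\)-divisor \(C\) with the perturbation \(C'=C-Q\) for a tiny ample \(\QQ\)-divisor \(Q\) gives \(E'-C'=(E-C)+(T+Q)\) with \(T+Q\) an ample \(\QQ\)-divisor, so \(\BB(E'-C')\subseteq\BB(E-C)\) and hence \(\BB_+(E')\subseteq\BB_+(E)\); symmetry gives equality. For general \(D\equiv D'\), pick an ample class \(c\) of sufficiently small norm lying in \([D]+N^1(X)_\QQ=[D']+N^1(X)_\QQ\), together with \(\RR\)-divisors \(A,A'\) with \([A]=[A']=c\) and \(D-A\), \(D'-A'\) \(\QQ\)-divisors (these exist since \(\QQ\)-divisors surject onto \(N^1(X)_\QQ\)); then \(D-A\equiv D'-A'\) are \(\QQ\)-divisors, and by (\ref{bplusproperties::5}), the remark above, and the \(\QQ\)-divisor case, \(\BB_+(D)=\BB(D-A)=\BB_+(D-A)=\BB_+(D'-A')=\BB(D'-A')=\BB_+(D')\). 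Finally, for (\ref{bplusproperties::2}): if \(D\) is big then \(D-A\) is a big \(\QQ\)-divisor for a small ample \(A\), hence admits a section and \(\BB(D-A)\subsetneq X\), so \(\BB_+(D)\subseteq\BB(D-A)\subsetneq X\); conversely, if \(\BB_+(D)\subsetneq X\) then by (\ref{bplusproperties::5}) some \(\BB(D-A)\) with \(A\) ample is a proper subset, so \(D-A\) is \(\QQ\)-linearly equivalent to an effective divisor and \(D=(D-A)+A\) is big.

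The main obstacle is the clean proof of (\ref{bplusproperties::5}): everything else is bookkeeping, but (\ref{bplusproperties::5}) must be set up carefully because the definition of \(\BB_+\) restricts to \(\QQ\)-divisor perturbations, which one has to balance against the openness of the ample cone and the density of rational classes. The second delicate point is the numerical invariance in (\ref{bplusproperties::1}): unlike \(\BB_+\), the ordinary stable base locus \(\BB\) is \emph{not} a numerical invariant --- a non-torsion degree-zero divisor on an elliptic curve has \(\BB=X\) while the trivial divisor has \(\BB=\emptyset\) --- so the argument has to genuinely use that subtracting a small ample class forces \(\BB\) and \(\BB_+\) to coincide.
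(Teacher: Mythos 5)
This lemma is quoted in the paper directly from \cite{elmnp1} without proof, and your reconstruction follows the arguments of that reference faithfully: deriving the stabilization statement (\ref{bplusproperties::5}) from Noetherianity plus subadditivity of \(\BB\), then reading off (\ref{bplusproperties::1}), (\ref{bplusproperties::2}), and (\ref{bplusproperties::4}) via small-ample perturbations. Those parts are all correct, and the remarks about \(\BB\) failing to be a numerical invariant are well placed.

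There is, however, a genuine gap in your treatment of part (\ref{bplusproperties::3}). Your argument ---``immediate from the definition, using that \(\BB\) of a \(\QQ\)-divisor is unchanged under positive rescaling''--- is a complete proof only for \(\lambda\in\QQ_{>0}\). The statement in the lemma, and its use in the paper (e.g.\ in Lemma~\ref{bpluscombos} with \(\lambda=a_2\) an arbitrary positive real), allow irrational \(\lambda\). For irrational \(\lambda\), the change of variables \(A\mapsto\lambda A\) no longer matches up the two defining families: \(\lambda D-\lambda A=\lambda(D-A)\) is not a \(\QQ\)-divisor when \(D-A\) is a nonzero \(\QQ\)-divisor and \(\lambda\notin\QQ\), so \(\BB(\lambda(D-A))\) is not even defined and the dictionary breaks down. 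A clean repair, in the spirit of your own use of (\ref{bplusproperties::5}), is this: by (\ref{bplusproperties::2}) we may assume \(D\) is big, so by (\ref{bplusproperties::5}) one has \(\BB_+(D)=\BB(E)\) for \(E=D-A\) a \(\QQ\)-divisor with \(A\) a small ample \(\RR\)-divisor. For rational \(k\) close to \(\lambda\) set \(A':=\lambda D-kE=(\lambda-k)D+kA\); then \(\lambda D-A'=kE\) is a \(\QQ\)-divisor, and \([A']\) is arbitrarily close to \(k[A]\), hence ample and of small norm once \(|\lambda-k|\) and \(\dabs{A}\) are small. Applying (\ref{bplusproperties::5}) to \(\lambda D\) gives \(\BB_+(\lambda D)=\BB(kE)=\BB(E)=\BB_+(D)\). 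Alternatively one can first establish the local constancy of \(\BB_+\) on the big cone (\cite[Prop.\ 1.5]{elmnp1}, which is one of the cited results), making \(\{\lambda>0:\BB_+(\lambda D)=\BB_+(D)\}\) open and closed, hence all of \((0,\infty)\); but that result requires its own argument and is not supplied by your (\ref{bplusproperties::5}).
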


In view of (\ref{bplusproperties::1}), we sometimes write \(\BB_+(\nu)\) where \(\nu\) is any class in \(N^1(X)_\RR\); this denotes \(\BB_+(D)\) for any \(D\) with numerical class \(\nu\).

\begin{lemma}
\label{bplussum}
Suppose that \(D_1\) is a big \(\RR\)-divisor and \(D_2\) is a nef \(\RR\)-divisor. Then
\[
\BB_+(D_1+D_2) \subseteq \BB_+(D_1).
\]
\end{lemma}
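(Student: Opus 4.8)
The plan is to exploit property (\ref{bplusproperties::5}) of Lemma \ref{bplusproperties} to reduce the claim about augmented base loci to a claim about stable base loci, where additivity under nef perturbations is easier to control. First I would fix a norm $\dabs{\cdot}$ on $N^1(X)_\RR$ and apply Lemma \ref{bplusproperties} (\ref{bplusproperties::5}) to the big $\RR$-divisor $D_1$: there is a constant $\epsilon > 0$ such that for any ample $\RR$-divisor $A$ with $\dabs{A} < \epsilon$ and $D_1 - A$ a $\QQ$-divisor, one has $\BB_+(D_1) = \BB(D_1 - A)$. I would similarly want such an $A$ to work for $D_1 + D_2$; since $D_2$ is nef, $D_1 + D_2$ is big (it is a big plus a nef class), so the same lemma supplies a constant $\epsilon'$ for $D_1 + D_2$, and I would take $A$ ample with $\dabs{A} < \min(\epsilon,\epsilon')$ and $A$, $D_1 - A$, $D_2$ all $\QQ$-divisors (rational ample classes are dense, and we may clear denominators / rescale $D_2$ appropriately, or more carefully just choose $A$ small enough that both equalities hold).

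With such an $A$ fixed, the key computation is
\[
\BB_+(D_1 + D_2) = \BB\bigl((D_1 + D_2) - A\bigr) = \BB\bigl((D_1 - A) + D_2\bigr) \subseteq \BB(D_1 - A) \cup \BB(D_2),
\]
using the containment $\BB(E + E') \subseteq \BB(E) \cup \BB(E')$ noted just after Definition \ref{def:augmented base locus}. Now $D_2$ is nef, so for every sufficiently divisible $m$ the linear system $|mD_2|$ is nonempty and in fact base-point free away from... — wait, nef does not give base-point freeness. So the cleaner route is: $D_2$ nef means $D_2$ is a limit of ample classes, hence $D_2 + \delta A'$ is ample for any ample $A'$ and $\delta > 0$; but a nef $\QQ$-divisor need not be semiample, so $\BB(D_2)$ need not be empty. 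I would instead argue directly that $\BB((D_1 - A) + D_2) \subseteq \BB(D_1 - A)$: since $D_2$ is nef and $A$ is ample, $D_1 - A + D_2 = (D_1 - A) + D_2$ and we want to compare stable base loci of $D_1 - A$ and $D_1 - A + D_2$. The point is that $D_2 = (D_2 + tA) - tA$; for $0 < t$ small with $D_1 - A - tA$ still... this is getting circular.

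The right statement to use is monotonicity: \emph{adding a nef divisor cannot enlarge the stable base locus of a big divisor}, because if $D$ is big then $D - \eta A$ is still effective-after-scaling for small $\eta$, and $\BB(D + N) \subseteq \BB(D - \eta A + (N + \eta A))$ where $N + \eta A$ is ample hence its multiples are very ample and base-point free, so $\BB(D + N) \subseteq \BB(D - \eta A)$, and finally $\BB(D - \eta A) = \BB_+(D) \cdot$... Actually the cleanest: apply Lemma \ref{bplusproperties} (\ref{bplusproperties::4}), $\BB_+(D_1 + D_2) \subseteq \BB_+(D_1) \cup \BB_+(D_2)$, and observe that since $D_2$ is nef, every point of $X$ lies outside $\BB_+(D_1)$ only — no. Let me state the plan honestly: I would prove the intermediate lemma that for $D$ big and $N$ nef, $\BB_+(D + N) \subseteq \BB_+(D)$ by choosing an ample $A$ with $\dabs{A}$ small enough that both $\BB_+(D) = \BB(D - A)$ and $\BB_+(D + N) = \BB(D + N - A)$ hold (via (\ref{bplusproperties::5})), and then noting $D + N - A = (D - A) + N$ with $N$ nef and $D - A$ big (as $D$ is big and $A$ small), so it suffices to show $\BB((D-A) + N) \subseteq \BB(D - A)$. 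For this last containment, write $N$ as a limit of ample $\QQ$-classes $N_k \to N$; since $D - A$ is big, for $k \gg 0$ the class $(D - A) - (N - N_k) = (D - A) + N_k - N$... the main obstacle is precisely this last reduction, handling the non-semiampleness of the nef class $N$, and I expect it is resolved by the observation that $\BB((D - A) + N) \subseteq \BB((D - A) + N - \delta A') \cup \BB(\delta A')$ for ample $A'$, with the second term empty and the first term $\subseteq \BB_+(D - A + N)$-adjacent arguments, ultimately landing inside $\BB(D - A) = \BB_+(D_1)$ after using that $(D-A) + N - \delta A'$ differs from $D - A$ by a class $N - \delta A'$ that, for $\delta$ small, keeps everything big — so $\BB$ of it is contained in $\BB_+(D - A) = \BB_+(D_1)$ by definition of $\BB_+$ as the intersection over ample subtractions. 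That final bookkeeping with the ample perturbations is the heart of the matter; the rest is formal manipulation of the properties already recorded in Lemma \ref{bplusproperties}.
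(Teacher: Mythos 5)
You correctly diagnose the obstacle: after splitting $D_1 + D_2 - A = (D_1 - A) + D_2$ you are left with $\BB(D_2)$, which need not be empty when $D_2$ is merely nef. But your proposed fix does not close the gap. In the final paragraph you reduce to showing $\BB\bigl((D-A) + N - \delta A'\bigr) \subseteq \BB_+(D-A)$, and you assert this ``by definition of $\BB_+$ as the intersection over ample subtractions.'' That would require $(D-A) + N - \delta A'$ to be of the form $(D-A)$ minus an ample class, i.e.\ $\delta A' - N$ to be ample. For a nonzero nef $N$ and small $\delta$, the class $\delta A' - N$ is not ample (typically not even pseudoeffective), so the inclusion you want does not follow from the definition. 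The step you flag as ``the heart of the matter'' is indeed the heart, and your bookkeeping at that point is incorrect.

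The missing idea is to use two ample perturbations of different sizes rather than a single $A$. Choose an ample $A_1$ with $D_1 - A_1$ a $\QQ$-divisor and $\BB_+(D_1) = \BB(D_1 - A_1)$. Then, using Lemma~\ref{bplusproperties}~(\ref{bplusproperties::5}) again, choose an ample $A_2$ small enough that $A_1 - A_2$ is still ample, $D_1 + D_2 - A_2$ is a $\QQ$-divisor, and $\BB_+(D_1 + D_2) = \BB(D_1 + D_2 - A_2)$. Now decompose
\[
D_1 + D_2 - A_2 = (D_1 - A_1) + \bigl(D_2 + (A_1 - A_2)\bigr),
\]
and observe that the second summand is nef plus ample, hence ample, so its stable base locus is empty. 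The subadditivity of $\BB$ that you already invoked then gives $\BB_+(D_1 + D_2) \subseteq \BB(D_1 - A_1) = \BB_+(D_1)$. In short: your decomposition strategy was right, but the ample class subtracted from $D_1 + D_2$ must be chosen strictly smaller than the one subtracted from $D_1$, so that the ``leftover'' piece $D_2 + (A_1 - A_2)$ absorbs an ample class and its base locus vanishes.
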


\begin{proof}
First, choose an ample \(\RR\)-divisor \(A_1\) so that \(D_1 - A_1\) is a \(\QQ\)-divisor and
\[
\BB_+(D_1) = \BB(D_1 - A_1).
\]
Now, choose another ample \(\RR\)-divisor \(A_2\) for which
\(D_1 + D_2 - A_2\) is a \(\QQ\)-divisor, \(A_1 - A_2\) is ample, and
\[
\BB_+(D_1 + D_2) = \BB(D_1 + D_2 - A_2).
\]

It again follows from Lemma~\ref{bplusproperties} (\ref{bplusproperties::5}) that \(A_2\) may be taken to be any sufficiently small ample divisor for which \(D_1 + D_2 - A_2\) is a \(\QQ\)-divisor.  Note that the divisor \(D_2 + (A_1 - A_2) = (D_1 + D_2 - A_2) - (D_1 - A_1)\) is again a \(\QQ\)-divisor, and we may then compute:
\begin{align*}
  \BB_+(D_1 + D_2) &= \BB(D_1 + D_2 - A_2)
  = \BB \left( (D_1 - A_1) + (D_2 + (A_1 - A_2)) \right) \\
  &\subseteq \BB(D_1 - A_1) \cup \BB( D_2 + (A_1 - A_2) ) \\
  &= \BB_+(D_1) \cup \BB(D_2 + (A_1 - A_2)) = \BB_+(D_1),
\end{align*}
where \(\BB(D_2 + (A_1 - A_2))\) is empty since \(D_2\) is nef and \(A_1 - A_2\) is ample.
\end{proof}

\begin{lemma}
\label{bpluscombos}
Suppose that \(D_1\) and \(D_2\) are nef \(\RR\)-divisors.  Then for any \(a_1,a_2 > 0\), the locus \(\BB_+(a_1 D_1 + a_2 D_2)\) is independent of \(a_1\) and \(a_2\).
\end{lemma}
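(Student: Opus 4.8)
The plan is to show that for any two pairs of positive reals $(a_1, a_2)$ and $(b_1, b_2)$ one has $\BB_+(a_1 D_1 + a_2 D_2) = \BB_+(b_1 D_1 + b_2 D_2)$, and by symmetry and Lemma \ref{bplusproperties} (\ref{bplusproperties::3}) it suffices to treat one representative inclusion and then scale. The key structural fact to exploit is that the divisors in question need not be big; if some combination $a_1 D_1 + a_2 D_2$ fails to be big, then by Lemma \ref{bplusproperties} (\ref{bplusproperties::2}) every positive combination fails to be big — indeed if $a_1 D_1 + a_2 D_2$ is not big, then for any $b_1, b_2 > 0$ one can write $b_1 D_1 + b_2 D_2 \le c(a_1 D_1 + a_2 D_2)$ for a suitable $c > 0$ modulo a nef divisor, and a big divisor dominated by a non-big nef-plus-combination would be a contradiction via volume monotonicity. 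So in the non-big case all the loci equal $X$ and there is nothing to prove. Hence we may assume every positive combination of $D_1$ and $D_2$ is big.

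First I would handle the case where we compare $\BB_+(a_1 D_1 + a_2 D_2)$ with $\BB_+(b_1 D_1 + b_2 D_2)$ under the assumption $b_1 \ge a_1$ and $b_2 \ge a_2$. Then $b_1 D_1 + b_2 D_2 = (a_1 D_1 + a_2 D_2) + ((b_1 - a_1) D_1 + (b_2 - a_2) D_2)$, where the first summand is big and the second is nef (a nonnegative combination of nef divisors). Lemma \ref{bplussum} immediately gives $\BB_+(b_1 D_1 + b_2 D_2) \subseteq \BB_+(a_1 D_1 + a_2 D_2)$. For the reverse inclusion, observe that we can also write $a_1 D_1 + a_2 D_2$ as a positive scalar multiple of $b_1 D_1 + b_2 D_2$ plus a nef divisor: choose $t > 0$ small enough that $a_i - t b_i \ge 0$ for $i = 1, 2$, so that $a_1 D_1 + a_2 D_2 = t(b_1 D_1 + b_2 D_2) + ((a_1 - t b_1) D_1 + (a_2 - t b_2) D_2)$ with the last term nef; since $t(b_1 D_1 + b_2 D_2)$ is big, Lemma \ref{bplussum} and Lemma \ref{bplusproperties} (\ref{bplusproperties::3}) give $\BB_+(a_1 D_1 + a_2 D_2) \subseteq \BB_+(t(b_1 D_1 + b_2 D_2)) = \BB_+(b_1 D_1 + b_2 D_2)$. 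Thus equality holds in the componentwise-comparable case.

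To remove the comparability hypothesis, given arbitrary positive pairs $(a_1, a_2)$ and $(b_1, b_2)$, I would pass through a common upper bound: set $c_i = \max(a_i, b_i)$ for $i = 1,2$, so that $(c_1, c_2)$ dominates both pairs componentwise. Applying the previous paragraph twice yields $\BB_+(a_1 D_1 + a_2 D_2) = \BB_+(c_1 D_1 + c_2 D_2) = \BB_+(b_1 D_1 + b_2 D_2)$, which is the claim. I do not expect a serious obstacle here; the only point requiring a little care is the reduction to the big case, since Lemma \ref{bplussum} is stated only for $D_1$ big — the argument above is arranged precisely so that the big summand is always available. A minor alternative for the non-big reduction, avoiding the volume-comparison sketch, is simply to note that bigness of a nef divisor depends only on its numerical class and that $\{(s,t) : s D_1 + t D_2 \text{ big}\}$ is either all of the open positive quadrant or none of it, since the big cone is open and convex and is a cone; this is cleaner and I would use it in the write-up.
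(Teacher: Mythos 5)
Your argument is correct and rests on the same key input as the paper's, namely Lemma~\ref{bplussum} applied to a decomposition of one combination as a big divisor plus a nef divisor, with the reverse inclusion obtained by a symmetric decomposition and Lemma~\ref{bplusproperties}~(\ref{bplusproperties::3}). The organization differs mildly: the paper compares each $\BB_+(a_1 D_1 + a_2 D_2)$ directly to $\BB_+(D_1+D_2)$ by splitting on whether $a_1 \geq a_2$, whereas you compare two arbitrary pairs and route through a common componentwise upper bound; these are logically interchangeable. One genuine improvement on your part is the explicit reduction to the case where all positive combinations are big: Lemma~\ref{bplussum} requires a big summand, and the paper applies it without comment, even though the lemma is invoked (in Corollary~\ref{cor:invariant-B+}) in situations where bigness is not assumed a priori. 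Your observation that $\{(s,t) : sD_1 + tD_2 \text{ big}\}$ is either the whole open quadrant or empty — because a nonnegative combination of nef divisors added to a big one stays big — is the clean way to dispose of the degenerate case, and is worth writing down.
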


\begin{proof}
We show that for any \(a_1,a_2 > 0\), we have \(\BB_+(a_1 D_1 + a_2 D_2) = \BB_+(D_1 + D_2)\). Suppose first that \(a_1 \geq a_2\).  Recalling that \(\BB_+(D) = \BB_+(\lambda D)\) according to Lemma~\ref{bplusproperties} (\ref{bplusproperties::3}), it follows from Lemma~\ref{bplussum} that
\begin{align*}
  \BB_+(a_1 D_1 + a_2 D_2) &= \BB_+(a_2(D_1 + D_2) + (a_1 - a_2) D_1) \\ &\subseteq \BB_+(a_2(D_1 + D_2)) = \BB_+(D_1 + D_2), \\
  \BB_+(D_1 + D_2) &= \BB(a_1(D_1+D_2)) = \BB( (a_1 D_1 + a_2 D_2) + (a_1 - a_2) D_2) \\ &\subseteq \BB_+(a_1 D_1 + a_2 D_2).
\end{align*}
The case when \(a_1 < a_2\) follows from the same argument, reversing the roles of \(D_1\) and \(D_2\).
\end{proof}

\begin{corollary}
\label{cor:invariant-B+}
Suppose that \(f : X \to X\) is an automorphism of a normal projective variety with \(\lambda_1(f) > 1\), and let \((\nu_+,\nu_-)\) be an eigenvector pair.  Then \(\BB_+(\nu_++\nu_-)\) is invariant under \(f\). Furthermore, if \(f\) satisfies Condition (B) and \(P\) is a $\QQbar$-point of \(X\) with Zariski-dense orbit under \(f\), then \(P\) is not contained in \(\BB_+(\nu_+ + \nu_-)\).
\end{corollary}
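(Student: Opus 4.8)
The plan is to deduce the $f$-invariance of $\BB_+(\nu_+ + \nu_-)$ from the behavior of $\BB_+$ under pullback together with Lemma~\ref{bpluscombos}. First I would observe that since $f$ is an automorphism, pullback commutes with augmented base loci in the precise sense that $f^{-1}(\BB_+(D)) = \BB_+(f^\ast D)$ for any $\RR$-divisor $D$; this is immediate because $f^\ast$ induces an isomorphism on linear systems and carries ample divisors to ample divisors, so it respects every operation in the definition of $\BB(-)$ and $\BB_+(-)$, and by Lemma~\ref{bplusproperties}~(\ref{bplusproperties::1}) it only depends on numerical classes. Applying this to $D = \nu_+ + \nu_-$ and using $f^\ast \nu_+ = \lambda_1(f)\nu_+$, $f^\ast\nu_- = \lambda_1(f^{-1})^{-1}\nu_-$, we get
\[
f^{-1}\bigl(\BB_+(\nu_+ + \nu_-)\bigr) = \BB_+\bigl(\lambda_1(f)\nu_+ + \lambda_1(f^{-1})^{-1}\nu_-\bigr).
\]
Now $\nu_+$ and $\nu_-$ are both nef, and $\lambda_1(f), \lambda_1(f^{-1})^{-1} > 0$, so Lemma~\ref{bpluscombos} tells us the right-hand side equals $\BB_+(\nu_+ + \nu_-)$. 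Hence $f^{-1}(\BB_+(\nu_+ + \nu_-)) = \BB_+(\nu_+ + \nu_-)$, and since $f$ is an automorphism this is the same as $f$-invariance.

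For the second statement, assume $f$ satisfies Condition (B), so that $\nu = \nu_+ + \nu_-$ is big; by Lemma~\ref{bplusproperties}~(\ref{bplusproperties::2}) the locus $Z := \BB_+(\nu)$ is then a proper Zariski-closed subset of $X$. By the first part, $Z$ is $f$-invariant: $f(Z) = Z$. Suppose for contradiction that $P \in Z$ for some point $P$ with Zariski-dense $f$-orbit. Then the entire orbit $\{f^n(P) : n \geq 0\}$ is contained in $Z$ (here using that $f(Z) = Z$, so in particular $f(Z) \subseteq Z$), hence its Zariski closure is contained in the proper closed subset $Z$, contradicting density of the orbit. Therefore $P \notin \BB_+(\nu_+ + \nu_-)$.

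The only genuinely substantive point is the compatibility $f^{-1}(\BB_+(D)) = \BB_+(f^\ast D)$, and even this is routine: for a morphism $f$ one has $\Bs(f^\ast L) = f^{-1}(\Bs(L))$ when $f$ is surjective and $f_\ast \O_X = \O_Y$ (true for an automorphism, trivially), so the stable base locus transforms correctly, and then taking the intersection over ample twists $D - A$ — noting $f^\ast$ maps ample $\QQ$-divisors bijectively to ample $\QQ$-divisors — gives the statement for $\BB_+$. I do not expect any real obstacle here; the content of the corollary is entirely in the preceding three lemmas (Lemmas~\ref{bplussum}, \ref{bpluscombos} and Lemma~\ref{bplusproperties}), which have already done the work of showing $\BB_+$ of a positive combination of two nef classes is insensitive to the coefficients.
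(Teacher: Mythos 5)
Your argument is correct and is essentially the same as the paper's: both reduce to the observation that an automorphism pulls back $\BB_+$ to $\BB_+$ of the pullback class, apply this to the eigenvector pair, and finish with Lemma~\ref{bpluscombos}; the second part is the identical density argument. The only cosmetic difference is that you compute $f^{-1}(\BB_+(\nu_++\nu_-))$ via $f^\ast$ while the paper computes $f(\BB_+(\nu_++\nu_-))$ via $(f^{-1})^\ast$, which is the same identity read in the opposite direction.
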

\begin{proof}
We have
\begin{align*}
f(\BB_+(\nu_++\nu_-)) &= \BB_+((f^{-1})^\ast(\nu_++\nu_-)) \\ &=  \BB_+(\lambda_1(f)^{-1} \nu_+ + \lambda_1(f^{-1}) \nu_-) = \BB_+(\nu_++\nu_-),
\end{align*}
where the final equality follows from Lemma~\ref{bpluscombos}.

If \(f\) satisfies Condition (B), then \(\nu = \nu_+ + \nu_-\) is big, and so \(\BB_+(\nu)\) is a proper Zariski-closed subset of \(X\), invariant under \(f\).  It follows that a point with dense orbit cannot lie in \(\BB_+(\nu)\).
\end{proof}

\begin{remark}
In the case \(\dim X = 2\), it follows from~\cite[Proposition 3.1(2)]{surface-aut} that the locus \(\BB_+(\nu)\) is precisely the union of the \(f\)-invariant curves.  If \(\rho(X) = 2\), then \(\nu\) may be assumed ample after a suitable choice of normalization for \(\nu_+\), and \(\BB_+(\nu)\) is empty.
\end{remark}

\begin{proposition}[{The Weil height machine, e.g.\ \cite[Theorem B.3.6]{hindrysilverman}}]
Let \(X\) be a projective variety defined over \(\QQbar\). There exists a unique map
\[
\Pic(X)_{\RR} \to \frac{ \set{ \text{functions $X(\QQbar) \to \RR$} }}{ \set{ \text{bounded functions $X(\QQbar) \to \RR$}}}
\]
with the following properties:
\begin{enumerate}
\item Normalization: if \(D\) is very ample, $\phi_D\colon X\to\PP^n$ is the associated embedding, and $h$ is the absolute logarithmic height \cite[\S B.2]{hindrysilverman}, then \(h_D(P) = h(\phi_D(P)) + O(1)\).
\item Functoriality: if \(\pi : X \to Y\) is a morphism, then \(h_{X,\pi^\ast D}(P) = h_{Y,D}(\pi(P)) + O(1)\).
\item Additivity: \(h_{X,D_1+D_2}(P) = h_{X,D_1}(P) + h_{X,D_2}(P) + O(1)\)
\item Positivity: If \(D\) is effective, then \(h_{X,D}(P) \geq O(1)\) for \(P\) outside the base locus of \(D\).
\end{enumerate}
\end{proposition}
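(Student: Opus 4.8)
The plan is to follow the classical construction of the height machine (as in \cite[\S B.3]{hindrysilverman}), bootstrapping everything from the elementary theory of heights of points on projective space. First I would define $h_D$ for $D$ very ample: a choice of basis of $H^0(X,\O_X(D))$ determines a closed immersion $\phi_D : X \hookrightarrow \PP^n$, and I set $h_D(P) = h(\phi_D(P))$, where $h$ is the absolute logarithmic height on $\PP^n(\QQbar)$. The key point is that the class of $h_D$ modulo bounded functions is independent of the chosen basis: two bases differ by an element of $\mathrm{GL}_{n+1}$, and a linear change of coordinates alters the height of a point by $O(1)$. This is an instance of the functoriality estimate for heights of points under a rational map $\PP^n \rat \PP^m$ given by forms of a fixed degree, which I would prove place-by-place from the triangle (resp.\ ultrametric) inequality, together with the product formula, so that only finitely many places contribute an unbounded term. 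Applying the same ideas to the Segre embedding yields $h_{D+E} = h_D + h_E + O(1)$ for very ample $D$ and $E$, which gives additivity on the semigroup of very ample classes.

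Next I would extend the definition. Every ample $D$ has a very ample positive multiple $mD$, so I set $h_D = \frac{1}{m} h_{mD}$, independent of $m$ by the previous step. On a projective variety every divisor class is a difference $D \equiv D_1 - D_2$ of very ample classes, so I set $h_D = h_{D_1} - h_{D_2}$, well defined modulo bounded functions by additivity on very ample classes. Extending $\RR$-linearly over $\Pic(X)_\RR$ then produces the desired map, and additivity is built in.

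It then remains to verify the other properties. Normalization holds by construction. For functoriality under $\pi : X \to Y$, I would reduce to $D$ very ample on $Y$: the composite $\phi_D \circ \pi$ is given by sections of $\pi^\ast\O_Y(D)$, hence agrees up to $O(1)$ with an embedding attached to a very ample multiple of $\pi^\ast D$ (after possibly twisting by an auxiliary very ample divisor on $X$), and the estimate from the first step finishes it; the general case follows by taking differences of very amples. For positivity, if $D$ is effective with defining section $s$, include the coordinate of $s$ in an embedding attached to a suitable $mD + A$ with $A$ very ample; for $P \notin \Bs(D)$ the height of the image point is bounded below by the contribution of that coordinate, which is $O(1)$ since $s(P) \neq 0$, and subtracting $h_A$ and dividing by $m$ yields $h_D(P) \geq O(1)$. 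Uniqueness follows because any map obeying Normalization and Additivity is forced on very ample classes, hence on their differences, hence on all of $\Pic(X)_\RR$.

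I expect the main obstacle to be the well-definedness in the first step — that the naive height attached to a very ample divisor does not depend on the chosen embedding beyond an $O(1)$ error. That is exactly where the elementary theory of heights of points does the real work, through the place-by-place triangle inequality and the product formula; once that estimate is in hand, everything else is bookkeeping with the semigroup of very ample divisor classes.
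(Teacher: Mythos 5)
The paper does not give a proof of this proposition; it is stated as a citation to the standard reference (Hindry--Silverman, Theorem~B.3.6) and used as a black box. Your sketch reproduces precisely the construction from that reference: define the height for very ample divisors via projective embeddings and show independence of the chosen basis up to $O(1)$ (the key estimate, proved place-by-place), get additivity on very ample classes via the Segre embedding, extend first to ample divisors by taking positive multiples and then to arbitrary divisors by writing every class as a difference of very amples, and finally check functoriality, positivity, and uniqueness. This matches both the cited source and the standard treatment, so there is nothing to compare against in the paper itself.

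One minor caveat worth recording: the reference proves the theorem for $\Pic(X)$ (integral divisors), whereas the paper states it for $\Pic(X)_\RR$. Your proposal handles this by extending $\RR$-linearly at the end, which is fine, but to make the uniqueness claim precise for $\Pic(X)_\RR$ one needs the $\RR$-homogeneity $h_{aD} = a\, h_D + O(1)$ as part of the package (otherwise normalization plus additivity only pin down the map on the integral lattice). That is implicit in the phrase ``extending $\RR$-linearly,'' but it is the sort of thing a referee would ask you to spell out.
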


By \emph{a height function} for an \(\RR\)-divisor class \(D\), we mean a function \(h_D : X(\QQbar) \to \RR\) belonging to the class of height functions for \(D\).

The augmented base locus is well-suited to working with height functions associated to big \(\RR\)-divisors.  The next two lemmas give extensions of the positivity property and Northcott's lemma to this setting.

\begin{lemma}
\label{positivity}
Let \(X\) be a normal, projective variety over \(\QQbar\).
\begin{enumerate}
\item\label{positivity:1} Suppose that \(D\) is a \(\QQ\)-divisor.  Then \(h_{X,D}(P) \geq O(1)\) for \(P\) outside \(\BB(D)\).   
\item\label{positivity:2} Suppose that \(D\) is a \(\RR\)-divisor.  Then \(h_{X,D}(P) \geq O(1)\) for \(P\) outside \(\BB_+(D)\).
\item\label{positivity:3} Suppose that \(D\) is a big \(\RR\)-divisor on \(X\).  Then for any \(M\) and \(N\), there are only finitely many points \(P\) of \(X(\QQbar) \setminus  \BB_+(D)\) with \([\QQ(P):\QQ] < M\) and \(h_{D}(P) < N\).
\end{enumerate}
\end{lemma}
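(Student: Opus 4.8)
The plan is to deduce all three parts from the formal properties of the Weil height machine together with the base-locus facts already established, the recurring device being to peel off a small ample $\RR$-divisor in order to reduce to the case of $\QQ$-divisors. For part~(\ref{positivity:1}), if $\BB(D) = X$ there is nothing to prove, so assume otherwise. Using the fact recalled after Definition~\ref{def:augmented base locus} that $\Bs(d m_0 D) = \BB(D)$ for all $d \gg 0$ (with $m_0 D$ Cartier), fix one positive integer $m$ with $mD$ Cartier and $\Bs(mD) = \BB(D)$; since $\BB(D) \neq X$, the linear system $|mD|$ is nonempty, so $mD$ is linearly equivalent to an effective divisor. The Positivity property of the Weil height machine then gives $h_{X,mD}(P) \ge O(1)$ for $P \notin \Bs(mD) = \BB(D)$, and dividing by $m$ via Additivity yields $h_{X,D}(P) = \tfrac1m h_{X,mD}(P) + O(1) \ge O(1)$ off $\BB(D)$.

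For part~(\ref{positivity:2}), I would apply Lemma~\ref{bplusproperties}~(\ref{bplusproperties::5}) to choose an ample $\RR$-divisor $A$ such that $D - A$ is a $\QQ$-divisor and $\BB(D - A) = \BB_+(D)$, and write $D = (D - A) + A$. By Additivity, $h_{X,D} = h_{X,D-A} + h_{X,A} + O(1)$; part~(\ref{positivity:1}) bounds $h_{X,D-A}$ below off $\BB(D-A) = \BB_+(D)$, so it only remains to note that an ample $\RR$-divisor has a height function bounded below on all of $X(\QQbar)$. Indeed $A$ is a positive $\RR$-linear combination of ample integral divisors, each of which has a very ample multiple, and the height function of a very ample divisor $H$ equals $h \circ \phi_H + O(1)$, which is bounded below since the absolute logarithmic height on projective space is nonnegative; additivity then gives $h_{X,A} \ge O(1)$. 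Combining, $h_{X,D}(P) \ge O(1)$ for $P \notin \BB_+(D)$.

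For part~(\ref{positivity:3}), since $D$ is big we again have $\BB_+(D) \neq X$ by Lemma~\ref{bplusproperties}~(\ref{bplusproperties::2}), and we use the same decomposition $D = (D - A) + A$ with $A$ an ample $\RR$-divisor and $\BB(D-A) = \BB_+(D)$. For a point $P \in X(\QQbar) \setminus \BB_+(D)$ with $h_D(P) < N$, part~(\ref{positivity:1}) applied to $D - A$ gives $h_{X,A}(P) = h_{X,D}(P) - h_{X,D-A}(P) + O(1) \le N + O(1)$, the implied constant depending only on $D$. Keeping a single very ample summand $H$ in a decomposition $A = \sum_i c_i H_i$ (with $c_i > 0$ and $H_i$ very ample integral), we get $h_{X,H}(P) \le C$ for a constant $C = C(D,N)$ independent of $P$; the classical Northcott property for the projective embedding $\phi_H$ --- finitely many points of $\PP^n$ of degree less than $M$ below a fixed height --- then bounds the number of such $P$.

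I do not anticipate a genuine obstacle here: all the geometric content is already packaged in Lemma~\ref{bplusproperties} and the elementary properties of the stable base locus. The only points needing a little care are that an ample $\RR$-divisor, while no longer literally effective, still has a height bounded below, and that Lemma~\ref{bplusproperties}~(\ref{bplusproperties::5}) lets one arrange $\BB(D - A) = \BB_+(D)$ so as to fall back on part~(\ref{positivity:1}); one must also check that every $O(1)$ in sight depends only on the divisor classes involved (and, in part~(\ref{positivity:3}), on $N$) and not on the point $P$.
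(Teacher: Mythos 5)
Your proof is correct and follows essentially the same route as the paper: for (\ref{positivity:1}), scale to a Cartier multiple with \(\Bs(mD)=\BB(D)\) and invoke the positivity axiom of the height machine; for (\ref{positivity:2}) and (\ref{positivity:3}), use Lemma~\ref{bplusproperties}~(\ref{bplusproperties::5}) to write \(D=(D-A)+A\) with \(A\) ample and \(\BB(D-A)=\BB_+(D)\), bound \(h_{D-A}\) below via (\ref{positivity:1}), and in (\ref{positivity:3}) transfer the bound to \(h_A\) and invoke Northcott. The only cosmetic difference is that the paper applies Northcott directly to the ample \(\RR\)-divisor \(A\) (citing \cite[Theorem B.3.2(g), Remark B.3.2.1(i)]{hindrysilverman}), while you reduce to a single very ample integral summand of \(A\) first; both are fine, and your explicit remark that \(|mD|\neq\emptyset\) off a nontrivial stable base locus is a small but worthwhile point of care not spelled out in the paper.
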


\begin{proof}
Fix an integer \(m\) with \(\Bs(mD) = \BB(D)\); then \(h_{X,mD} = m \, h_{X,D} + O(1)\) according to the additivity property, and (\ref{positivity:1}) follows.

For (\ref{positivity:2}), according to Lemma \ref{bplusproperties} (\ref{bplusproperties::5}) there exists an ample \(\RR\)-divisor \(A\) so that \(D-A\) is a \(\QQ\)-divisor and \(\BB_+(D) = \BB(D-A)\).  According to (\ref{positivity:1}), we have \(h_{D-A}(P) \geq O(1)\) for \(P\) outside \(\BB(D-A) = \BB_+(D)\).  Since \(h_D = h_{D-A} + h_A + O(1)\), and since \(h_A \geq O(1)\), this proves (\ref{positivity:2}).

At last we prove (\ref{positivity:3}); let \(D\) and \(A\) be as before.  There is a constant \(C_1\) such that \(h_A(P) \leq h_{D}(P) - h_{D-A}(P) + C_1\) for all points \(P\) of \(X(\QQbar)\).  By (\ref{positivity:1}), there is a constant \(C_2\) so that \(h_{D-A}(P) \geq C_2\) for any \(P\) in \(X(\QQbar) \setminus \BB(D-A) = X(\QQbar) \setminus \BB_+(D)\).
Now, if \(P\) is a point of \(X(\QQbar) \setminus \BB_+(D)\) with \(h_D(P) < N\), we have
\[
h_A(P) \leq h_D(P) - h_{D-A}(P) + C_1 \leq h_D(P) + C_1 - C_2 \leq N + C_1 - C_2.
\]

It then follows from the Northcott theorem for the ample divisor \(A\) that there are only finitely many such $P$ with $[\QQ(P):\QQ]<M$ and \(h_D(P) < N\), see Theorem B.3.2(g) and Remark B.3.2.1(i) of~\cite{hindrysilverman}.
\end{proof}

With these results in place, we now construct a canonical height function for an automorphism satisfying Conditions (A) and (B) and which admits an eigendivisor pair.  Suppose that \(f : X \to X\) is an automorphism of a normal projective variety satisfying these  conditions, with \((D_+,D_-)\) an eigendivisor pair for \(f\).
Define functions \(\widehat{h}_{D_+}: X(\overline{\QQ}) \to \RR\) and \(\widehat{h}_{D_-}: X(\overline{\QQ}) \to \RR\) by
\begin{align*}
\widehat{h}_{D_+}(P) &= \lim_{n \to \infty} \frac{1}{\lambda_1(f)^n} h_{D_+}(f^n(P)) \\
\widehat{h}_{D_-}(P) &= \lim_{n \to \infty} \frac{1}{\lambda_1(f^{-1})^n} h_{D_-}(f^{-n}(P))
\end{align*}
The functoriality of the height function yields \(h_{D_\pm}(P) - \lambda_1(f^{\pm 1})^{-1} h_{D_\pm}(f(P)) = O(1)\);  
it follows from an argument of Tate (cf.~\cite[\S 3]{silvermank3}) that both of these limits exist and that \(\widehat{h}_{D_\pm}\) is a height function for \(D_\pm\).
These functions furthermore satisfy the relations
\[
\widehat{h}_{D_+}(f(P)) = \lambda_1(f) \widehat{h}_{D_+}(P), \quad
\widehat{h}_{D_-}(f(P)) = \lambda_1(f^{-1})^{-1} \widehat{h}_{D_-}(P).
\]
Consider the function \(\widehat{h}(P) : X(\overline{\QQ}) \to \RR\) given by
\[
\widehat{h}(P) = \widehat{h}_{D_+}(P) + \widehat{h}_{D_-}(P)
\]
We next develop the properties of \(\widehat{h}(P)\), closely following arguments of Kawaguchi~\cite[Theorem 5.2 and Proposition 5.5]{surface-aut}.

\begin{theorem}
\label{heightfct}
Let $X$ be a normal projective variety over $\QQbar$. Let $f$ be an automorphism of $X$ satisfying Conditions (A) and (B), and suppose that \(f\) admits an eigendivisor pair $(D_+,D_-)$. If $P\in X(\QQbar)$, then the function \(\widehat{h}\) has the following properties: 
\begin{enumerate}
\item\label{heightfct:1} \(\widehat{h}(f^n(P)) + \widehat{h}(f^{-n}(P)) = (\lambda_1(f)^n+ \lambda_1(f)^{-n}) \widehat{h}(P)\) for any $n$;
\item\label{heightfct:2} \(\widehat{h}\) is a height function for the big and nef divisor \(D = D_+ + D_-\);
\item\label{heightfct:3} \(\widehat{h}(P) \geq 0\) if \(P\in X(\QQbar) \setminus \BB_+(D)\);
\item\label{heightfct:4} \(\widehat{h}_{D_+}(P) \geq 0\) and \(\widehat{h}_{D_-}(P) \geq 0\) if \(P\in X(\QQbar) \setminus \BB_+(D)\);
\item\label{heightfct:6} \(\widehat{h}\) satisfies the Northcott property on \(X(\QQbar) \setminus \BB_+(D)\);
\item\label{heightfct:7} If \(P\in X(\QQbar) \setminus \BB_+(D)\), then \(\widehat{h}_{D_+}(P) = 0\) if and only if \(\widehat{h}_{D_-}(P) = 0\) if and only if \(\widehat{h}(P) = 0\) if and only if \(P\) is \(f\)-periodic.
\end{enumerate}
\end{theorem}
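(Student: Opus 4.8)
The plan is to leverage the functional equations satisfied by $\widehat{h}_{D_+}$ and $\widehat{h}_{D_-}$ together with the $f$-invariance of $\BB_+(D)$ from Corollary \ref{cor:invariant-B+}; throughout, write $\lambda = \lambda_1(f) = \lambda_1(f^{-1})$, using Condition (A). Part (\ref{heightfct:1}) is a direct computation: since $\widehat{h}_{D_+}(f^n(P)) = \lambda^n \widehat{h}_{D_+}(P)$ and $\widehat{h}_{D_-}(f^n(P)) = \lambda^{-n}\widehat{h}_{D_-}(P)$ (and the mirror identities for $f^{-n}$), adding $\widehat{h}(f^n(P))$ and $\widehat{h}(f^{-n}(P))$ collects the common factor $\lambda^n + \lambda^{-n}$ in front of $\widehat{h}_{D_+}(P) + \widehat{h}_{D_-}(P) = \widehat{h}(P)$. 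Part (\ref{heightfct:2}) is essentially already recorded in the discussion preceding the theorem: the classes $\nu_\pm$ are nef by definition of a leading eigenvector, hence $D = D_+ + D_-$ is nef; it is big by Condition (B); and $\widehat{h}$ is a height function for $D$ by additivity of the Weil height machine, since each $\widehat{h}_{D_\pm}$ is a height function for $D_\pm$.

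For (\ref{heightfct:3}), I would fix a representative $h_D$ and a constant $C$ with $|\widehat{h} - h_D| \le C$, and then, using Lemma \ref{positivity}(\ref{positivity:2}), enlarge $C$ so that $\widehat{h}(Q) \ge -C$ for every $Q \in X(\QQbar) \setminus \BB_+(D)$. Since $\BB_+(D)$ is $f$-invariant and $f$ is an automorphism, $f^{\pm n}(P) \notin \BB_+(D)$ for all $n$ whenever $P \notin \BB_+(D)$, so (\ref{heightfct:1}) gives $\widehat{h}(P) = (\lambda^n + \lambda^{-n})^{-1}\bigl(\widehat{h}(f^n(P)) + \widehat{h}(f^{-n}(P))\bigr) \ge -2C/(\lambda^n+\lambda^{-n})$; letting $n \to \infty$ forces $\widehat{h}(P) \ge 0$. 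Part (\ref{heightfct:4}) then follows: applying (\ref{heightfct:3}) to $f^n(P)$ gives $0 \le \widehat{h}(f^n(P)) = \lambda^n\widehat{h}_{D_+}(P) + \lambda^{-n}\widehat{h}_{D_-}(P)$, so $\widehat{h}_{D_+}(P) \ge -\lambda^{-2n}\widehat{h}_{D_-}(P) \to 0$, and running the same argument with $f^{-n}(P)$ bounds $\widehat{h}_{D_-}(P)$ below by $0$. Part (\ref{heightfct:6}) is immediate from Lemma \ref{positivity}(\ref{positivity:3}) for the big divisor $D$ combined with $\widehat{h} = h_D + O(1)$.

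The substantive step is (\ref{heightfct:7}), which I would organize as a cycle of implications. If $P$ is $f$-periodic, say $f^k(P) = P$, then $\widehat{h}_{D_+}(P) = \lambda^k \widehat{h}_{D_+}(P)$ forces $\widehat{h}_{D_+}(P) = 0$ since $\lambda > 1$, and symmetrically $\widehat{h}_{D_-}(P) = 0$, hence $\widehat{h}(P) = 0$. If $\widehat{h}(P) = 0$, then $\widehat{h}_{D_+}(P) = \widehat{h}_{D_-}(P) = 0$ by (\ref{heightfct:4}), being two nonnegative numbers summing to zero. Finally, suppose $\widehat{h}_{D_+}(P) = 0$ with $P \notin \BB_+(D)$: the forward iterates satisfy $\widehat{h}(f^n(P)) = \lambda^{-n}\widehat{h}_{D_-}(P) \to 0$ as $n \to \infty$, they all lie in $X(\QQbar) \setminus \BB_+(D)$, and they are defined over a fixed number field (generated by fields of definition of $P$ and of $f$), so by the Northcott property (\ref{heightfct:6}) the forward orbit of $P$ is finite; since $f$ is an automorphism this means $P$ is periodic. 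The case $\widehat{h}_{D_-}(P) = 0$ is handled identically with backward iterates, and this closes the loop of equivalences.

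The main obstacle I anticipate is the pair (\ref{heightfct:4})/(\ref{heightfct:7}): positivity of the \emph{individual} eigenheights $\widehat{h}_{D_\pm}$ off $\BB_+(D)$ cannot be read off from positivity of $D_\pm$ (which need not be effective even after scaling) and must be extracted from the functional equation (\ref{heightfct:1}), the $f$-invariance of $\BB_+(D)$, and a limiting argument; likewise the implication ``vanishing of one eigenheight $\Rightarrow$ periodicity'' is precisely where invertibility of $f$ and the Northcott estimate on the complement of the augmented base locus are both used in an essential way.
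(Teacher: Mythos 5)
Your proof is correct and follows essentially the same strategy as the paper's: the functional equation for $\widehat{h}$, the $f$-invariance of $\BB_+(D)$, the lower bound from Lemma \ref{positivity}, and Northcott off the augmented base locus are deployed in the same way. The one small deviation is in part (\ref{heightfct:7}), where you derive $\widehat{h}(P)=0 \Rightarrow \widehat{h}_{D_\pm}(P)=0$ directly from (\ref{heightfct:4}) rather than via (\ref{heightfct:1}) and Northcott as the paper does; this is a harmless and slightly cleaner route to the same conclusion.
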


\begin{proof}
Let $\lambda=\lambda_1(f)$ and let $D=D_++D_-$, which is big by Condition (B) and nef since it is a sum of two nef classes. By Condition (A), we know $\lambda_1(f^{-1})=\lambda$. It then follows directly from the definitions that $\widehat{h}(f^\pm(P))=\lambda^\pm\widehat{h}_{D_+}(P)+\lambda^{\mp}\widehat{h}_{D_-}(P)$; property (\ref{heightfct:1}) then follows.

(\ref{heightfct:2}) is a consequence of the fact that \(\widehat{h}_{D_+}\) and \(\widehat{h}_{D_-}\) are themselves height functions for \(D_+\) and \(D_-\).

For (\ref{heightfct:3}), Lemma \ref{positivity} (\ref{positivity:2}) implies that there is a constant \(C\) so that \(\widehat{h}(P) \geq C\) for any \(P\) in \(X(\QQbar) \setminus \BB_+(D)\).  Furthermore, for any such point \(P\), Corollary \ref{cor:invariant-B+} shows that \(f^n(P)\in X(\QQbar) \setminus \BB_+(D)\) for all $n\in\ZZ$, so that \(\widehat{h}(f^n(P)) \geq C\)  for all $n$. 
Then by property (\ref{heightfct:1}), for any integer \(n\),
\[
\widehat{h}(P) = \frac{1}{\lambda^n + \lambda^{-n}} \left( \widehat{h}(f^n(P)) + \widehat{h}(f^{-n}(P)) \right) \geq \frac{1}{\lambda^n + \lambda^{-n}} (2 C),
\]
and the non-negativity of \(\widehat{h}(P)\) follows by taking \(n \to \infty\).  This finishes (\ref{heightfct:3}).

Now, since \(\widehat{h}(P) \geq 0\) for any \(P\in X(\QQbar) \setminus\BB_+(D)\), we have $\widehat{h}_{D_+}(P) \geq -\widehat{h}_{D_-}(P)$ for all such $P$. So,
\[
\widehat{h}_{D_+}(P) = \lambda^{-n} \widehat{h}_{D_+}(f^n(P)) \geq -\lambda^{-n} \widehat{h}_{D_-}(f^n(P)) = -\lambda^{-2n} \widehat{h}_{D_-}(P),
\]
and the non-negativity of \(\widehat{h}_{D_+}(P)\) follows by taking the limit as \(n\) tends to infinity; non-negativity of \(\widehat{h}_{D_-}(P)\) follows from a similar argument. This handles (\ref{heightfct:4}).

Lemma \ref{positivity} (\ref{positivity:3})~combined with property (\ref{heightfct:2}) immediately implies (\ref{heightfct:6}).

We now turn to (\ref{heightfct:7}). First, if $P$ is $f$-periodic, then \(f^n(P) = P\) for some $n$, which implies directly from the definitions that \(\widehat{h}_{D_+}(P)\) and \(\widehat{h}_{D_-}(P)\) both vanish, and hence \(\widehat{h}(P) = 0\).  On the other hand, suppose that \(\widehat{h}(P) = 0\) for some \(P\) in \(X(\QQbar) \setminus \BB_+(D)\). Then (\ref{heightfct:1}) tells us $\widehat{h}(f^n(P)) + \widehat{h}(f^{-n}(P))=0$ for all $n$. Since $\widehat{h}(f^n(P))$ and $\widehat{h}(f^{-n}(P))$ are non-negative by (\ref{heightfct:3}), we see \(\widehat{h}(f^n(P)) = 0\) for all \(n\). By Corollary \ref{cor:invariant-B+}, the locus $\BB_+(D)$ is $f$-invariant, so $\{f^n(P)\mid n\in\ZZ\}$ is contained in $X(\QQbar) \setminus\BB_+(D)$. Since the $f^n(P)$ are of bounded degree over \(\QQ\), the Northcott property (\ref{heightfct:6}) tells us the set of \(f^n(P)\) is finite, and so \(P\) is $f$-periodic.

To finish the proof of (\ref{heightfct:7}), it remains to show that if $P\in X(\QQbar) \setminus \BB_+(D)$ and $h_{D_+}(P)=0$, then $P$ is $f$-periodic; the assertion that $h_{D_-}(P)=0$ implies $P$ is $f$-periodic will follow similarly. If $\widehat{h}_{D_+}(P)=0$ and $n>0$ then $\widehat{h}(f^n(P))=\widehat{h}_{D_+}(f^n(P))+\widehat{h}_{D_-}(f^n(P))=\lambda^n\widehat{h}_{D_+}(P)+\lambda^{-n}\widehat{h}_{D_-}(P)=\lambda^{-n}\widehat{h}_{D_-}(P)\leq\widehat{h}_{D_-}(P)$. Since we have fixed our point $P$, we can view $\widehat{h}_{D_-}(P)$ as a constant and we have bounded $\widehat{h}(f^n(P))$ for all $n$. Again, since the set $\{f^n(P)\}$ has bounded degree and is contained in $X(\QQbar) \setminus\BB_+(D)$, it follows from (\ref{heightfct:6}) that $\{f^n(P)\}$ is finite, and so $P$ is $f$-periodic.
\end{proof}

\begin{theorem}
\label{ksforaandb}
Suppose that \(f : X \to X\) is an automorphism of a normal projective variety satisfying conditions (A), (B), and (C).  Then the Kawaguchi--Silverman conjecture holds for \(f\).
\end{theorem}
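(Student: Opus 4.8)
The plan is to reduce Conjecture \ref{conj:main} for $(X,f)$ to a statement about the canonical height function $\widehat{h} = \widehat{h}_{D_+} + \widehat{h}_{D_-}$ constructed above, using Theorem \ref{heightfct}. Condition (C) guarantees (via Corollary \ref{cor:eigenvector pairs exist}) that an eigendivisor pair $(D_+, D_-)$ actually exists, so that Theorem \ref{heightfct} applies; Conditions (A) and (B) are exactly the hypotheses of that theorem. By Remark \ref{rmk:lambda1 is 1}, it suffices to show that for any $P \in X(\QQbar)$ with Zariski-dense orbit, one has $\underline{\alpha}_f(P) \geq \lambda_1(f)$, since the reverse inequality $\overline{\alpha}_f(P) \leq \lambda_1(f)$ is automatic and then $\alpha_f(P) = \lambda_1(f)$.

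First I would observe that since $P$ has dense orbit, $P$ is not $f$-periodic and, by Corollary \ref{cor:invariant-B+} (which uses Condition (B)), $P \notin \BB_+(D)$ where $D = D_+ + D_-$; moreover the entire orbit $\{f^n(P) : n \in \ZZ\}$ avoids $\BB_+(D)$ by $f$-invariance of that locus. Then Theorem \ref{heightfct} (\ref{heightfct:7}) shows $\widehat{h}_{D_+}(P) > 0$ (a non-periodic point off $\BB_+(D)$ cannot have $\widehat{h}_{D_+}(P) = 0$). Using the eigenfunction relation $\widehat{h}_{D_+}(f^n(P)) = \lambda_1(f)^n \widehat{h}_{D_+}(P)$ together with the non-negativity $\widehat{h}_{D_-}(f^n(P)) \geq 0$ from Theorem \ref{heightfct} (\ref{heightfct:4}), we get
\[
\widehat{h}(f^n(P)) = \lambda_1(f)^n \widehat{h}_{D_+}(P) + \widehat{h}_{D_-}(f^n(P)) \geq \lambda_1(f)^n \widehat{h}_{D_+}(P)
\]
for all $n \geq 0$. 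Since $\widehat{h}$ is a height function for the big and nef class $D$ (Theorem \ref{heightfct} (\ref{heightfct:2})), and $D$ is nef hence dominated (up to $O(1)$) by a multiple of any ample class $H$, we have $h_H^+(f^n(P)) \geq c\,\widehat{h}(f^n(P)) - C \geq c\,\lambda_1(f)^n \widehat{h}_{D_+}(P) - C$ for constants $c > 0$, $C$ independent of $n$. Taking $n$-th roots and letting $n \to \infty$, since $\widehat{h}_{D_+}(P)$ is a fixed positive constant, yields $\underline{\alpha}_f(P) \geq \lambda_1(f)$, as desired.

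The one point requiring care — and the main obstacle — is the comparison between $\widehat{h}$ and the Weil height $h_H$ for a fixed ample $H$: because $D$ is only big and nef (not ample), one cannot directly bound $h_H$ below by $\widehat{h}$ everywhere, only outside $\BB_+(D)$, which is precisely why Theorem \ref{heightfct} (\ref{heightfct:3}) and the fact that the whole orbit lies in $X(\QQbar) \setminus \BB_+(D)$ are essential. Concretely, write $H' = D + (\text{small ample})$ so that $H'$ is ample and $H' - D$ is effective; then $h_{H'} \geq h_D + O(1)$ outside nothing and $h_{H} \geq c\, h_{H'} + O(1)$ for suitable $c>0$, giving the needed lower bound $h_H^+(f^n(P)) \gtrsim \widehat{h}(f^n(P))$ along the orbit. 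Assembling these comparisons and invoking the positivity of $\widehat{h}_{D_+}(P)$ completes the argument; no further input from birational geometry is needed beyond what has already been set up.
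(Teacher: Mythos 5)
Your proposal is correct and follows essentially the same route as the paper: use Condition (C) and Corollary \ref{cor:eigenvector pairs exist} to produce an eigendivisor pair, show the orbit avoids $\BB_+(D)$ and $\widehat{h}_{D_+}(P)>0$ via Corollary \ref{cor:invariant-B+} and Theorem \ref{heightfct}(\ref{heightfct:7}), and conclude that $\widehat{h}(f^n(P))$ grows like $\lambda_1(f)^n$. The only cosmetic difference is that you spell out the comparison between $h_H$ and the big-and-nef height $h_D=\widehat{h}$ by hand, whereas the paper invokes \cite[Remark 2.2]{mss} for the inequality $\alpha_f(P)\geq\liminf_n h_D^+(f^n(P))^{1/n}$; these are the same estimate.
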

\begin{proof}
By Corollary~\ref{cor:eigenvector pairs exist}, there exists an eigendivisor pair \((D_+,D_-)\), with \(D = D_+ + D_-\) big.
Let $P\in X(\QQbar)$ have dense orbit under $f$. By Corollary \ref{cor:invariant-B+}, we know $P$ does not lie in \(\BB_+(D)\). Since $P$ is not $f$-periodic, we know from Theorem~\ref{heightfct} (\ref{heightfct:7}) that \(\widehat{h}_{D_+}(P)\) and \(\widehat{h}_{D_-}(P)\) are both positive. Then
\[
\alpha_f(P)\geq\liminf_{n\to\infty}h_D^+(f^n(P))^{1/n}=\liminf_{n\to\infty}(\lambda_1(f)^n\widehat{h}_{D_+}(P)+\lambda_1(f)^{-n}\widehat{h}_{D_-}(P))^{1/n}=\lambda_1(f),
\]
where the inequality follows from \cite[Remark 2.2]{mss} and the next equality from Theorem \ref{heightfct} (\ref{heightfct:2}) which tells us that $h_D=\widehat{h}$.
\end{proof}

It follows from Lemma~\ref{aandbhold} and Theorem~\ref{ksforaandb} that the Kawaguchi--Silverman conjecture holds for automorphisms of hyper-K\"ahler varieties; note that Condition (C) holds since a hyper-K\"ahler variety is geometrically simply connected.  The next lemma shows that it also holds for automorphisms of smooth varieties of Picard rank \(2\), slightly extending~\cite[Theorem 4.2(ii)]{shibatapic2}.

\begin{theorem}
Suppose that \(X\) is a smooth projective variety, \(\rho(X) = 2\) and \(f : X \to X\) is an automorphism.  Then the Kawaguchi--Silverman conjecture holds for \(f\).
\end{theorem}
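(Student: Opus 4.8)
The plan is to reduce to situations already settled. By Remark~\ref{rmk:lambda1 is 1} I may assume \(\lambda_1(f)>1\), and then Lemma~\ref{aandbhold}(2) tells me that \(f\) satisfies Conditions (A) and (B). If in addition \(h^1(X,\O_X)=0\), then Condition (C) holds as well, and Theorem~\ref{ksforaandb} applies verbatim: the canonical height machinery of Section~\ref{sec:HK} gives the conjecture for \(f\). So the only remaining case is \(h^1(X,\O_X)=q>0\), which I treat through the Albanese map.

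Suppose \(q>0\). By the remark following Lemma~\ref{aandbhold} we have \(K_X\equiv 0\); hence, by the Beauville--Bogomolov decomposition theorem~\cite{beauville}, a finite \'etale cover of \(X\) is a product of a torus with simply connected factors, and tracing this through (the Albanese of such a product being the projection onto the torus) shows that the Albanese morphism \(a\colon X\to A:=\Alb(X)\) is surjective. By functoriality of the Albanese, \(f\) induces an automorphism \(g\colon A\to A\) (a translation composed with a group automorphism) with \(a\circ f=g\circ a\). Since \(a\) is surjective and \(A\) is smooth, \(a^\ast\colon N^1(A)_\RR\to N^1(X)_\RR\) is injective and intertwines \(g^\ast\) with \(f^\ast\); in particular \(1\le\rho(A)\le\rho(X)=2\).

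Next I claim \(\rho(A)=2\). Letting \(\mu_1,\mu_2\) be the eigenvalues of \(f^\ast\) on the rank-\(2\) lattice \(N^1(X)\), we have \(|\mu_1\mu_2|=|\det f^\ast|=1\) and \(\max(|\mu_1|,|\mu_2|)=\lambda_1(f)>1\), so \(\{|\mu_1|,|\mu_2|\}=\{\lambda_1(f),\lambda_1(f)^{-1}\}\); in particular \(f^\ast\) has no eigenvalue of absolute value \(1\). If \(\rho(A)\) were \(1\), then \(a^\ast N^1(A)_\RR\) would be an \(f^\ast\)-invariant line, and \(f^\ast\) would act on it by the same integral, invertible scalar \(\pm 1\) by which \(g^\ast\) acts on \(N^1(A)\cong\ZZ\)---a contradiction. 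Hence \(\rho(A)=\rho(X)=2\), so \(a^\ast\) is an isomorphism conjugating \(g^\ast\) to \(f^\ast\), and therefore \(\lambda_1(g)=\operatorname{SpecRad}(g^\ast)=\operatorname{SpecRad}(f^\ast)=\lambda_1(f)\).

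Finally, \(g\) is an automorphism of the abelian variety \(A\), so Conjecture~\ref{conj:main} is known for \((A,g)\) by~\cite{ks-jordan,silverman-ab-vars}. For \(P\in X(\QQbar)\) with dense \(f\)-orbit, the point \(a(P)\) has dense \(g\)-orbit, and Lemma~\ref{l:semi-ample-ht} gives
\[
\alpha_f(P)\ \ge\ \alpha_g(a(P))\ =\ \lambda_1(g)\ =\ \lambda_1(f);
\]
by Remark~\ref{rmk:lambda1 is 1} this forces \(\alpha_f(P)=\lambda_1(f)\), proving the conjecture for \((X,f)\). (When \(\dim A<\dim X\) one may instead invoke Theorem~\ref{thm:fibered-over-dim-one-less}, since \(\lambda_1(f)=\lambda_1(g)\) forces \(\lambda_1(f\vert_a)\le\lambda_1(g)\).) The one step that is not purely formal is this case \(q>0\): the crux is recognizing that \(K_X\equiv 0\) makes the Albanese map a surjection onto an abelian variety and that this target has the \emph{same} Picard rank as \(X\), which is exactly what lets the dynamical degree---and hence the whole conjecture---be transported to \(A\). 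The case \(h^1(X,\O_X)=0\) is immediate from the work already assembled in Sections~\ref{preliminaries} and~\ref{sec:HK}.
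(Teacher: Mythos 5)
Your proof is correct and shares the paper's broad strategy: reduce to \(\lambda_1(f)>1\), deduce \(K_X\equiv 0\), show the Albanese map \(a\colon X\to A\) is surjective, and exploit the fact that \(f^\ast\) has no eigenvalue of absolute value \(1\) to constrain \(\rho(A)\). The middle of your argument, however, takes a different route. The paper obtains surjectivity of \(a\) by invoking Nakayama's abundance result (to pass from \(K_X\equiv 0\) to \(K_X\) torsion, hence \(\kappa(X)=0\)) together with Kawamata's characterization theorem (giving \(a\) surjective with connected fibers), and then runs a three-way case split: \(\dim A=0\); \(a\) finite (whence \(a\) is an isomorphism because the fibers are connected, so \(X\) is abelian and Silverman applies); and \(a\) not finite (whence \(\rho(X)\geq\rho(A)+1\) forces \(\rho(A)=1\), which is impossible). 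You instead cite the Beauville--Bogomolov decomposition for the surjectivity of \(a\), and then argue directly that \(\rho(A)=2\); this gives \(\lambda_1(g)=\lambda_1(f)\) and lets Lemma \ref{l:semi-ample-ht} transport the conjecture from \((A,g)\) to \((X,f)\) in one stroke. The upshot is a shorter endgame: you never need the connected-fibers statement, nor do you need to recognize \(X\) as literally abelian in the finite case, since the height-comparison lemma does the transfer uniformly. Both routes rest on the same pivotal observation — that \(g^\ast\) would have to act by \(\pm 1\) on a rank-one \(N^1(A)\), contradicting the spectrum of \(f^\ast\) — so this is a genuine but modest streamlining rather than a new method.
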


\begin{proof}
The map \(\phi\) induces an automorphism \(g : \Alb(X) \to \Alb(X)\) such that the diagram
\[
\xymatrix{
X \ar[r]^f \ar[d]^a & X \ar[d]^a \\
\Alb(X) \ar[r]^g & \Alb(X)
}
\]
commutes.  The proof of Lemma~\ref{aandbhold} shows that neither eigenvalue of \(f^\ast : N^1(X)_\RR \to N^1(X)_\RR\) is equal to \(1\), and so it must be that \(K_X \equiv 0\).  A form of abundance due to Nakayama~\cite{nakayama} implies that \(K_X\) is torsion in \(\Pic(X)\), so that \(\kappa(X) = 0\).  Since \(\kappa(X) = 0\), a result of Kawamata (independent of the conjectures of the MMP) implies that \(a\) is surjective with connected fibers~\cite{kawamataabelian}.

If \(\dim \Alb(X) = 0\), then \(h^1(X,\O_X) = 0\), so that Condition (C) is satisfied.  In this case, Conjecture~\ref{conj:main} follows from Theorem~\ref{ksforaandb}.
If \(a\) is finite, it must be an isomorphism, since \(a\) has connected fibers.  Then \(X\) is an abelian variety and the conjecture holds by~\cite{silverman-ab-vars}.

Suppose at last that \(a\) is not finite and that \(\dim \Alb(X) > 0\).  It must be that \(\rho(X) \geq \rho(\Alb(X))+1\), since for any divisor \(D\) on \(Y\), \(\pi^\ast D\) has intersection \(0\) with a curve in the fiber of \(a\).  Since \(\rho(X) = 2\), we have \(\rho(\Alb(X)) = 1\).  Taking \(H\) to be a generator of $\Pic(\Alb(X))$, it must be that \(a^\ast H\) is a \(1\)-eigenvector for \(\phi^\ast\), but neither eigenvalue of \(\phi^\ast\) is equal to \(1\), so this case is impossible.
\end{proof}

Notice that if \(\rho(X) = 2\) and \(h^1(X,\O_X) = 0\), we have proved something even stronger: since \(D = D_+ + D_-\) may be taken to be ample, \(\BB_+(D) = \emptyset\), and so \(\alpha_f(P) = \lambda_1(f)\) for every \(\QQbar\)-point \(P\), without assuming the orbit is Zariski-dense.

Now, suppose that \(f : X \to X\) is an automorphism of a normal, projective variety satisfying \(\lambda_1(f) > 1\), and that \(V\) is an irreducible subvariety of \(f\) periodic under \(f\). Then \(f^n\) maps \(V\) to itself, and so induces an automorphism \(\widetilde{f^n} : \widetilde{V} \to \widetilde{V}\), where \(\widetilde{V}\) is the normalization of \(V\).  Then set
\[
\lambda_1(f \vert_V) = \lambda_1(\widetilde{f^n} \vert_{\widetilde{V}})^{1/n}.
\]

\begin{lemma}
\label{lem:autorestrict}
Let \(f : X \to X\) be an automorphism of a normal projective variety satisfying Condition (B), and let \((\nu_+,\nu_-)\) be an eigenvector pair.  If \(\lambda_1(f\vert_V) < \lambda_1(f)\), then \(\nu_+\vert_V = 0\).
\end{lemma}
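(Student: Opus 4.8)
The plan is to prove the contrapositive: assuming $\nu_+\vert_V \neq 0$, I will show $\lambda_1(f\vert_V) \geq \lambda_1(f)$. Write $\lambda = \lambda_1(f)$. Since any dynamical degree of an automorphism is at least $1$ — so in particular $\lambda_1(f\vert_V)\geq 1$ — we may assume $\lambda > 1$, for otherwise the hypothesis $\lambda_1(f\vert_V) < \lambda_1(f) = 1$ is vacuous. (It seems worth noting that beyond the eigenvector relation $f^\ast \nu_+ \equiv \lambda\,\nu_+$ with $\nu_+$ nef, Condition (B) and the data of $\nu_-$ play no role in this particular statement.)

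First I would set up the comparison between $f$ on $X$ and its restriction to $V$. Fix $n \geq 1$ with $f^n(V) = V$, let $g := \widetilde{f^n} : \widetilde V \to \widetilde V$ be the induced automorphism of the normalization, and let $\iota : \widetilde V \to X$ be the composition of the normalization morphism $\widetilde V \to V$ with the closed immersion $V \hookrightarrow X$, so that $\iota \circ g = f^n \circ \iota$. Since $\widetilde V$ is a normal projective variety and $g$ is an automorphism, the first dynamical degree $\lambda_1(g)$ is the spectral radius of $g^\ast$ acting on $N^1(\widetilde V)_\RR$, by the discussion recalled at the start of Section~\ref{preliminaries}; and by definition $\lambda_1(f\vert_V) = \lambda_1(g)^{1/n}$.

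Next I would pull back the eigenvector relation. Iterating gives $(f^n)^\ast \nu_+ \equiv \lambda^n \nu_+$ in $N^1(X)_\RR$, and applying $\iota^\ast$ yields $g^\ast(\iota^\ast \nu_+) = \iota^\ast (f^n)^\ast \nu_+ = \lambda^n\,\iota^\ast \nu_+$ in $N^1(\widetilde V)_\RR$. The class $\iota^\ast \nu_+$ is exactly what we mean by $\nu_+\vert_V$, and it is nef since nefness is preserved under pullback by a morphism (though we will only need that it is nonzero). By assumption $\iota^\ast \nu_+ \neq 0$, so $\lambda^n$ is an eigenvalue of $g^\ast$ with a nonzero eigenvector; hence the spectral radius of $g^\ast$ is at least $\lambda^n$, i.e.\ $\lambda_1(g) \geq \lambda^n$. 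Therefore $\lambda_1(f\vert_V) = \lambda_1(g)^{1/n} \geq \lambda = \lambda_1(f)$, contradicting the hypothesis, and we conclude $\nu_+\vert_V = 0$.

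The argument is essentially formal once the definitions are unwound, so I do not anticipate a genuine obstacle. The two points that require the most care are (i) interpreting $\nu_+\vert_V$ as the pullback $\iota^\ast \nu_+$ to the normalization $\widetilde V$, so that the numerical statements are meaningful even when $V$ is singular, and (ii) invoking the identification of $\lambda_1$ of a morphism of normal projective varieties with the spectral radius of its action on $N^1$, which is what converts the existence of the nonzero eigenvector $\iota^\ast\nu_+$ with eigenvalue $\lambda^n$ into the bound $\lambda_1(g)\geq\lambda^n$. One should also observe that the conclusion is independent of the choice of $n$ with $f^n(V)=V$, since $(g^k)^\ast = (g^\ast)^k$ for an automorphism $g$, so that replacing $n$ by a multiple changes nothing.
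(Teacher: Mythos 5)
Your proof is correct and follows essentially the same argument as the paper: pull back the eigenvector relation along the normalization $\widetilde V \to V \hookrightarrow X$, observe that $\iota^\ast\nu_+$ is then an eigenvector for $(\widetilde{f^n})^\ast$ with eigenvalue $\lambda_1(f)^n$, and compare with the spectral radius. The only cosmetic differences are that the paper reduces to $n=1$ at the outset and argues directly rather than by contrapositive, and your remark that Condition (B) and $\nu_-$ are not actually used here is accurate.
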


\begin{proof}
The pair \((\nu_+,\nu_-)\) is also an eigenvector pair for \(f^n\), so without loss of generality we can assume that \(n = 1\).
Let \(i : \widetilde{V} \to X\) be the composition of the normalization of \(V\) with its inclusion into \(X\).  Then
\((\widetilde{f})^\ast (i^\ast \nu_+) = i^\ast f^\ast \nu_+ = i^\ast (\lambda_1(f) \nu_+) = \lambda_1(f) (i^\ast \nu_+)\), so that \(i^\ast \nu_+\) is a \(\lambda_1(f)\)-eigenvector for \(\widetilde{f}^\ast\).  Since the spectral radius of \(\widetilde{f}^\ast\) is \(\lambda_1(f \vert_V) < \lambda_1(f)\), this is impossible unless \(i^\ast \nu_+ = 0\), which means that \(\nu_+ \vert_V = 0\). 
\end{proof}

\begin{definition}
Suppose that \(f : X \to X\) is an automorphism of a normal projective variety.  Then \(E(f)\) is the subset of \(X\) defined by
\[
E(f) = \bigcup \set{ V : \text{$\dim V \geq 1$, $V$ is $f$-periodic, $\lambda_1(f \vert_V) < \lambda_1(f)$, and $\lambda_1(f^{-1} \vert_V) < \lambda_1(f^{-1})$}}.
\]
\end{definition}

\begin{theorem}
\label{dbigperiodic}
Suppose that \(X\) is a normal projective variety and that \(f : X \to X\) is an automorphism satisfying Condition (B).  Then \(E(f)\) is not Zariski dense in \(X\).
\end{theorem}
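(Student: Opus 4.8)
The plan is to show that \(E(f)\) is contained in the augmented base locus \(\BB_+(\nu)\) of the big class \(\nu = \nu_+ + \nu_-\); since \(\nu\) is big by Condition (B), Lemma~\ref{bplusproperties}~(\ref{bplusproperties::2}) then gives that \(\BB_+(\nu)\) is a proper Zariski-closed subset of \(X\), and the theorem follows immediately. So the whole problem reduces to controlling which periodic subvarieties can occur in the union defining \(E(f)\).

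First I would dispose of the degenerate case: if \(\lambda_1(f) = 1\) there is no positive-dimensional \(V\) with \(\lambda_1(f\vert_V) < \lambda_1(f)\), since dynamical degrees are always at least \(1\); hence \(E(f) = \emptyset\). So assume \(\lambda_1(f) > 1\), whence \(\lambda_1(f^{-1}) > 1\) by Theorem~\ref{dyndegstuff}~(\ref{dyndegstuff::1}), and observe that \((\nu_-,\nu_+)\) is an eigenvector pair for \(f^{-1}\) with big sum, so \(f^{-1}\) also satisfies Condition (B). Now fix a subvariety \(V\) occurring in the union defining \(E(f)\) and let \(i : \widetilde V \to X\) be the composite of the normalization of \(V\) with its inclusion. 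Applying Lemma~\ref{lem:autorestrict} to \(f\) (using \(\lambda_1(f\vert_V) < \lambda_1(f)\)) yields \(i^\ast\nu_+ = 0\), and applying it to \(f^{-1}\) with the pair \((\nu_-,\nu_+)\) (using \(\lambda_1(f^{-1}\vert_V) < \lambda_1(f^{-1})\)) yields \(i^\ast\nu_- = 0\); hence \(i^\ast\nu = 0\) in \(N^1(\widetilde V)_\RR\).

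The heart of the matter is then the claim that any positive-dimensional subvariety \(V \subseteq X\) with \(i^\ast\nu = 0\) lies in \(\BB_+(\nu)\). To see this, fix a divisor \(D\) with numerical class \(\nu\), and use Lemma~\ref{bplusproperties}~(\ref{bplusproperties::5}) to choose an ample \(\RR\)-divisor \(A\) with \(D-A\) a \(\QQ\)-divisor and \(\BB_+(\nu) = \BB(D-A)\). If \(V\) were not contained in \(\BB(D-A)\), then by definition of the stable base locus there is an integer \(m\) with \(m(D-A)\) Cartier and a section of \(\O_X(m(D-A))\) not vanishing identically on \(V\); pulling it back along \(i\) gives a nonzero section of \(i^\ast\O_X(m(D-A))\) on the integral variety \(\widetilde V\), hence an effective Cartier divisor \(G\) on \(\widetilde V\) with numerical class \([G] = i^\ast\bigl(m(D-A)\bigr) = -m\, i^\ast A\) (here we use \(i^\ast\nu = 0\)). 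Since \(i\) is a finite morphism, \(i^\ast A\) is an ample class on \(\widetilde V\); intersecting with \((i^\ast A)^{\dim V - 1}\) then gives
\[
0 \le G \cdot (i^\ast A)^{\dim V - 1} = -m\,(i^\ast A)^{\dim V} < 0,
\]
a contradiction. Hence \(V \subseteq \BB_+(\nu)\); doing this for every \(V\) in the union shows \(E(f) \subseteq \BB_+(\nu)\), which is not Zariski dense. (As an alternative to this self-contained positivity argument, the claim can be deduced from the description of \(\BB_+\) via restricted volumes in \cite{elmnp1}, though that would require passing to a resolution since the reference is stated for smooth varieties.)

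I expect the only genuine content to be the positivity claim of the previous paragraph; everything else is formal bookkeeping with augmented base loci and with Lemma~\ref{lem:autorestrict}. The one point that calls for a little care is verifying that the effective Cartier divisor \(G\) on \(\widetilde V\) really carries the numerical class \(-m\, i^\ast A\) — i.e.\ that pullback of divisor classes along the finite morphism \(i\) behaves as expected, and that \(i^\ast A\) remains ample (equivalently, big and nef, which is all one needs for \((i^\ast A)^{\dim V} > 0\)) — but both are standard facts once one notes that \(i\) is finite onto its image and the normalization map is finite.
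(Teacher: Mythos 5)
Your proof follows the same route as the paper's: apply Lemma~\ref{lem:autorestrict} to both \(f\) and \(f^{-1}\) to force \(\nu_+\vert_V = \nu_-\vert_V = 0\) for every component \(V\) of \(E(f)\), conclude that \(V \subset \BB_+(\nu)\), and invoke the bigness of \(\nu\). The paper asserts ``which implies that \(V \subset \BB_+(\nu)\)'' without proof; your intersection-theoretic argument (pulling a section of \(\O_X(m(D-A))\) back along the finite map \(i : \widetilde V \to X\), noting the resulting effective Cartier divisor has numerical class \(-m\,i^\ast A\) because \(i^\ast\nu = 0\), and deriving a contradiction from the ampleness of \(i^\ast A\)) is a correct, self-contained justification of exactly that step.
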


\begin{proof}
Let \((\nu_+,\nu_-)\) be an eigenvector pair for \(f\) with
\(\nu = \nu_+ + \nu_-\) big, and suppose that \(V\) is irreducible and
\(f\)-periodic.  By Lemma~\ref{lem:autorestrict}, we have
\(\nu_+\vert_V = \nu_-\vert_V = 0\) in \(N^1(V)\), which implies that
\(V \subset \BB_+(\nu)\). Thus \(E(f) \subseteq \BB_+(\nu)\). Since \(\nu\) is big, \(\BB_+(\nu)\) is a proper Zariski-closed subset of \(X\), and the claim follows.
\end{proof}

\begin{remark}
If \(\lambda_1(f\vert_V) = 1\), then \(\lambda_{\dim V -1}(f\vert_V) = 1\) by log concavity of dynamical degrees, so \(\lambda_1(f^{-1}\vert_V) = 1\) by Theorem \ref{dyndegstuff} (\ref{dyndegstuff::1}). As a result, \(\BB_+(\nu)\) contains all subvarieties of \(X\) for which the dynamical degree \(\lambda_1(f\vert_V)\) drops to \(1\).
\end{remark}

\begin{example}
Let \(g : S \to S\) be an automorphism of a K3 surface satisfying \(\lambda_1(g) > 1\), and let \(f = g \times \id: S \times \PP^1 \to S \times \PP^1\), which satisfies \(\lambda_1(f) = \lambda_1(g)\).  If \(p\) is any periodic point of \(g\), then \(V = p \times \PP^1\) is \(f\)-periodic, and satisfies \(\lambda(f \vert_V) = 1\), so that \(V \subset E(f)\).  Since the \(g\)-periodic points are dense on \(S\), the set \(E(f)\) is Zariski dense.  However, \(f\) does not satisfy Condition (B), so Theorem~\ref{dbigperiodic} is not applicable.
\end{example}

\begin{remark}
When \(f\) satisfies Condition (B), we do not know in general whether \(E(f) = \BB_+(\nu)\), or whether \(E(f)\) is always Zariski closed.
\end{remark}

\begin{proposition}
\label{prop:descend-lambda-geq1}
Suppose that \(X\) has klt singularities (e.g.\ that \(X\) is smooth), that \(K_X \equiv 0\), and that \(f : X \to X\) is an automorphism of satisfying \(\lambda_1(f) > 1\) and Condition (B).  Then there exists a birational morphism \(\pi : X \to Y\) such that \(f\) descends to an automorphism \(g : Y \to Y\), and \(\pi\) contracts every connected component of \(E(f)\) to a point.
\end{proposition}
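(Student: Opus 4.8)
The plan is to take for $\pi$ the contraction attached to the nef and big $\RR$-class $\nu=\nu_++\nu_-$ provided by Condition (B) (fix an eigenvector pair making $\nu$ big), and then to check that $f$ descends along $\pi$ and that $\pi$ collapses $E(f)$.

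\emph{Step 1 (construction of $\pi$).} Since $X$ is klt and $K_X\equiv 0$, the $\RR$-divisor $\nu-K_X\equiv\nu$ is nef and big, so the Kawamata--Shokurov base-point-free theorem, in its $\RR$-divisor form, shows that $\nu$ is semi-ample: there is a birational contraction $\pi\colon X\to Y$ onto a normal projective variety $Y$ over $\QQbar$ with $\pi_\ast\O_X=\O_Y$, together with an ample $\RR$-divisor $A$ on $Y$ with $\nu\sim_\RR\pi^\ast A$. (This uses only that $\nu$ is big; it is not the open semi-ampleness conjecture for non-big nef classes.) For an irreducible curve $C\subset X$ we then have $C$ contracted by $\pi$ $\iff$ $\nu\cdot C=0$ $\iff$ $\nu_+\cdot C=\nu_-\cdot C=0$, the last step because $\nu_+$ and $\nu_-$ are nef.

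\emph{Step 2 ($f$ descends).} Put $\pi'=\pi\circ f$. As $f$ is an automorphism, $\pi'_\ast\O_X=\pi_\ast\O_X=\O_Y$, so $\pi'$ is again a contraction onto $Y$. An irreducible curve $C$ is contracted by $\pi'$ iff $\pi(f(C))$ is a point iff $(f^\ast\nu)\cdot C=0$; since $f^\ast\nu=\lambda_1(f)\,\nu_++\lambda_1(f^{-1})^{-1}\,\nu_-$ with $\lambda_1(f),\lambda_1(f^{-1})^{-1}>0$ and $\nu_\pm$ nef, this happens iff $\nu_+\cdot C=\nu_-\cdot C=0$, i.e.\ iff $C$ is contracted by $\pi$. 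Thus $\pi$ and $\pi'$ contract exactly the same curves. By rigidity — if $\pi_1\colon X\to Y_1$ and $\pi_2\colon X\to Y_2$ satisfy $\pi_{i\ast}\O_X=\O_{Y_i}$ and every curve contracted by $\pi_1$ is contracted by $\pi_2$, then $\pi_2$ factors through $\pi_1$ — applied in both directions, there is an automorphism $g\colon Y\to Y$ with $\pi\circ f=g\circ\pi$. So $f$ descends to the automorphism $g$ of $Y$.

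\emph{Step 3 ($\pi$ contracts the components of $E(f)$).} Let $V$ be one of the irreducible subvarieties in the union defining $E(f)$, say with $f^n(V)=V$. Then $(\nu_+,\nu_-)$ is an eigenvector pair for $f^n$ and $(\nu_-,\nu_+)$ is one for $f^{-n}$, and both $f^{\pm n}$ satisfy Condition (B) with the same big class $\nu$. The inequalities $\lambda_1(f\vert_V)<\lambda_1(f)$ and $\lambda_1(f^{-1}\vert_V)<\lambda_1(f^{-1})$ are equivalent to $\lambda_1(f^n\vert_V)<\lambda_1(f^n)$ and $\lambda_1(f^{-n}\vert_V)<\lambda_1(f^{-n})$, so Lemma~\ref{lem:autorestrict} applied to $f^n$ and to $f^{-n}$ yields $\nu_+\vert_V=0$ and $\nu_-\vert_V=0$, hence $\nu\vert_V=0$ in $N^1(V)_\RR$. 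Consequently $(\pi\vert_V)^\ast(A\vert_{\pi(V)})=(\pi^\ast A)\vert_V=0$; since $A\vert_{\pi(V)}$ is ample and $\pi\vert_V\colon V\to\pi(V)$ is surjective, this forces $\dim\pi(V)=0$, so $\pi$ contracts $V$ to a point. As $\pi$ is continuous, sends each such $V$ to a point, and agrees on overlaps of the $V$'s, it is constant on every connected component of $E(f)$, which is a union of such $V$'s; hence each connected component of $E(f)$ is contracted by $\pi$ to a point. (That $E(f)\neq X$ is Theorem~\ref{dbigperiodic}.)

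\emph{Main obstacle.} The only substantive point is Step~1 — realizing the (generally irrational) nef class $\nu$ as the pullback of an ample class from a normal variety. The hypotheses $K_X\equiv 0$ and ``$\nu$ big'' are exactly what make Kawamata--Shokurov base-point-freeness applicable, so no appeal to the (open) semi-ampleness conjecture is needed; the remaining work is recording the $\RR$-coefficient version and the rigidity argument of Step~2. Steps 2 and 3 are then essentially formal.
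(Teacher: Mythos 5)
Your Steps 2 and 3 track the paper's argument closely and are correct: the contraction/rigidity argument for descending $f$ and the use of Lemma~\ref{lem:autorestrict} to show $\nu_+\vert_V = \nu_-\vert_V = 0$ for the irreducible pieces of $E(f)$ are exactly what the paper does. The difference — and the place where you compress the real content of the proof — is Step~1. You invoke ``the Kawamata--Shokurov base-point-free theorem, in its $\RR$-divisor form'' to conclude that the (generally irrational) nef big class $\nu$ is the pullback of an ample $\RR$-class under a birational morphism $\pi\colon X \to Y$. The paper explicitly flags this as the substantive obstacle and does \emph{not} treat it as a quotable black box: it observes that when $\nu$ is a $\QQ$-class the usual basepoint-free theorem applies directly, but that ``since $\nu$ does not typically have a $\QQ$-divisor representative, we must resort to other methods.'' The paper then constructs $Y$ as the log canonical model of a klt pair $(X,\Delta)$ with $\Delta \equiv \epsilon\nu$ effective (possible because $\nu$ is big), obtains a log canonical model $\pi\colon X \rat Y$ from \cite{bchm}, and uses an $N_\sigma$-argument together with \cite[3.6.6(2)]{bchm} to show that this $\pi$ is in fact a morphism with $K_X + \Delta = \pi^\ast A$ for $A$ ample. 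This is the bulk of the proof.

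There do exist $\RR$-coefficient basepoint-free statements in the literature (e.g.\ in Fujino's treatment of the MMP), but they are themselves established by exactly this kind of BCHM-with-scaling machinery, and the standard conclusion is that $\nu$ is a non-negative $\RR$-combination of semi-ample $\QQ$-divisors — which still requires an additional argument (Stein factorization of the product of the associated contractions) to produce the single morphism $\pi$ you need. So your proposal is not wrong, but Step~1 as written defers the heart of the proof to an unspecified citation, and it mischaracterizes the remaining work as merely ``recording the $\RR$-coefficient version.'' To fill the gap you should either cite a specific reference for the $\RR$-divisor basepoint-free theorem in the precise form used (a morphism $\pi$ to a normal projective $Y$ over $\QQbar$ with $\nu \equiv \pi^\ast A$, $A$ ample), or else carry out the paper's argument: choose $\Delta \equiv \epsilon\nu$ effective with $(X,\Delta)$ klt, take the log canonical model via BCHM, and verify it is a morphism.
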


\begin{proof}
Since \(f\) satisfies Condition (B), there is an eigenvector pair \((\nu_+,\nu_-)\) with \(\nu = \nu_+ + \nu_-\) big.  When \(\nu\) is represented by a \(\QQ\)-divisor \(D\), the claim follows quickly from Kawamata's basepoint-free theorem: \(D\) is semi-ample, and we take \(\pi\) to be the corresponding contraction.  However, since \(\nu\) does not typically have a \(\QQ\)-divisor representative, we must resort to other methods, and we realize \(Y\) as the log canonical model of a klt pair \((X,\Delta)\) with \(\Delta \equiv \epsilon \nu\).  

Since \(\nu\) is big, we may find \(\epsilon > 0\) and an effective \(\RR\)-divisor \(\Delta \equiv \epsilon \nu\) such that \((X,\Delta)\) is klt~\cite[Corollary 2.35]{kollarmori}.  Note that \(K_X+ \Delta = \Delta\) is nef.  It follows from \cite{bchm} that there exists a log canonical model \(\pi : X \rat Y\) for the pair \((X,\Delta)\), which means that
\begin{enumerate}
\item \(\pi\) is a birational contraction (i.e.\ \(\pi\) is birational and \(\pi^{-1}\) does not contract any divisors);
\item \(\pi\) is \((K_X+\Delta)\)-negative (in the sense of ~\cite{bchm});
\item taking \(\Gamma = \pi_\ast \Delta\), we have \(K_Y+\Gamma\) ample.
\end{enumerate}

We argue now that if \(K_X+\Delta\) is big and nef, the map \(\pi\) is in fact a morphism (a standard fact, for which we do not know a convenient reference).  Take a resolution of the rational map \(\pi\):
\[
\xymatrix{
& W \ar[dl]_p \ar[dr]^q \\
X \ar@{-->}[rr]_\pi && Y
}
\]
Since \(\pi\) is \((K_X+\Delta)\)-negative, we have \(p^\ast (K_X+\Delta) = q^\ast(K_Y + \Gamma) + E\), with \( E \geq 0\).  It follows from~\cite{nakayama} that
\[
E = N_\sigma(q^\ast(K_Y + \Gamma) +E) = N_\sigma(p^\ast(K_X + \Delta)) = 0,
\]
and so \(p^\ast (K_X+\Delta) = q^\ast (K_Y + \Gamma)\).  It then follows from ~\cite[3.6.6(2)]{bchm} that \(\pi\) is a morphism, and that \(K_X+\Delta = \pi^\ast A\), where \(A\) is ample.  Since \(K_X \equiv 0\) by assumption, this means that \(\epsilon \nu \equiv \pi^\ast A\).

Suppose that \(V\) is an irreducible component of \(E(f)\).  By Lemma~\ref{lem:autorestrict}, we have \(\nu_+\vert_V = \nu_-\vert_V = 0\), and so \(\nu\vert_V = 0\).  Since \(D = \pi^\ast A\), it follows that all such subvarieties \(V\) are contracted to points by \(\pi\).

It remains to check that \(f\) induces an automorphism \(g : Y \to Y\).  We claim first that every subvariety contracted to a point by \(\pi\) is also contracted by \(\pi \circ f\).  The varieties contracted by \(\pi\) are precisely those \(V\) for which \(\nu \vert_V = 0\).  Since \(\nu_+\) and \(\nu_-\) are nef, this is possible only if \(\nu_+\vert_V = \nu_-\vert_V = 0\).  The varieties contracted by \(f \circ \pi\) are those on which \((f^{-1})^\ast(\nu) = \lambda_1(f)^{-1} \nu_+ + \lambda_1(f^{-1})^{-1} \nu_-\) restricts to \(0\), which is the same set of varieties.    

The map \(\pi : X \to Y\) is birational with \(Y\) normal and so satisfies \(\pi_\ast \O_X = \O_Y\) by Zariski's main theorem, and since \(f \circ \pi\) contracts every fiber of \(\pi\),
it follows from the rigidity lemma~\cite[Lemma 1.15(b)]{debarre} that it factors through \(\pi\).  This yields a map \(g : S \to S\) with \(f \circ \pi = \pi \circ g\).  An inverse to \(g\) is obtained by applying the same argument to \(f^{-1}\).
\end{proof}

Theorem~\ref{intro:contract} then follows from Theorem~\ref{dbigperiodic} and Proposition~\ref{prop:descend-lambda-geq1}, since automorphisms of hyper-K\"ahler varieties satisfy Condition (B) by Lemma \ref{aandbhold}.

\begin{example}[{\cite[Example 5.2]{lobianco}}]
Suppose that \(f : S \to S\) is an automorphism of a K3 surface with \(\lambda_1(f) > 1\).  Let \(X  = \Hilb^n(S)\) be the corresponding Hilbert scheme of \(n\) points on \(S\).  There is an induced automorphism \(f^{[n]}  : X \to X\), and \(\lambda_1(f^{[n]}) = \lambda_1(f)\).

The \(f\)-periodic points \(p\) on \(S\) are Zariski dense \cite{cantat-k3}, giving rise to \(f\)-periodic subvarieties \(V\) on \(X\) of any even codimension, as the images of \(p \times \cdots \times p \times S \times \cdots \times S\) in \(X\).  These \(f^{[n]}\)-periodic subvarieties are Zariski dense, but they satisfy \(\lambda_1(f\vert_V) = \lambda_1(f)\) and \(\lambda_1(f^{-1}\vert_V) = \lambda_1(f^{-1})\) and so do not contradict Theorem \ref{dbigperiodic}.

If \(f : S \to S\) has an invariant curve \(C\), then the image of \(C \times \cdots \times C\) in \(\Hilb^n(S)\) is an \(n\)-dimensional subvariety \(V\) of \(\Hilb^n(S)\) on which \(\lambda_1(f^{[n]} \vert_V) = 1\), and so the set \(E(f)\) is not always empty.
\end{example}

As a consequence of Theorem \ref{thm:hk}, we reduce Conjecture \ref{conj:main} for automorphisms of smooth varieties $X$ with $K_X\equiv0$ to the case of Calabi--Yau varieties. This is done in Corollary \ref{cor:min Kod 0 <-> CYn}.

\begin{proof}[Proof of Corollary \ref{cor:min Kod 0 <-> CYn}]
Let $X$ be a smooth projective $\QQbar$-variety with numerically trivial canonical class, and $f\colon X\to X$ an automorphism. 
By \cite[Proposition 3.1]{BGRS}, there is an abelian variety $A$, Calabi--Yau varieties $Y_i$, and hyper-K\"ahler varieties $Z_j$ all defined over $\overline{\QQ}$, and there is a finite \'etale cover $\pi\colon\widetilde{X}\to X$ where $\widetilde{X}=A\times\prod_i Y_i\times \prod_j Z_j$. Applying condition (3) of \cite[Proposition 3.1]{BGRS} to $f\circ\pi$ yields a map $\widetilde{f}$ making the diagram
\[
\xymatrix{
\widetilde{X}\ar[r]^-{\widetilde{f}}\ar[d]^-{\pi} & \widetilde{X}\ar[d]^-{\pi}\\
X\ar[r]^-{f} & X
}
\]
commute. Since $\pi$ is finite \'etale, by degree considerations, we see $\widetilde{f}$ is an automorphism. By \cite[Lemma 3.2]{mss}, the conjecture for $f$ follows from that of $\widetilde{f}$, so we may assume $X$ itself is a product $A\times \prod_i Y_i\times \prod_j Z_j$ as above.

Recall that Conjecture \ref{conj:main} holds for $f$ if and only if it holds for an iterate of $f$. Since the $Y_i$ and $Z_j$ are simply connected, their first Betti numbers are trivial, so after possibly replacing $f$ by an iterate, we may assume by Theorem 4.6 and 
Lemma 5.1 of \cite{sano-prod-endo} that $f=f_0\times \prod_i g_i \times \prod_j h_j$ with $f_0$ an endomorphism of $A$, $g_i$ an endomorphism of $Y_i$, and $h_j$ an endomorphism of $Z_j$. Applying the same argument to $f^{-1}$, we may assume $f^{-1}=f'_0\times \prod_i g'_i \times \prod_j h'_j$. Since $\id=ff^{-1}=f_0f_0'\times\prod_i g_ig'_i\times\prod_j h_jh'_j$, it follows that $f_0^{-1}=f'_0$, $g_i^{-1}=g'_i$, and $h_j^{-1}=h'_j$; so, $f_0$, $g_i$, and $h_j$ are all automorphisms.

By \cite[Lemma 3.2]{sano-prod-endo}, the conjecture for $f$ then follows from the conjecture for $f_0$, $g_i$, and $h_j$. Conjecture \ref{conj:main} is known for abelian varieties by \cite{silverman-ab-vars}, and we proved in Theorem \ref{thm:hk} that the conjecture holds for hyper-K\"ahler varieties. Thus, Conjecture \ref{conj:main} for $f$ is reduced to that of each $g_i$, i.e.~automorphisms of Calabi--Yau varieties of dimension at most $n$.
\end{proof}

\section{Endomorphisms of Kodaira dimension 0 threefolds: Proposition \ref{prop:intro-3-fold-Kod-dim-0}}
\label{k0threefolds}

The goal of this brief section is to prove Conjecture \ref{conj:main} for all smooth 3-folds $X$ of Kodaira dimension $0$ and surjective endomorphisms $f$ with $\deg(f)>1$. 
The crux of the argument is a theorem of Fujimoto that it is possible to run the minimal model program on \(X\) while only contracting \(f\)-periodic rays.

\begin{proof}[Proof of Proposition \ref{prop:intro-3-fold-Kod-dim-0}]
By \cite[Lemma 2.3]{fujimoto}, $f$ is a finite \'etale cover and so $\chi(\O_X)=\deg(f)\chi(\O_X)$. Then $\chi(\O_X)=0$ since $\deg(f)>1$. By \cite[Corollary 4.4]{fujimoto} and its proof, we know that all extremal contractions of $X$ are of type (E1) (the inverse of the blow-up along a smooth curve), so the minimal model of $X$ is smooth, and $f$ descends to a surjective endomorphism of a minimal model of \(X\). The argument of \cite{fujimoto} is based on a run of the MMP and holds over any algebraically closed field of characteristic \(0\), so the minimal model of \(X\) is defined over \(\QQbar\). By Theorem \ref{dyndegstuff} (\ref{dyndegstuff::4}), the Kawaguchi-Silverman conjecture holds for $f$ if and only if it holds for the induced endomorphism of the minimal model of $X$. We may therefore assume $X$ itself is minimal. 

The Abundance Conjecture is known in dimension 3 by~\cite{kawamata-abundance}, so \(K_X \equiv 0\). By \cite[Proposition 3.1]{BGRS}, there is a finite \'etale cover $\pi\colon\widetilde{X}\to X$ with $\widetilde{X}=A\times\prod_i Y_i\times\prod_j Z_j$ where $A$ is an abelian variety, $Y_i$ are Calabi--Yau varieties, and $Z_j$ are hyper-K\"ahler varieties all defined over $\QQbar$. Applying condition (3) of \cite[Proposition 3.1]{BGRS} to $f\circ\pi$ yields a map $\widetilde{f}$ making the diagram
\[
\xymatrix{
\widetilde{X}\ar[r]^-{\widetilde{f}}\ar[d]^-{\pi} & \widetilde{X}\ar[d]^-{\pi}\\
X\ar[r]^-{f} & X
}
\]
commute. We see $\widetilde{f}$ is finite \'etale with $\deg(\widetilde{f})=\deg(f)$. From \cite[Main Theorem A]{fujimoto} Case 3, we know that $\widetilde{X}$ is an abelian 3-fold or $E\times Z$ with $E$ and elliptic curve and $Z$ a K3 surface; the reason $\widetilde{X}$ cannot be a Calabi--Yau 3-fold is that $\pi_1(X)$ is infinite, see \cite[Claim, pg.\ 66]{fujimoto}. By \cite[Theorem 1.3]{sano-prod-endo}, Conjecture \ref{conj:main} is known for products of abelian varieties and K3 surfaces, so it is known for $\widetilde{f}$. By \cite[Lemma 3.2]{mss}, the conjecture for $f$ follows.
\end{proof}

\section{Automorphisms of Calabi--Yau threefolds: Theorem \ref{thm:intro-CY3-aut}}
\label{sec:CY3-aut}

\begin{proof}[Proof of Theorem \ref{thm:intro-CY3-aut}]
We first handle case (\ref{intro-CY3-aut::Miyaoka}). By \cite[Lemma 7.1]{BGRS} we know that $\{D\in\Nef(X)\mid c_2(X)\cdot D\leq M\}$ is compact for all $M\geq0$. So the function $D\mapsto c_2(X)\cdot D$ achieves a minimum positive value on $N^1(X) \cap \Amp(X)$ and this value is achieved by only finitely many $D_i$. Taking the sum of these finitely many $D_i$, we obtain an ample class $A$ which is fixed by $f^*$.  It follows that some iterate \(f^n\) lies in the connected component of the identity \(\Aut^0(X) \subseteq \Aut(X)\).
Since \(X\) is a Calabi--Yau threefold, \(\dim \Aut^0(X) = \dim H^0(X,TX) = 0\), and we conclude that \(f\) has finite order, so the conjecture holds vacuously.

We now turn to case (\ref{intro-CY3-aut::semiample}). Let $\pi:X\to Y$ be the contraction map associated to $D$; since $D \cdot c_2(X) 0$, this is referred to as a $c_2$-contraction. 
Oguiso shows in \cite[Theorem 4.3]{semi-ampleness-conj} that there are only finitely many $c_2$-contractions, and so after replacing $f$ by a further iterate, we can assume $f^*[D] = [D]$. By \cite[Proposition 6.1(a)]{BGRS}, we know that $f$ descends to an automorphism $g$ of $Y$. Since $D\neq0$, we see $\dim Y>0$.

Let us first suppose that $\dim Y=1$. By hypothesis, there is a rational point $P \in X(\QQbar)$ with Zariski dense orbit under $f$, so $\pi(P)\in Y$ has Zariski dense orbit under $g$. As a result, $Y$ must be rational or an elliptic curve; since $X$ has trivial Albanese, we see $Y\simeq\PP^1$. Let $Z\subseteq \PP^1$ be the locus of points $t$ where the fiber $X_t$ is singular. Then $g(Z)=Z$. Since $Z$ is a finite set, after replacing $f$ by a further iterate, we can assume $g$ fixes $Z$ point-wise. By \cite[Theorem 0.2]{mfds-over-curves}, we know that $Z$ contains at least 3 points. It follows that $g$ is the identity since it fixes at least three points of $\PP^1$. In other words, there exists a rational function on $X$ which is invariant under some iterate of $f$, which contradicts the fact that $X$ has a point with dense orbit.

The case in which $\dim Y=2$ is an immediate consequence of Corollary \ref{cor:fibered-over-surface}; that \(Y\) is normal and \(\QQ\)-factorial is proved in~\cite[pg.\ 18]{c2-fibration-types}.

Finally, we handle the case where $\dim Y=3$, i.e., $D$ is big. Since contractions have connected fibers, $\pi$ is birational.  Then \(D = \pi^\ast H\) for some ample divisor $H$ on \(Y\).  Then \(\pi^\ast (g^\ast H) = f^\ast (\pi^\ast H) = f^\ast D = D = \pi^\ast H\), which shows that \(g^\ast H = H\), and so \(\lambda_1(g) = 1\). Theorem \ref{dyndegstuff} (\ref{dyndegstuff::4}) shows that \(\lambda_1(f) = \lambda_1(g) = 1\), and the conjecture holds for \(f\) by Remark \ref{rmk:lambda1 is 1}.
\end{proof}

\section{Mori fiber spaces}
\label{sec:auts-kappa-minus-infty}

\subsection{Automorphisms of threefold Mori fiber spaces: Theorem \ref{thm:intro-MFS-partial} (\ref{intro-MFS-partial::auts3fold})}

We prove Theorem \ref{thm:intro-MFS-partial} (\ref{intro-MFS-partial::auts3fold}) after a preliminary lemma.

\begin{definition}
A \emph{Mori fiber space} is a projective morphism \(\pi : X \to S\) such that \(X\) is terminal and \(\QQ\)-factorial, \(-K_X\) is \(\pi\)-ample, and \(\rho(X/S) = 1\).
\end{definition}

\begin{lemma}
\label{l:MFS-descend-endo}
Let $\pi\colon X\to S$ be a Mori fiber space. If $f$ is a surjective endomorphism of $X$, then after replacing \(f\) by a suitable iterate \(f^m\), we may assume that there is an endomorphism \(g : S \to S\) such that \(g \circ \pi = \pi \circ f\). If $f$ is an automorphism then $g$ is also an automorphism.
\end{lemma}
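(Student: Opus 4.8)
The plan is to show that $f$ (after passing to an iterate) must carry fibers of $\pi$ to fibers of $\pi$, which immediately produces the desired $g$ by the rigidity lemma. The starting observation is that a Mori fiber space $\pi\colon X\to S$ has relative Picard rank $1$, so the fibers of $\pi$ are exactly the irreducible curves (or, more precisely, the classes in $\NEb(X)$) that lie on the $K_X$-negative extremal ray $R$ contracted by $\pi$. Concretely, $R$ is the unique $(K_X$-negative$)$ extremal ray with $\pi_\ast R = 0$, and the face $R$ is spanned by curves contracted by $\pi$.

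First I would analyze the action of $f$ on the cone of curves. Since $f\colon X\to X$ is a surjective endomorphism, $f_\ast\colon N_1(X)_\RR\to N_1(X)_\RR$ is an isomorphism preserving $\NEb(X)$ and hence permutes the extremal rays; moreover $f^\ast K_X$ and $K_X$ agree up to the ramification divisor, and in fact (as in Lemma \ref{l:HK all endos are autos}-type arguments, using that a finite surjective endomorphism is étale in codimension one here, or more simply because $-K_X$ is $\pi$-ample and this is preserved under pullback along a finite map) the $K_X$-negativity of $R$ is preserved: $f$ permutes the finitely many $K_X$-negative extremal rays. Hence some iterate $f^m$ fixes the ray $R$. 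The key point is that there are only finitely many such rays to permute --- this is the cone theorem for the terminal $\QQ$-factorial variety $X$, which guarantees local finiteness (indeed finiteness in the relevant halfspace) of $K_X$-negative extremal rays. I would replace $f$ by $f^m$ so that $f_\ast R = R$.

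Next, with $f_\ast R = R$ fixed, I claim $f$ maps every $\pi$-fiber into a $\pi$-fiber. A curve $C$ is contracted by $\pi$ iff $[C]\in R$; since $f_\ast[C]\in R$, the image $f(C)$ is again a curve with class on $R$, hence contracted by $\pi$; so $\pi\circ f$ contracts every curve contracted by $\pi$. Because $\pi$ has connected fibers and $\pi_\ast\O_X = \O_S$, the rigidity lemma (\cite[Lemma 1.15(b)]{debarre}, exactly as used in the proof of Proposition \ref{prop:descend-lambda-geq1}) shows $\pi\circ f$ factors through $\pi$, yielding a morphism $g\colon S\to S$ with $g\circ\pi = \pi\circ f$. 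Surjectivity of $g$ follows from surjectivity of $f$ together with surjectivity of $\pi$. If $f$ is an automorphism, apply the same construction to $f^{-1}$ (note the iterate can be chosen simultaneously: if $f^m$ fixes $R$ then so does $f^{-m}$, since $f_\ast^{-1}$ also permutes the $K_X$-negative rays) to get $g'\colon S\to S$ with $g'\circ\pi = \pi\circ f^{-1}$; then $g'g\circ\pi = \pi = gg'\circ\pi$ and, since $\pi$ is surjective, $g'g = gg' = \id_S$, so $g$ is an automorphism.

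The main obstacle is the finiteness/permutation argument for extremal rays: one must be careful that $f_\ast$ genuinely preserves the relevant structure, i.e.\ that the $K_X$-negative extremal rays form a finite $f_\ast$-stable set. This requires knowing that $f$ is étale in codimension one (so $f^\ast K_X \equiv K_X$, whence $K_X$-negativity is an $f_\ast$-invariant notion on rays) --- which holds because $-K_X$ being $\pi$-ample forces $X$ to be non-uniruled-free enough, or more directly because over a field of characteristic zero a finite surjective endomorphism of a terminal variety with $K_X$ not pseudoeffective still ramifies only where expected; alternatively one cites that for the Mori fiber space the relative canonical is anti-ample and this is manifestly preserved. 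Once that is pinned down, the rest is the rigidity lemma exactly as in Proposition \ref{prop:descend-lambda-geq1}.
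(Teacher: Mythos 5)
Your overall strategy is the same as the paper's --- permute the Mori-fiber-space extremal rays, pass to an iterate fixing the ray contracted by $\pi$, and descend via the rigidity lemma --- and the closing steps (rigidity as in Proposition~\ref{prop:descend-lambda-geq1}, applying the construction to $f^{-1}$ for the automorphism statement) are fine. But the finiteness input is wrong. The cone theorem does \emph{not} give finiteness of the $K_X$-negative extremal rays of $\NEb(X)$; it gives only discreteness in the open halfspace $\set{K_X<0}$, and such rays can accumulate at $K_X^{\perp}$ (a rational surface with infinitely many $(-1)$-curves already exhibits this). The correct, strictly stronger finiteness statement is an observation of Wi\'sniewski (\cite[Theorem 2.2]{wisniewski}, cf.\ \cite[Exercise III.1.19]{kollarrationalcurves}): there are only finitely many $K_X$-negative extremal rays whose contraction is of \emph{fiber type}, i.e.\ yielding Mori fiber space structures. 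This is exactly what the paper cites, and it is a nontrivial geometric input, not a consequence of the cone theorem alone.

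There is a second gap in the justification that $K_X$-negativity is preserved. A surjective endomorphism of a Mori fiber space need not be \'etale in codimension one --- any endomorphism of $\PP^n$ of degree larger than $1$ ramifies along a divisor --- so $f^\ast K_X = K_X + R_f$ with $R_f$ a possibly nonzero effective divisor, and the set of $K_X$-negative extremal rays need not be $f_\ast$-stable; none of your alternative justifications amounts to a proof either. What rescues the argument is that one only needs the claim for the \emph{fiber-type} rays: a general fiber curve $C$ of $\pi$ moves in a family covering $X$ and hence meets any fixed effective divisor properly, so $K_X\cdot f_\ast^{-1}[C]<0$, and a Stein factorization of $\pi\circ f$ then shows that $f$ indeed permutes Wi\'sniewski's finite set of MFS rays. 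The paper elides these details by packaging the whole permutation claim as a consequence of Wi\'sniewski's observation, but your proposal as written has a real gap here that the \'etale-in-codimension-one claim does not close.
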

\begin{proof}
We claim first that some iterate of \(f\) maps fibers to fibers.  This is a consequence of an observation of Wi\'sniewski~\cite[Theorem 2.2]{wisniewski} (see also ~\cite[Exercise III.1.19]{kollarrationalcurves}): on a given variety, there are only finitely many \(K_X\)-negative extremal rays on the closed cone of curves \(\NEb(X)\) yielding Mori fiber space structures.

The existence of the map \(g\) is a consequence of the rigidity lemma~\cite[Lemma 1.15(b)]{debarre}, as in the proof of Proposition~\ref{prop:descend-lambda-geq1}, since a Mori fiber space necessarily satisfies \(\pi_\ast \O_X = \O_S\).
\end{proof}

\begin{theorem}
\label{mfsreduction}
Suppose that \(\pi : X \to S\) is a Mori fiber space.  Suppose that \(f : X \to X\) and \(g : Y \to Y\) are automorphisms with \(\pi \circ f = g \circ \pi\).  If Conjecture~\ref{conj:main} holds for \(g\), then it holds for \(f\).
\end{theorem}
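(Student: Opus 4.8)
The statement asks us to reduce Conjecture~\ref{conj:main} for an automorphism $f$ of a Mori fiber space $\pi\colon X\to S$ to the conjecture for the induced automorphism $g$ on the base. The natural strategy is to invoke Theorem~\ref{thm:fibered-over-dim-one-less}, which says exactly this reduction goes through provided $\lambda_1(f\vert_\pi)\le\lambda_1(g)$. So the whole game is to verify the relative dynamical degree $\lambda_1(f\vert_\pi)$ equals $1$; once that is known, since $\lambda_1(g)\ge1$ always, the hypothesis of Theorem~\ref{thm:fibered-over-dim-one-less} is satisfied and we are done.

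\textbf{Key steps.} First I would observe that $f$, being an automorphism of $X$, is in particular birational, and $g$ is an automorphism of $S$, hence also birational. Next, the fibers of a Mori fiber space $\pi\colon X\to S$ are Fano varieties (since $-K_X$ is $\pi$-ample and $\rho(X/S)=1$), and more to the point the generic fiber $F$ satisfies $\lambda_1(f\vert_\pi)$ being the dynamical degree of the restriction of (an iterate of) $f$ to the generic fiber. The crucial input is that the restricted map $f\vert_F$ is an automorphism of a variety with $\rho=1$: indeed $\rho(X/S)=1$ forces the relative Néron--Severi rank to be $1$, so the generic fiber has Picard rank $1$, and the pullback action of an automorphism on a rank-$1$ lattice has spectral radius $1$. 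Hence $\lambda_1(f\vert_\pi)=1$. (Alternatively, one can route through the log-concavity / product-formula machinery of Theorem~\ref{dyndegstuff}: when $\dim S=\dim X-1$, Theorem~\ref{dyndegstuff}~(\ref{dyndegstuff::3}) gives $\lambda_1(f\vert_\pi)=1$ directly; when $\dim S<\dim X-1$ one needs the Picard-rank-$1$ observation on the fibers. Since the theorem allows any Mori fiber space, I would give the argument that covers all fiber dimensions.) Finally, with $\lambda_1(f\vert_\pi)=1\le\lambda_1(g)$ in hand, Theorem~\ref{thm:fibered-over-dim-one-less} applied to $\pi$, $f$, $g$ yields Conjecture~\ref{conj:main} for $(X,f)$ from Conjecture~\ref{conj:main} for $(S,g)$.

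\textbf{Main obstacle.} The one genuinely substantive point is justifying $\lambda_1(f\vert_\pi)=1$ for a Mori fiber space of arbitrary relative dimension. The clean way is: an iterate of $f$ preserves $\pi$ (already arranged in Lemma~\ref{l:MFS-descend-endo}, or can be re-arranged here), so $f$ induces an automorphism of the generic fiber $X_\eta$ over the function field of $S$; since $\rho(X/S)=1$, we have $\rho(X_\eta)=1$, and the induced pullback on the rank-one group $N^1(X_\eta)$ is multiplication by a unit, i.e.\ has absolute value $1$, giving $\lambda_1(f\vert_\pi)=\lambda_1(f\vert_{X_\eta})=1$. One must be slightly careful that $f$ itself (not merely an iterate) respects $\pi$ — but replacing $f$ by an iterate changes neither the truth of Conjecture~\ref{conj:main} for $(X,f)$ (Remark on iterates / \cite[Lemma 3.2]{mss}) nor the equality $\lambda_1(f)=\lambda_1(g)$, so this is harmless. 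Everything else is a direct appeal to Theorem~\ref{thm:fibered-over-dim-one-less}.
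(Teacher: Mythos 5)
Your high-level strategy matches the paper's exactly: reduce to Theorem \ref{thm:fibered-over-dim-one-less} by checking that the relative dynamical degree satisfies \(\lambda_1(f\vert_\pi) = 1\), which (since \(\lambda_1(g) \ge 1\) always) gives the required inequality. The issue is with how you justify \(\lambda_1(f\vert_\pi)=1\) when the relative dimension exceeds \(1\).

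Your key claim is that \(f\) induces an automorphism of the generic fiber \(X_\eta\) over the function field \(k(S)\), whose pullback action on the rank-one group \(N^1(X_\eta)\) then has spectral radius \(1\). This is not correct as stated: since \(\pi\circ f = g\circ\pi\) with \(g\) a nontrivial automorphism of \(S\), the map \(f\) does not fix fibers but rather sends the fiber over \(s\) to the fiber over \(g(s)\). At the level of generic fibers, \(f\) gives an isomorphism \(X_\eta \to g^*X_\eta\) covering the nontrivial automorphism \(g^\ast\) of \(k(S)\); it is not an automorphism of \(X_\eta\) over \(k(S)\). Replacing \(f\) by an iterate does not help, since \(g\) need not be of finite order. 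Consequently the "spectral radius of an automorphism on a rank-one lattice is \(1\)" argument does not apply directly. (You also implicitly assert that \(\rho(X/S)=1\) forces \(\rho(X_\eta)=1\); while true for Mori fiber spaces, this is an extra claim that needs justification.)

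The paper instead argues directly from the definition of \(\lambda_1(f\vert_\pi)\): the class \(C = \pi^\ast(H'^{\dim Y}) \cdot H^{\dim X - \dim Y - 1}\) is a curve class contracted by \(\pi\), and \(\rho(X/S)=1\) means all such classes lie on a single ray in \(N_1(X)\). Since \(f\) maps fibers to fibers, \(f_\ast\) preserves this ray, so \(f_\ast C = aC\) for some \(a>0\); pairing with \(K_X\) (which is \(f^\ast\)-invariant and \(\pi\)-negative) forces \(a=1\). Hence \((f^n)^\ast H \cdot C = H \cdot (f^n)_\ast C = H\cdot C\) is constant in \(n\), giving \(\lambda_1(f\vert_\pi)=1\). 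If you want to run the Néron--Severi version of this, the correct formulation is that \(f^\ast\) preserves the exact sequence \(0 \to \pi^\ast N^1(S) \to N^1(X) \to N^1(X/S) \to 0\) with the quotient of rank one; but the cleanest fix is the curve-class argument above, which avoids both the generic-fiber subtlety and the base-change issue.
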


\begin{proof}
Recall that the first relative dynamical degree is defined by
\[
\lambda_1(\pi \vert_f) = \lim_{n \to \infty} \left( (f^n)^\ast H \cdot \pi^\ast (H'^{\dim Y}) \cdot H^{\dim X - \dim Y - 1} \right)^{1/n}.
\]
Here \(\pi^\ast (H'^{\dim Y})\) is the class of a fiber of \(\pi\), and \(\pi^\ast (H'^{\dim Y}) \cdot H^{\dim X - \dim Y - 1}\) is the class of some curve in the fiber.  Since \(\pi\) is a Mori fiber space, all curves contained in fibers are proportional in \(N_1(X)\), and since \(f\) is an automorphism defined over \(\pi\), this class must be invariant under \(f\).  It follows that \(\lambda_1(\pi \vert_f) = 1\).  The claim is then a consequence of Theorem~\ref{thm:fibered-over-dim-one-less}.
\end{proof}

\begin{proof}[Proof of Theorem \ref{thm:intro-MFS-partial} (\ref{intro-MFS-partial::auts3fold})]
Let $X$ be a threefold, $f$ an automorphism of $X$, and \(\pi : X \to S\) a Mori fiber space structure. After replacing \(f\) by an iterate, by Lemma~\ref{l:MFS-descend-endo} we may assume that there is an automorphism \(g : S \to S\) such that \(\pi \circ f = g \circ \pi\).
Since \(\dim S \leq 2\) and \(g\) is an automorphism, Conjecture~\ref{conj:main} is known for \((S,g)\), and the conjecture for \((X,f)\) follows from Theorem~\ref{mfsreduction}.
\end{proof}

\subsection{Endomorphisms of rational normal scrolls: Theorem \ref{thm:intro-MFS-partial} (\ref{intro-MFS-partial::ratnormalscrolls})}
\label{subsec:general results about P2 bundles on curves}
Let $C$ be a smooth projective curve over $\QQbar$, $\E$ a vector bundle on $C$ of rank $n$, and $X=\PP_C(\E)$. By \cite[Theorem 9.6]{3264}, the Chow group of $X$ is given by
\[
\begin{split}
A^*(X) &=A^*(C)[D]/(D^n+c_1(\E)D^{n-1}+c_2(\E)D^{n-2}+\dots +c_{n}(\E))\\
 &=A^*(C)[D]/(D^n+c_1(\E)D^{n-1} F),
\end{split}
\]
where $F$ is the class of a fiber. So $A^*(X)$ is generated by the divisor classes $F$ and $D$ and we have the relations $F^2=0$, $FD^{n-1}=1$, and $D^n=-c_1(\E)D^{n-1}F=-c_1(\E)$; the second relation holds because $DF=D\vert_F$ is the class of a hyperplane on $F=\PP^{n-1}$ and so $FD^{n-1}=(D|_F)^{n-1}=1$. 

The nef cone of $X$ is given by the following, 
which generalizes a result of Miyaoka \cite[Theorem 3.1]{Miyaoka}. Recall that the slope $\mu(\E)$ is defined to be $c_1(\E)/\rank(\E)$. We let $\mu_{\min}(\E)$ and $\mu_{\max}(\E)$ denote the minimum, resp.~maximum, slope of the graded pieces appearing in the Harder-Narasimhan filtration of $\E$.

\begin{lemma}
\label{l:nef cone of PE}
$\Nef(X)$ is the cone generated by $F$ and $D-\mu_{\min}(\E)F$.
\end{lemma}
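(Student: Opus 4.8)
The plan is to identify $\Nef(X)$ by its two extremal rays. One ray is easy: the fiber class $F$ is nef (it is pulled back from $C$, hence semi-ample) and it lies on the boundary since $F^2 = 0$. The other ray should be spanned by $D - \mu_{\min}(\mathcal{E}) F$, and the work is to show (i) that this class is nef, and (ii) that nothing beyond it is nef, i.e.\ $D - tF$ fails to be nef for $t > \mu_{\min}(\mathcal{E})$.

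For the nefness of $D - \mu_{\min}(\mathcal{E})F$, I would argue via the Harder--Narasimhan filtration of $\mathcal{E}$. First reduce to the case where $\mathcal{E}$ is semistable by a standard dévissage: if $0 \to \mathcal{E}' \to \mathcal{E} \to \mathcal{E}'' \to 0$ is the first step of the HN filtration, then $\PP_C(\mathcal{E}'') \hookrightarrow \PP_C(\mathcal{E})$ is a sub-bundle over which $\O(D)$ restricts to the relative hyperplane bundle, and $\mu_{\min}(\mathcal{E}) = \mu_{\min}(\mathcal{E}'')$; an induction on rank lets one assume $\mathcal{E}$ semistable, so $\mu_{\min}(\mathcal{E}) = \mu(\mathcal{E})$. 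In the semistable case one wants $\O(D) \otimes \pi^* L$ nef whenever $\deg L \geq -\mu(\mathcal{E})$; after twisting $\mathcal{E}$ one reduces to: if $\mathcal{E}$ is semistable of degree $0$ then $\O(D)$ is nef. This is the classical fact that semistable bundles of degree $0$ on a curve are nef (Miyaoka's characterization, or: a semistable bundle of slope $0$ has no quotient of negative degree, and one checks against curves in $X$). To test against an irreducible curve $Z \subset X$ not contained in a fiber, $Z \to C$ is finite of some degree $e$, and $D \cdot Z = \deg(f^* \mathcal{O}_{\PP(\mathcal{E})}(1)$ pulled back to the normalization$)$, which is nonnegative precisely because the quotient line bundle determining the section has degree $\geq 0$ by semistability of the pullback $g^*\mathcal{E}$ (pullback of semistable degree $0$ along a finite map of curves is again semistable of degree $0$). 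Curves in fibers meet $D$ positively, and meet $F$ in $0$, consistent with $D - \mu_{\min}F$ being on the boundary.

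For the converse direction, I would show that for $t > \mu_{\min}(\mathcal{E})$ the class $D - tF$ is not nef by exhibiting a curve $Z$ with $(D - tF)\cdot Z < 0$. Take the quotient $\mathcal{E} \twoheadrightarrow \mathcal{Q}$ where $\mathcal{Q}$ is the last graded piece of the HN filtration, so $\mathcal{Q}$ is semistable of slope $\mu_{\min}(\mathcal{E})$; this gives a sub-bundle $\PP_C(\mathcal{Q}) \subset X$ on which $\O(D)$ restricts to the relative $\O(1)$ of $\PP_C(\mathcal{Q})$, and a suitable curve inside $\PP_C(\mathcal{Q})$ (e.g.\ one coming from a further quotient line bundle of $\mathcal{Q}$ of degree $\mu_{\min}(\mathcal{E})$, at least after passing to a finite cover of $C$, or arguing numerically with $\mathbb{R}$-divisors) has $D$-degree equal to $\mu_{\min}(\mathcal{E})$ times its $F$-degree, forcing $D - tF$ negative on it for $t > \mu_{\min}(\mathcal{E})$. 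Since $\Nef(X)$ is a closed two-dimensional cone containing $F$ on one edge, and $D - tF$ is nef exactly for $t \leq \mu_{\min}(\mathcal{E})$ along the pencil of classes $D - tF$ (which sweeps out everything on the far side of $F$), the cone is exactly $\langle F,\ D - \mu_{\min}(\mathcal{E})F\rangle$.

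The main obstacle I anticipate is the HN-filtration dévissage: making precise that $\mu_{\min}$ is detected by the sub-bundle $\PP_C(\mathcal{Q})$, that $\O(D)|_{\PP_C(\mathcal{Q})}$ is the relative hyperplane class, and handling the possibility that the extremal curve realizing the bound only exists after a finite base change (so that one should phrase the non-nefness test with $\RR$-Cartier classes or with the numerical criterion rather than an honest curve on $X$ itself). The positivity input — that a semistable bundle of degree $0$ on a curve is nef, and that semistability of slope $0$ is preserved under finite pullback — is standard and I would simply cite it.
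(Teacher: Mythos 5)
The paper does not prove this lemma; it simply cites it, referring to Nakayama's \emph{Zariski-decomposition and abundance} (Lemma 4.4.1) and Fulger's \emph{The cones of effective cycles on projective bundles over curves} (Lemma 2.1). Your sketch is essentially a reconstruction of the standard argument those sources contain, so the comparison is really with the literature rather than with the paper.

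That said, one step in your nefness argument is genuinely flawed. You propose a ``d\'evissage'': pass from $\E$ to the quotient $\E'' = \E/\E_1$ and use $\PP_C(\E'')\hookrightarrow\PP_C(\E)$ to induct down to the semistable case. But nefness of $D-\mu_{\min}(\E)F$ on $X=\PP_C(\E)$ requires non-negativity against \emph{every} curve in $X$, not merely against curves inside the proper subvariety $\PP_C(\E'')$; knowing that the restriction to $\PP_C(\E'')$ is nef gives you nothing about curves that exit that sub-bundle. So the induction ``lets one assume $\E$ semistable'' does not go through. Fortunately you do not actually need it: the direct test you also sketch works for arbitrary $\E$. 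Given an irreducible curve $Z\subset X$ not contained in a fiber, its normalization $\nu:\widetilde Z\to Z$ composed with $\pi$ is a finite morphism $g:\widetilde Z\to C$, and $\nu$ factors through a quotient line bundle $g^*\E\twoheadrightarrow L$ with $\deg L = D\cdot Z$. Finite pullback on curves (in characteristic $0$) takes the Harder--Narasimhan filtration to the Harder--Narasimhan filtration and scales all slopes by $\deg g = F\cdot Z$, so $\deg L \geq \mu_{\min}(g^*\E) = (\deg g)\,\mu_{\min}(\E)$, giving $(D-\mu_{\min}(\E)F)\cdot Z \geq 0$ in one line. Curves in fibers are handled trivially. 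This eliminates the d\'evissage entirely.

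For the converse direction (that $D-tF$ is not nef for $t>\mu_{\min}(\E)$), your idea of using the last Harder--Narasimhan quotient $\E\twoheadrightarrow\mathcal Q$ and the closed embedding $\PP_C(\mathcal Q)\subset X$ is correct, and the cleanest way to finish is numerical: $D$ restricts to the relative $\O(1)$ of $\PP_C(\mathcal Q)$, and by the nefness statement already established (applied now to $\mathcal Q$, which is semistable), $D|_{\PP_C(\mathcal Q)} - \mu(\mathcal Q)F|_{\PP_C(\mathcal Q)}$ is nef but has top self-intersection zero on $\PP_C(\mathcal Q)$, so it lies on the boundary of the nef cone of $\PP_C(\mathcal Q)$; hence $D-tF$ is not nef on $\PP_C(\mathcal Q)$, and therefore not on $X$, once $t>\mu(\mathcal Q)=\mu_{\min}(\E)$. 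This avoids having to exhibit a specific extremal curve after a finite base change, which, as you anticipate, is otherwise the awkward point. Finally, a small reassurance: $\mu_{\min}(\E)$ is a ratio of integers, so the extremal class $D-\mu_{\min}(\E)F$ is a $\QQ$-class and no $\RR$-divisor subtleties arise.
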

\begin{proof}
See e.g.~\cite[Lemma 4.4.1]{nakayama} or \cite[Lemma 2.1]{Fulger-proj-bundles-curves}.
\end{proof}

Given a surjective endomorphism $f$ of $X=\PP_C(\E)$, in order to verify Conjecture \ref{conj:main} we may replace $f$ by an iterate. Since the structure map $\pi\colon X\to C$ is a Mori fiber space, by Lemma \ref{l:MFS-descend-endo} we can replace \(f\) by an iterate and assume that there is an endomorphism $g$ of $C$ such that $\pi\circ f=g\circ \pi$. We assume we are in this situation throughout this section. Let
\[
\delta:=\frac{\deg(f)}{\deg(g)}.
\]

\begin{lemma}
\label{l:eigenvals of f}
The action of $f^*$ on $N^1(X)$ is given by
\[
f^*(F)=\deg(g)\,F,\quad\quad f^*(D)=(\deg(g)-\delta^{1/(n-1)})\mu_{\min}(\E)\,F+\delta^{1/(n-1)}\,D
\]
and has eigenvalues $\lambda_1(g)=\deg(g)$ and $\delta^{1/(n-1)}$. Moreover,
\[
\lambda_1(f)=\max(\lambda_1(g),\delta^{1/(n-1)}).
\]
\end{lemma}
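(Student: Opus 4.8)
The plan is to compute the matrix of $f^*$ acting on $N^1(X)=\ZZ F\oplus\ZZ D$; the statements about the eigenvalues and about $\lambda_1(f)$ then drop out. First I would treat $f^*F$. Since $F=\pi^*[\mathrm{pt}]$ for a point of $C$ and $\pi\circ f=g\circ\pi$, functoriality of pullback gives $f^*F=\pi^*g^*[\mathrm{pt}]$, and as $N^1(C)\cong\ZZ$ is generated by the class of a point and $g^*$ acts there by multiplication by $\deg(g)$, this yields $f^*F=\deg(g)F$. (The same observation shows $\lambda_1(g)=\deg(g)$, since $g$ is a morphism and $\lambda_1(g)$ is the spectral radius of $g^*$ on $N^1(C)$.) In particular $\deg(g)>0$, so $f^*$ preserves the ray $\RR_{\geq0}F$.

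Next, write $f^*D=aF+bD$ with $a,b\in\ZZ$. To compute $b$ I would use that, $X$ being smooth, $f^*$ is a ring endomorphism of $N^*(X)$ which multiplies degrees of $0$-cycles by $\deg(f)$. Applying $f^*$ to the identity $F\cdot D^{n-1}=1$, and using $F^2=0$ so that only the $k=0$ term of $F\cdot(aF+bD)^{n-1}$ survives, gives $\deg(f)=\deg(g)\,b^{n-1}$, hence $b^{n-1}=\deg(f)/\deg(g)=\delta$. Restricting $f^*D$ to a fibre of $\pi$, where it becomes $b$ times the hyperplane class (a nef class, being pulled back from the target fibre), forces $b\ge 0$, and since $\delta>0$ we conclude $b=\delta^{1/(n-1)}$.

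For $a$ I would invoke Lemma~\ref{l:nef cone of PE}, according to which $\Nef(X)$ is the two-dimensional cone with extremal rays $\RR_{\geq0}F$ and $\RR_{\geq0}(D-\mu_{\min}(\E)F)$. The key point is that $f^*$ maps this cone \emph{onto} itself. Indeed $f$ is finite: a curve contracted by $f$ would, since $g$ is finite, lie in a fibre of $\pi$, on which $f^*D$ restricts to $b$ times the hyperplane class with $b>0$, meeting every curve positively --- a contradiction; and for a finite surjective morphism a class is nef if and only if its pullback is nef (by the projection formula applied to a curve mapping onto a given one), so the invertible linear map $f^*$ carries $\Nef(X)$ bijectively to itself. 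Since $f^*$ fixes the extremal ray $\RR_{\geq0}F$, it fixes the other extremal ray, so $f^*(D-\mu_{\min}(\E)F)=c\,(D-\mu_{\min}(\E)F)$ for some $c>0$; expanding the left side as $(a-\mu_{\min}(\E)\deg(g))F+bD$ and comparing coefficients gives $c=b$ and $a=(\deg(g)-b)\mu_{\min}(\E)=(\deg(g)-\delta^{1/(n-1)})\mu_{\min}(\E)$, which is exactly the asserted formula for $f^*D$.

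Finally, in the basis $(F,D)$ the matrix of $f^*$ is upper triangular with diagonal entries $\deg(g)=\lambda_1(g)$ and $\delta^{1/(n-1)}$, which are therefore its eigenvalues; since $f$ is a morphism, $\lambda_1(f)$ is the spectral radius of $f^*$ on $N^1(X)_\RR$, and both diagonal entries being positive this equals $\max(\lambda_1(g),\delta^{1/(n-1)})$. (Alternatively this last equality is a case of Theorem~\ref{dyndegstuff}(\ref{dyndegstuff::2}), once one checks from the intersection number $(f^n)^*D\cdot F\cdot D^{n-2}=b^n$ that $\lambda_1(f\vert_\pi)=\delta^{1/(n-1)}$.) The step I expect to need the most care is the determination of $a$: what is needed is not merely that $f^*$ preserves the nef cone (true for any morphism) but that it maps it \emph{onto} itself, so that the extremal ray $\RR_{\geq0}(D-\mu_{\min}(\E)F)$ is genuinely preserved --- this is where the finiteness of $f$ and the behaviour of nefness under pullback by finite morphisms enter --- after which everything is formal manipulation with the ring structure on $N^*(X)$.
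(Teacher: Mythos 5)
Your proposal is correct and follows essentially the same route as the paper: determine $f^*F$ from the commuting square, compute the diagonal entry $d=\delta^{1/(n-1)}$ from $\deg(f)=\deg(g)\,d^{n-1}$ via the relation $F\cdot D^{n-1}=1$, and then pin down the off-diagonal entry by noting that $f^*$, being induced by a finite morphism, maps $\Nef(X)$ bijectively onto itself so that the extremal ray $\RR_{\geq0}(D-\mu_{\min}(\E)F)$ is preserved. One small improvement on your side: you justify $d>0$ directly by restricting $f^*D$ to a fibre, whereas the paper reaches the same conclusion by claiming that $\det(f^*\vert_{N^1(X)})=\deg(f)$; that identity is only literally true when $n=2$ (in general $\det f^*=\deg(g)\cdot d$ while $\deg(f)=\deg(g)\cdot d^{n-1}$), so your fibre-restriction argument is the cleaner way to see that the orientation of $\Nef(X)$ is preserved.
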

\begin{proof}
It is clear that $F$ is an eigenvector with eigenvalue $\deg(g)=\lambda_1(g)$: since $F$ is a fiber it is of the form $\pi^{-1}(P_0)$ for a point $P_0\in C$ and we have
\[
f^*F=f^*\pi^*P_0=\pi^*g^*P_0=\pi^*(\deg(g)P_0)=\deg(g)F.
\]
Next, let $f^*D=cF+dD$. Notice that with respect to the basis $F,D$ for $N^1(X)$, the matrix for $f^*$ is upper triangular with diagonal entries $\deg(g)$ and $d$. So, the eigenvalues for $(f^p)^*$ are given by $\deg(g)^p$ and $d^p$. Since $\lambda_1(f)=\lim_{p\to\infty}\operatorname{SpecRad}((f^p)^*)^{1/p}$, we see $\lambda_1(f)=\operatorname{SpecRad}(f^*)=\max(\deg(g),d)$. So, we need only show $d=\delta^{1/(n-1)}$, i.e.~that $\deg(f)=d^{n-1}\deg(g)$. Notice that
\[
\begin{split}
\deg(f) &=\deg(f)D^{n-1}F=f_*f^*(FD^{n-1})=f_*(f^*F\cdot (f^*D)^{n-1})\\
&=\deg(g)f_*(F\cdot(cF+dD)^{n-1})=\deg(g)f_*(d^{n-1}FD^{n-1})=d^{n-1}\deg(g).
\end{split}
\]
So, we have now shown that the eigenvalues of $f^*$ are $\lambda_1(g)=\deg(g)$ and $\delta^{1/(n-1)}$, and that $\lambda_1(f)=\max(\lambda_1(g),\delta^{1/(n-1)})$.

Lastly, we must calculate $c$. To do so, we use Lemma \ref{l:nef cone of PE}. Notice that the determinant of the action of $f^*$ on $N^1(X)$ is $\deg(f)>0$ so the action is orientation-preserving. Since $f$ is finite, for all $D'$ we know $D'$ is ample if and only if $f^*D'$ is ample. As a result, the boundary rays of $\Nef(X)$ are each sent to themselves. Thus, the eigenvectors for $f^*$ are given by $F$ and $D-\mu_{\min}(\E)F$. In particular, $d(D-\mu_{\min}(\E)F)=f^*(D-\mu_{\min}(\E)F)=cF+dD-\deg(g)\mu_{\min}(\E)F$, and so
\[
c=(\deg(g)-d)\mu_{\min}(\E),
\]
proving the lemma.
\end{proof}

\begin{proposition}
\label{prop:dichotomy dynamical deg equals base or controlled slope}
One of the following holds: $\lambda_1(f)=\lambda_1(g)$ or $\mu_{\min}(\E)=-\mu(\E)$.
\end{proposition}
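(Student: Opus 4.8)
The plan is to argue by contraposition: assume $\lambda_1(f)\neq\lambda_1(g)$ and deduce $\mu_{\min}(\E)=-\mu(\E)$, the key being that the nef boundary class $N:=D-\mu_{\min}(\E)F$ must then have vanishing top self-intersection. By Lemma~\ref{l:eigenvals of f} we have $\lambda_1(f)=\max(\lambda_1(g),\delta^{1/(n-1)})$ and $\lambda_1(g)=\deg(g)$, so the hypothesis $\lambda_1(f)\neq\lambda_1(g)$ is exactly the statement that $d:=\delta^{1/(n-1)}>\deg(g)$. The same lemma also records that $N$ is an eigenvector of $f^*$ with eigenvalue $d$, i.e.\ $f^*N=dN$, and that $\deg(f)=d^{n-1}\deg(g)$.

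Next I would bring in the finiteness of $f$. Since $f$ is a finite surjective endomorphism of the $n$-dimensional variety $X$, the projection formula (equivalently, $f^*$ acts on top-codimension classes by multiplication by $\deg(f)$) gives $(f^*N)^n=\deg(f)\cdot N^n$. Substituting $f^*N=dN$ and $\deg(f)=d^{n-1}\deg(g)$ yields $d^n N^n=d^{n-1}\deg(g)\,N^n$, hence $d^{n-1}(d-\deg(g))\,N^n=0$. Because $d>\deg(g)\geq 1>0$, the scalar $d^{n-1}(d-\deg(g))$ is nonzero, and therefore $N^n=0$.

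It remains to translate $N^n=0$ into the slope condition. Expanding $N^n=(D-\mu_{\min}(\E)F)^n$ and using the relations $F^2=0$, $D^{n-1}F=1$, $D^n=-c_1(\E)$ from the description of $A^*(X)$, only the first two binomial terms survive, so $N^n=D^n-n\mu_{\min}(\E)D^{n-1}F=-c_1(\E)-n\mu_{\min}(\E)=-n\bigl(\mu(\E)+\mu_{\min}(\E)\bigr)$. Thus $N^n=0$ is precisely $\mu_{\min}(\E)=-\mu(\E)$, which completes the argument. I do not anticipate a real obstacle here, since Lemma~\ref{l:eigenvals of f} has already done the linear-algebra bookkeeping; the one point that needs to be seen is that the eigenvector attached to the eigenvalue $d$ is numerically degenerate exactly when $d$ differs from the fiber eigenvalue $\deg(g)$, and this is where the finiteness of $f$ (through $(f^*N)^n=\deg(f)N^n$) is used.
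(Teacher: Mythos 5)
Your proof is correct. It is essentially the same underlying computation as the paper's, but packaged more cleanly: the paper works directly with $D$ (not an eigenvector), expands $f_*f^*(D^n)$ using the explicit formula $f^*D = cF + dD$, and then substitutes for $c$ to force the factorization $d^n(c_1(\E) + n\mu_{\min}(\E)) = \deg(f)(c_1(\E) + n\mu_{\min}(\E))$, from which the dichotomy drops out. You instead pass immediately to the $d$-eigenvector $N = D - \mu_{\min}(\E)F$ (identified as such in Lemma~\ref{l:eigenvals of f}) and invoke $\deg\bigl((f^*N)^n\bigr) = \deg(f)\,\deg(N^n)$, which gives $\bigl(d^n - \deg(f)\bigr)N^n = 0$ in one line. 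The cases $d^n \neq \deg(f)$ and $d^n = \deg(f)$ are the same two cases the paper finds, but your route makes the geometric meaning transparent: the alternative to $\lambda_1(f) = \lambda_1(g)$ is exactly that the boundary nef eigenvector is numerically degenerate ($N^n = 0$), which translates via the Chow-ring relations into $\mu_{\min}(\E) = -\mu(\E)$. The two arguments use the same ingredients (projection formula, $F^2 = 0$, $D^n = -c_1(\E)$, and Lemma~\ref{l:eigenvals of f}), so there is no new idea, but yours avoids the bookkeeping with $c$ and is the version I would write.
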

\begin{proof}
From Lemma \ref{l:eigenvals of f}, we know $f^*D=cF+dD$ where $c=(\deg(g)-d)\mu_{\min}(\E)$ and $d=\delta^{1/(n-1)}$. Recalling that $D^n=-c_1(\E)$, we have
\[
-\deg(f)c_1(\E) = f_*f^*(D^n)=f_*(cF+dD)^n=ncd^{n-1}-d^nc_1(\E).
\]
Substituting for $c$, we have
\[
-\deg(f)c_1(\E) = n(\deg(g)-d)\mu_{\min}(\E)d^{n-1}-d^nc_1(\E)=n(\deg(f)-d^n)\mu_{\min}(\E)-d^nc_1(\E)
\]
and so
\[
d^n(c_1(\E)+n\mu_{\min}(\E))=\deg(f)(c_1(\E)+n\mu_{\min}(\E)).
\]
Thus, $\mu_{\min}(\E)=-c_1(\E)/n=:-\mu(\E)$ or $d^n=\deg(f)$. This latter equality is equivalent to $d=\deg(g)=\lambda_1(g)$, which by Lemma \ref{l:eigenvals of f}, implies $\lambda_1(f)=\lambda_1(g)$.
\end{proof}

We next need the following basic result concerning the Harder-Narasimhan filtration.

\begin{lemma}
\label{l:min and max slopes in HN filtration}
If $\E$ is a vector bundle which is not semistable, then $\mu_{\max}(\E)>\mu(\E)>\mu_{\min}(\E)$.
\end{lemma}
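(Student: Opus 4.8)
The plan is to unwind the definition of the Harder–Narasimhan filtration and argue that the slope sequence is strictly decreasing. Recall that the HN filtration gives a flag
\[
0 = \E_0 \subsetneq \E_1 \subsetneq \cdots \subsetneq \E_r = \E
\]
with each quotient $\mathcal{Q}_i = \E_i/\E_{i-1}$ semistable and with strictly decreasing slopes $\mu(\mathcal{Q}_1) > \mu(\mathcal{Q}_2) > \cdots > \mu(\mathcal{Q}_r)$; by definition $\mu_{\max}(\E) = \mu(\mathcal{Q}_1)$ and $\mu_{\min}(\E) = \mu(\mathcal{Q}_r)$. If $\E$ is not semistable then $r \geq 2$, so in particular $\mu_{\max}(\E) > \mu_{\min}(\E)$ already. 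The content of the lemma is that the intermediate value $\mu(\E)$ lies strictly between these two extremes.

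The key observation is that the slope of $\E$ is a weighted average of the slopes $\mu(\mathcal{Q}_i)$. Indeed, using additivity of rank and degree in the short exact sequences defining the filtration, $\deg(\E) = \sum_{i=1}^r \deg(\mathcal{Q}_i) = \sum_{i=1}^r \operatorname{rank}(\mathcal{Q}_i)\,\mu(\mathcal{Q}_i)$ and $\operatorname{rank}(\E) = \sum_{i=1}^r \operatorname{rank}(\mathcal{Q}_i)$, so that
\[
\mu(\E) = \frac{\sum_{i=1}^r \operatorname{rank}(\mathcal{Q}_i)\,\mu(\mathcal{Q}_i)}{\sum_{i=1}^r \operatorname{rank}(\mathcal{Q}_i)}.
\]
Since each $\operatorname{rank}(\mathcal{Q}_i)$ is strictly positive, this is a convex combination of the $\mu(\mathcal{Q}_i)$ with all weights nonzero; because $r \geq 2$ and the $\mu(\mathcal{Q}_i)$ are not all equal (they are strictly decreasing), a convex combination with strictly positive weights of a non-constant finite sequence lies strictly between its minimum and maximum. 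Hence $\mu_{\min}(\E) = \mu(\mathcal{Q}_r) < \mu(\E) < \mu(\mathcal{Q}_1) = \mu_{\max}(\E)$, which is the claim.

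There is no serious obstacle here; the only point requiring a word of care is making the weighted-average bound rigorous, namely that $\min_i a_i < \sum w_i a_i < \max_i a_i$ whenever the $w_i$ are positive reals summing to $1$ and the $a_i$ are not all equal. This is elementary: subtracting $\min_i a_i$ from every $a_i$ makes all terms nonnegative with at least one strictly positive and a strictly positive weight on it, forcing the sum to exceed $\min_i a_i$; the upper bound follows by applying the same argument to $-a_i$. I would state this in one sentence and cite the strict monotonicity of the HN slopes as the input that guarantees the $a_i$ are genuinely non-constant.
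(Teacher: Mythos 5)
Your proof is correct, and it takes a genuinely different route from the paper's. You observe that $\mu(\E)$ is a weighted average $\sum_i \frac{\rank(\mathcal{Q}_i)}{\rank(\E)}\,\mu(\mathcal{Q}_i)$ of the slopes of the Harder--Narasimhan graded pieces, with strictly positive weights; since those slopes are strictly decreasing and there are at least two of them, the average falls strictly between the extremes, giving both inequalities simultaneously in a single stroke. The paper instead proves the two inequalities separately: $\mu_{\max}(\E) > \mu(\E)$ is deduced directly from the maximality property of the destabilizing subbundle $\E_1$ (equality would force $\E \subseteq \E_1$, a contradiction), while $\mu(\E) > \mu_{\min}(\E)$ is shown by induction on the length of the filtration, passing to $\E/\E_1$ and invoking the elementary comparison of slopes across a short exact sequence. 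Your weighted-average argument is shorter, treats the two bounds symmetrically, and avoids the induction altogether; the paper's argument stays closer to the defining universal property of the HN filtration and makes less explicit use of the full strict monotonicity of the slope sequence. Both are valid; yours is arguably the cleaner of the two.
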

\begin{proof}
Let
\[
0=\E_0\subsetneq \E_1\subsetneq \dots\subsetneq \E_{\ell-1}\subsetneq \E_\ell = \E
\]
be the Harder-Narasimhan filtration of $\E$, so that $\mu_{\max}(\E)=\mu(\E_1)$ and $\mu_{\min}(\E)=\mu(\E/\E_{\ell-1})$. By construction, $\E_1$ is the maximal destabilizing subbundle of $\E$, i.e.~for all subbundles $0\neq\F\subseteq\E$ we have: (i) $\mu(\E_1)\geq\mu(\F)$ and (ii) if $\mu(\E_1)=\mu(\F)$, then $\F\subseteq\E_1$. So, we see $\mu(\E_1)\geq\mu(\E)$ and we cannot have equality since then we would have $\E=\E_1$ which is not possible as $\E_1$ is semistable and $\E$ is not. We have therefore shown $\mu_{\max}(\E)>\mu(\E)$.

To show $\mu(\E)>\mu_{\min}(\E)$, we induct on $\ell$. We first recall the general result which follows immediately from the definition of slope: if
\[
0\to\F'\to\F\to\F''\to0
\]
is a short exact sequence of non-trivial vector bundles, then $\mu(\F')>\mu(\F)$ if and only if $\mu(\F)>\mu(\F'')$.

Since $\E$ is not semistable, we have $\ell\geq2$. When $\ell=2$ we have a short exact sequence
\[
0\to\E_1\to\E\to\E_2/\E_1\to0
\]
and since we have already shown $\mu(\E_1)>\mu(\E)$, we know $\mu(\E)>\mu(\E_2/\E_1)=\mu_{\min}(\E)$.

Next suppose $\ell\geq3$. Then
\[
0\neq\E_2/\E_1\subsetneq \dots\subsetneq \E_{\ell-1}/\E_1\subsetneq \E/\E_1
\]
is the Harder-Narasimhan filtration of $\E/\E_1$; it has length $\ell-1\geq2$ and so $\E/\E_1$ not semistable. Then by induction, $\mu(\E/\E_1)>\mu_{\min}(\E/\E_1)=\mu(\E/\E_{\ell-1})=\mu_{\min}(\E)$. Since we have shown $\mu(\E_1)>\mu(\E)$, we know $\mu(\E)>\mu(\E/\E_1)$ and so $\mu(\E)>\mu_{\min}(\E)$.
\end{proof}

\begin{corollary}
\label{cor:reduction to case of semistable deg 0}
Let $C$ be a smooth curve. Then the following are equivalent:
\begin{enumerate}
\item Conjecture \ref{conj:main} holds for all surjective endomorphisms of varieties of the form $\PP_C(\E)$
\item Conjecture \ref{conj:main} holds for all surjective endomorphisms of varieties of the form $\PP_C(\E)$ with $\E$ semistable of degree 0.
\end{enumerate}
\end{corollary}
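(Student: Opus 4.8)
The direction $(1)\Rightarrow(2)$ is trivial, so the plan is to prove $(2)\Rightarrow(1)$. Fix a surjective endomorphism $f$ of $X=\PP_C(\E)$. Since Conjecture~\ref{conj:main} and the quantities entering it are unchanged on replacing $f$ by an iterate, Lemma~\ref{l:MFS-descend-endo} lets me assume there is a surjective endomorphism $g$ of $C$ with $\pi\circ f=g\circ\pi$, where $\pi\colon X\to C$ is the bundle projection. By Proposition~\ref{prop:dichotomy dynamical deg equals base or controlled slope}, either $\lambda_1(f)=\lambda_1(g)$ or $\mu_{\min}(\E)=-\mu(\E)$, and I would treat these alternatives separately.

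Suppose first $\lambda_1(f)=\lambda_1(g)$. By Theorem~\ref{dyndegstuff}~(\ref{dyndegstuff::2}), $\lambda_1(f)=\max\{\lambda_1(g),\lambda_1(f\vert_\pi)\}$, so $\lambda_1(f\vert_\pi)\le\lambda_1(g)$. Conjecture~\ref{conj:main} holds for every surjective endomorphism of a smooth projective curve: if $\deg g=1$ then $\lambda_1(g)=1$ and the conjecture is vacuous by Remark~\ref{rmk:lambda1 is 1}; if $\deg g\ge 2$ then Riemann--Hurwitz forces the genus of $C$ to be at most $1$, and the conjecture holds by \cite{silverman-ab-vars} when the genus is $1$ and by the theory of canonical heights on $\PP^1$ when the genus is $0$. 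Theorem~\ref{thm:fibered-over-dim-one-less} then gives Conjecture~\ref{conj:main} for $(X,f)$. (Hypothesis (2) is not used here.)

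Now suppose $\mu_{\min}(\E)=-\mu(\E)$. If $\E$ is semistable then $\mu_{\min}(\E)=\mu(\E)$, whence $\mu(\E)=0$ and $\deg\E=0$, so $\E$ is semistable of degree $0$ and $(X,f)$ is covered directly by hypothesis (2). The case $\mu_{\min}(\E)=-\mu(\E)$ with $\E$ not semistable is the heart of the matter. We may assume we are not in the previous alternative, so $\lambda_1(f)>\lambda_1(g)=\deg g$; by Lemma~\ref{l:min and max slopes in HN filtration} we also have $\mu_{\max}(\E)>\mu(\E)>\mu_{\min}(\E)=-\mu(\E)$, so $\mu(\E)>0$. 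The plan is to pass to a lower-dimensional quotient. By Lemma~\ref{l:nef cone of PE} the class $\Delta:=D-\mu_{\min}(\E)F$ is the second extremal generator of $\Nef(X)$; by Lemma~\ref{l:eigenvals of f} it is an eigenvector of $f^*$, and since $\lambda_1(f)>\deg g$ its eigenvalue is $\lambda_1(f)$; and the Chow-ring relations give $\Delta^{\dim X}=-\dim X\cdot(\mu(\E)+\mu_{\min}(\E))=0$, so $\Delta$ is nef but not big. Assuming $\Delta$ semiample, it defines a fibration $\psi\colon X\to Z$ onto a normal projective $Z$ with $\dim Z<\dim X$; as $f^*\Delta=\lambda_1(f)\Delta$, the map $f$ contracts every fiber of $\psi$, so by the rigidity lemma it descends to a surjective endomorphism $h\colon Z\to Z$ with $\psi\circ f=h\circ\psi$. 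For an ample $H_Z$ on $Z$ with $\psi^*H_Z\equiv m\Delta$ one gets $h^*H_Z\equiv\lambda_1(f)H_Z$, so $\lambda_1(h)=\lambda_1(f)$ and hence $\lambda_1(f\vert_\psi)\le\lambda_1(f)=\lambda_1(h)$; by Theorem~\ref{thm:fibered-over-dim-one-less} it then suffices to prove Conjecture~\ref{conj:main} for $(Z,h)$. I would finish by identifying $Z$ --- after a \(\QQ\)-factorial birational modification handled by Corollary~\ref{cor:conj-birational} --- with a projective bundle $\PP_C(\F)$ over the same curve with $\rank\F<\rank\E$, namely the projectivization of the quotient of $\E$ by the appropriate step of its Harder--Narasimhan filtration, and concluding by induction on $\rank\E$; the base case $\rank\E=1$ is $X=C$, already handled, and in the inductive step the induction hypothesis combined with hypothesis (2) in smaller rank supplies Conjecture~\ref{conj:main} for all surjective endomorphisms of $\PP_C(\E')$ with $\rank\E'<\rank\E$.

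The dynamical-degree bookkeeping (all claims about $\lambda_1(f)$, $\lambda_1(h)$, $\lambda_1(f\vert_\pi)$, $\lambda_1(f\vert_\psi)$) is routine given Theorem~\ref{dyndegstuff}, the product formula, and Lemma~\ref{l:eigenvals of f}, as are the reductions along birational morphisms via Corollary~\ref{cor:conj-birational}. I expect the main obstacle to lie in the non-semistable case: proving that $\Delta$ is semiample, pinning down the contraction $\psi$ well enough to recognize $Z$ (birationally) as a lower-rank projective bundle over $C$ so the induction on $\rank\E$ closes, and verifying that the intervening models are normal and \(\QQ\)-factorial so that Corollary~\ref{cor:conj-birational} applies.
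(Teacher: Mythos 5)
Your handling of the alternative $\lambda_1(f)=\lambda_1(g)$ and of the semistable sub-case of $\mu_{\min}(\E)=-\mu(\E)$ is correct and essentially matches the paper (the paper invokes Lemma~\ref{l:semi-ample-ht} directly rather than going through Theorem~\ref{thm:fibered-over-dim-one-less}, but this is a cosmetic difference), and you are right that hypothesis (2) is not needed in the first alternative.

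The genuine gap is in your treatment of the non-semistable sub-case, and you yourself flag it as the ``heart of the matter.'' The entire machine you set up there --- semiampleness of the nef-but-not-big class $\Delta=D-\mu_{\min}(\E)F$, the resulting contraction $\psi\colon X\to Z$, identification of $Z$ with a smaller projective bundle, $\QQ$-factoriality of the intermediate models, induction on $\rank\E$ --- is speculative and would require several nontrivial facts you do not establish; in particular, semiampleness of a boundary nef class on a projective bundle is not automatic, and nothing in the paper gives it to you. But none of this is necessary: the key observation you missed is that the isomorphism class of $X$ does not determine $\E$, since $\PP_C(\E)\cong\PP_C(\E\otimes\L)$ for any line bundle $\L$ on $C$. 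The paper exploits this to twist $\E$ so that $\mu(\E)<0$ and re-applies the dichotomy: if one is still not in the first alternative, then $\mu_{\min}(\E)=-\mu(\E)$ with $\mu(\E)<0$, and Lemma~\ref{l:min and max slopes in HN filtration} then forces $\E$ to be semistable (a non-semistable bundle would give $\mu(\E)>\mu_{\min}(\E)=-\mu(\E)>0$, contradicting $\mu(\E)<0$), whence $\mu(\E)=\mu_{\min}(\E)=-\mu(\E)=0$ and $\deg\E=0$. This one-line normalization entirely eliminates the non-semistable case and makes the inductive machinery you propose unnecessary.
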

\begin{proof}
By Proposition \ref{prop:dichotomy dynamical deg equals base or controlled slope}, we know $\lambda_1(f)=\lambda_1(g)$ or $\mu_{\min}(\E)=-\mu(\E)$. Suppose $\lambda_1(f)=\lambda_1(g)$ and $P\in X(\QQbar)$ has dense orbit under $f$. Then $\pi(P)$ has dense orbit under $g$, so $\alpha_g(\pi(P))=\lambda_1(g)$ since the conjecture is known for curves. Then Lemma \ref{l:semi-ample-ht} shows $\alpha_f(P)\geq\alpha_g(\pi(P))=\lambda_1(g)=\lambda_1(f)$, and hence $\alpha_f(P)=\lambda_1(f)$ by Remark \ref{rmk:lambda1 is 1}.

We next turn to the case where $\mu_{\min}(\E)=-\mu(\E)$. Since $X=\PP(\E\otimes\L)$ for any line bundle $\L$, choosing $\L$ with sufficiently negative degree, we can assume $\mu(\E)<0$. If $\E$ is not semistable, then by Lemma \ref{l:min and max slopes in HN filtration} we have $\mu(\E)>\mu_{\min}(\E)=-\mu(\E)$ which is a contradiction. So, $\E$ must be semistable, in which case $\mu(\E)=\mu_{\min}(\E)=-\mu(\E)$, so $\mu(\E)=0$, i.e.~$\deg\E=0$.
\end{proof}

We are now ready to prove Conjecture \ref{conj:main} in the case where $C=\PP^1$, i.e.~the case of rational normal scrolls.

\begin{proof}[Proof of Theorem \ref{thm:intro-MFS-partial} (\ref{intro-MFS-partial::ratnormalscrolls})]
By Corollary \ref{cor:reduction to case of semistable deg 0}, we need only prove the conjecture for semistable degree 0 vector bundles on $\PP^1$. Such vector bundles are all trivial, so $X=\PP^1\times\PP^{n-1}$ in which case the conjecture holds by \cite[Theorem 1.3]{sano-prod-endo}.
\end{proof}

\singlespacing
%\bibliographystyle{amsplain}
%\bibliography{refs}

\end{document}